\title{Randomized Submanifold Subgradient Method for Optimization Over the Stiefel Manifold
}
\author{Andy Yat-ming Cheung\thanks{Department of Applied Mathematics, The Hong Kong Polytechnic University
  (\texttt{andy.cheung@polyu.edu.hk}); Department of Systems Engineering and Engineering Management, The Chinese University of Hong Kong (\texttt{andycheungym.maths@gmail.com}).}
\and Jinxin Wang\thanks{Booth School of Business, The University of Chicago
  (\texttt{jinxin.wang@chicagobooth.edu}).}
\and Man-Chung Yue\thanks{Department of Data and Systems Engineering and Musketeers Foundation Institute of Data Science, The University of Hong Kong
 (\texttt{mcyue@hku.hk}).}
\and Anthony~Man-Cho So\thanks{Department of Systems Engineering and Engineering Management, The Chinese University of Hong Kong 
  (\texttt{manchoso@se.cuhk.edu.hk}).}
}
\newcommand{\dist}{\mathrm{dist}}
\newcommand{\ie}{{\it i.e.}}
\DeclareMathOperator*{\argmin}{argmin}
\def\R{\mathbb{R}}
\def\E{\mathbb{E}}
\def\eps{\varepsilon}
\def\pd{\partial}
\def\pd{\partial}
\def\Sym#1{\mathop{\mathrm{Sym}}(#1)}
\def\Skew#1{\mathop{\mathrm{Skew}}(#1)}
\def\Orth#1{\mathrm{O}(#1)}
\def\Stiefel#1#2{\mathrm{St}(#1,#2)}
\def\argmin{\mathop{\mathrm{argmin}}}
\newtheorem{theorem}{Theorem}[section]
\newtheorem{definition}[theorem]{Definition}
\newtheorem{lemma}[theorem]{Lemma}
\newtheorem{proposition}[theorem]{Proposition}
\newtheorem{remark}[theorem]{Remark}
\def\Sym#1{\mathop{\mathrm{Sym}}(#1)}
\def\Skew#1{\mathop{\mathrm{Skew}}(#1)}
\def\Orth#1{\mathrm{O}(#1)}
\def\Stiefel#1#2{\mathrm{St}(#1,#2)}
\begin{document}

\maketitle

\begin{abstract}
Optimization over the Stiefel manifold is a fundamental computational problem in many scientific and engineering applications. Despite considerable research effort, high-dimensional optimization problems over the Stiefel manifold remain challenging, particularly when the objective function is nonsmooth. In this paper, we propose a novel coordinate-type algorithm, named \emph{randomized submanifold subgradient method} (RSSM), for minimizing a possibly nonsmooth weakly convex function over the Stiefel manifold and study its convergence behavior.
Similar to coordinate-type algorithms in the Euclidean setting, RSSM exhibits low per-iteration cost and is suitable for high-dimensional problems.
We prove that RSSM has an iteration complexity of $\mathcal O(\eps^{-4})$ for driving a natural stationarity measure below $\eps$, both in expectation and in almost-sure senses. 
To the best of our knowledge, this is the first convergence guarantee for coordinate-type algorithms for nonsmooth optimization over the Stiefel manifold. To establish the said guarantee, we develop two new theoretical tools, namely a Riemannian subgradient inequality for weakly convex functions on proximally smooth matrix manifolds and an averaging operator that induces an adaptive metric on the ambient Euclidean space, which could be of independent interest. Lastly, we present numerical results on robust subspace recovery and orthogonal dictionary learning to demonstrate the viability of our proposed method.
\end{abstract}



\section{Introduction}
This paper focuses on the optimization problem
\begin{eqnarray} \label{mainpb}
\begin{array}{cl}
\min & f (X)\\
\text{s.t.} & X \in  \Stiefel {n} {p} ,
\end{array}
\end{eqnarray}
where $n,p\ge1$ are positive integers with $n \ge p$, $\Stiefel n p= \{ X \in \R^{n\times p}: X^\top X = I\} $ is the Stiefel manifold, and $f : \R^{n \times p} \to \R$ is a function that is $\tau$-weakly convex on some bounded convex neighborhood $\varOmega \subseteq \mathbb{R}^{n \times p}$ containing the Stiefel manifold $\Stiefel n p$ (i.e., the function $f(\cdot) + \tfrac{\tau}{2}\| \cdot \|^2$ is convex on $\varOmega$) for some $\tau \in \R$. Note that $f$ is not necessarily smooth.  Throughout the paper, unless otherwise specified, we assume that $\tau \ge 0$.

Problem~\eqref{mainpb} has attracted much attention from both optimization and machine learning communities due to its broad range of applications, including the Procrustes problem~\cite{gower2004procrustes, sato2013riemannian}, the joint diagonalization problem~\cite{theis2009soft}, Kohn-Sham total energy minimization~\cite{liu2015analysis}, robust subspace recovery~\cite{zhu2018dual}, fair PCA~\cite{vu2022distributionally}, and orthogonal dictionary learning~\cite{li2021weakly}. For recent algorithmic developments and applications of optimization over the Stiefel manifold, we refer the reader to~\cite{absil2009optimization, boumal2023introduction, chen2024nonsmooth} and the references therein.

In modern applications, the dimension $n$ or $p$ of the Stiefel manifold could be very large. For example, as pointed out in \cite{li2019orthogonal}, deep neural networks attain an optimal generalization error when the weight matrix has orthonormal columns or rows. Constraining the columns or rows of the weight matrix to be orthonormal in the training of deep neural networks naturally gives rise to high-dimensional optimization problems over the Stiefel manifold. Unfortunately, despite considerable research effort, existing algorithms for optimization over the Stiefel manifold do not scale well and are only suitable for small- to medium-scale problems. When the problem has a nonsmooth objective function, it becomes even more challenging. The main goal of this paper is to address these challenges by developing an efficient algorithm with convergence guarantees for nonsmooth optimization over a high-dimensional Stiefel manifold.

In the Euclidean setting, coordinate-type algorithms constitute a classical approach to tackling high-dimensional optimization problems and have shown promising performance in many applications~\cite{nesterov2012efficiency,wright2015coordinate}. 
A natural idea for achieving our goal is therefore to extend coordinate-type algorithms to the manifold setting. This idea has been explored in a number of prior works, though they focus exclusively on smooth objective functions. Roughly speaking, the approaches in existing works can be divided into three groups. 

The first applies to manifolds that have a simple separable structure (e.g., a product manifold). Such a structure allows the development of coordinate-type algorithms that use simple operations to maintain the feasibility of the iterates; see, e.g., \cite{huang2021projection,huang2021riemannian,peng2023block,tian2021distributed}. However, as the Stiefel manifold does not have an obvious separable structure, it is difficult to directly apply the approaches in this group to tackle optimization problems over the Stiefel manifold. 


The second involves performing coordinate decomposition in the tangent spaces to the manifold. In \cite{gutman2022coordinate}, a general framework, called tangent subspace descent, is proposed for developing coordinate-type algorithms for manifold optimization. At each iteration, the tangent space to the manifold at the current iterate is decomposed into low-dimensional subspaces, and the next iterate is obtained by updating along a chosen subspace using the exponential map. The crux of this framework is the choice of the low-dimensional subspaces of the tangent space. As an instantiation of the framework, the work \cite{gutman2022coordinate} presents a coordinate-type algorithm for optimization over the Stiefel manifold, which modifies at most two columns per iteration. Such a column-wise update scheme can be seen as a generalization of the algorithms in \cite{jiang2022givens,massart2022coordinate,shalit2014coordinate} for optimization over the orthogonal manifold, which are derived based on Givens rotations. To further reduce the per-iteration computational cost, a row-wise update scheme is devised in~\cite{han2024riemannian} by another choice of tangent subspaces. The approach in \cite{han2024riemannian} is also suitable for other manifolds, including the Grassmannian manifold. However, all these methods either rely on the exponential map or modify at most two rows or columns. It is unclear whether coordinate-type algorithms that update more columns or rows at each iteration and/or use a low-cost retraction instead of an exponential map can be developed. Using a similar idea as in~\cite{gutman2022coordinate} with a Cholesky factorization-friendly basis for the tangent space, the work \cite{darmwal2023low} proposes a coordinate-type algorithm for optimization over the manifold of positive definite matrices. Although each iteration of this algorithm can update more than two columns or rows and avoid matrix exponentiation (which is required for the exponential map of the manifold of positive definite matrices), the efficiency of the update is guaranteed only when the objective function is smooth and takes a certain composite form (see~\cite[Eq.(2)]{darmwal2023low}).

The third consists of penalty approaches, which transform the original manifold optimization problem into an unconstrained one by introducing a penalty term for constraint violation in the objective function. The work~\cite{gao2019parallelizable} studies a penalty approach and develops coordinate-type algorithms for optimization over the Stiefel manifold. Nevertheless, the approach in~\cite{gao2019parallelizable} leads to infeasible-point algorithms, which might not be desirable in some applications. 


\begin{table}[t!]
\centering
\setlength\tabcolsep{3pt}
	\caption{Coordinate-type methods for optimization over the Stiefel manifold.}
    \resizebox{0.97\textwidth}{!}{
	\begin{tabular}{cccccc}
				\toprule
				{\bf Methods } & {\bf Coordinate}& {\bf Objective} 
   &  {\bf Update} %
   & {\bf Feasible} 
   &  {\bf Retraction} \\
				\midrule
				\makecell[c]{\cite{gutman2022coordinate} \\ (TSD)} & intrinsic & \makecell[c]{Lipschitz \\Riemannian gradient} 
   & \makecell[c]{modify \\ at most \\ two columns}  %
   & \CheckmarkBold  
   & \makecell[c]{exponential \\ map} \\
				\midrule
    \makecell[c]{\cite{han2024riemannian}} & intrinsic & \makecell[c]{Lipschitz \\Riemannian gradient} 
   & \makecell[c]{modify \\ two rows} %
   & \CheckmarkBold  
   & \makecell[c]{exponential \\ map} \\
   \midrule
      \makecell[c]{\cite{gao2019parallelizable}\\ (PCAL)} & ambient & \makecell[c]{$C^2$ with bounded \\ Euclidean Hessian} & \makecell[c]{modify \\ multiple \\ columns} %
   & \XSolidBrush 
   & N/A
   \\
\midrule
   \makecell[c]{\cite{hanefficient}\\ (RSDM)} & intrinsic & \makecell[c]{bounded Euclidean \\ gradient and Hessian}  & \makecell[c]{left\\ orthogonal \\ action\tablefootnote{Equivalent to the addition of a low rank matrix; see~\cite[Section~4]{hanefficient}.}} %
   & \CheckmarkBold 
   & retraction
   \\ 
   \midrule\rowcolor{blue!5}\Gape[0pt][2pt]{\makecell[c]{Our work\\ (RSSM)}}~&~ambient & \Gape[0pt][2pt]{\makecell[c]{weakly convex,\\ nonsmooth}}
   & \Gape[0pt][2pt]{\makecell[c]{modify \\ multiple \\ columns}}
   & \Gape[0pt][2pt]{\CheckmarkBold}
   & \Gape[0pt][2pt]{\makecell[c]{polar decom-\\ position \eqref{eq-polarretr}}} \\
   
				\bottomrule
	\end{tabular}
    }
\label{tab:relatedworks}
\end{table}

The above discussion motivates the search for a coordinate-type algorithm that can tackle nonsmooth optimization problems over the Stiefel manifold and possesses more favorable properties than those in the aforementioned works. Our contributions can be summarized as follows.
\begin{itemize}
    \item We devise a new coordinate-type algorithm, called the \emph{randomized submanifold subgradient method} (RSSM), for minimizing a possibly nonsmooth weakly convex function over the Stiefel manifold (i.e., Problem~\eqref{mainpb}). A major novelty of RSSM lies in the way the ``coordinate blocks'' are defined: Instead of performing coordinate updates with respect to the intrinsic coordinates by decomposing the tangent spaces to the manifold as in~\cite{gutman2022coordinate,han2024riemannian}, RSSM performs coordinate updates with respect to the ambient Euclidean coordinates by decomposing the Stiefel manifold into submanifold blocks and taking a retracted partial Riemannian subgradient step along a randomly chosen submanifold block.

    \item We show that the said submanifold blocks are $1$-proximally smooth and that the corresponding projections can be computed efficiently in closed form. RSSM therefore exhibits a low per-iteration computational cost and is particularly suitable for high-dimensional instances of Problem~\eqref{mainpb}.
    
    \item We prove that RSSM has an iteration complexity of $\mathcal O(\eps^{-4})$ for driving a natural stationarity measure of Problem~\eqref{mainpb} below $\eps>0$, both in expectation and in almost-sure senses.

    \item To facilitate our analysis, we develop two theoretical tools. The first is a new Riemannian subgradient inequality for weakly convex functions on proximally smooth matrix manifolds, which could be of independent interest in other matrix optimization studies. The second is a novel theoretical construct called the \emph{averaging operator}, which is a positive definite operator on $\R^{n\times p}$ and permeates the proofs via the adaptive  metric it induces on the ambient Euclidean space. Remarkably, these two tools together allow us to prove the sufficient decrease of our proposed method.
\end{itemize}

Before leaving this section, let us mention the recent paper~\cite{hanefficient}, which presents another submanifold-based coordinate-type algorithm called RSDM for optimization over the Stiefel manifold and appears around the same time as the preliminary version of this paper~\cite{cheung2024randomized}. Unlike RSSM, which operates on submanifolds that are defined in the ambient Euclidean coordinates and can handle nonsmooth objective functions, RSDM operates on submanifolds that are defined in intrinsic coordinates and applies only to smooth objective functions. For a comparison of various coordinate-type algorithms for optimization over the Stiefel manifold, see Table~\ref{tab:relatedworks}.

\section{Notation and Preliminaries} \label{sect-prelim}

Let $\xi, \eta\in \R^{n\times p}$ be arbitrary. The \emph{Frobenius inner product} between $\xi$ and $\eta$ is denoted by $\langle\,\xi, \eta\,\rangle = \mathrm{tr}(\xi^\top \eta)$, while the \emph{Frobenius norm} of $\xi$ is denoted by $\|\xi\| = \sqrt{\langle \xi, \xi\rangle}$. The \emph{operator norm} and \emph{nuclear norm} of $\xi$ are denoted by $\|\xi\|_{\mathsf{op}} = \sup_{v\in \R ^p,\, \|v\|=1} \|\xi v\|$ and $\|\xi\|_\ast = \mathrm{tr}((\xi^\top \xi)^{1/2})$, respectively.
Given a self-adjoint positive definite linear operator $\mathcal D : \R^{n \times p} \to \R^{n \times p}$, the \emph{Mahalanobis inner product} between $\xi$ and $\eta$ is denoted by $\langle \xi, \eta\rangle_{\mathcal D} = \langle \mathcal D(\xi), \eta\rangle$, while the \emph{Mahalanobis norm} of $\xi$ is denoted by $\|\xi\|_{\mathcal D} = \sqrt{\langle \mathcal D(\xi), \xi\rangle}$. 
When $n=p$, the \emph{symmetric part} and \emph{skew-symmetric part} of $\xi$ are denoted by $\Sym{\xi} = \tfrac{1}{2}(\xi+\xi^\top)$ and $\Skew{\xi}=\tfrac{1}{2}(\xi-\xi^\top)$, respectively.
The symbols $I$, $J$, and $0$ represent the identity, all-one, and zero matrices, respectively. For a subset $C \subseteq [p] = \{1, \ldots, p\}$ and a matrix $X \in \R^{n \times p}$, the cardinality of $C$ is denoted by $|C|$, and the submatrix obtained by extracting all columns of $X$ corresponding to the indices in $C$ is denoted by $X_C \in \R^{n \times |C|}$. For $\ell \geq 2$, the collection of $2$-sets or unordered pairs in $[\ell]$ is denoted by $\binom{[\ell]}{2} = \left\{\,\{i,j\} : i, j \in [\ell]\,\text{and}\, i \neq j\,\right\}$. Given a closed subset $\mathcal S \subseteq \R^{n \times p}$ and a point $X \in \R^{n \times p}$, the \emph{distance} from $X\in \R^{n\times p}$ to $\mathcal S$ is denoted by $\dist(X, \mathcal S) = \inf_{Y \in \mathcal S} \|X-Y\|$, while the projection of $X$ onto $\mathcal{S}$ is denoted by $\mathcal P_{\mathcal S}(X) = \{ Y \in \mathcal{S} : \|X-Y\| = \dist(X,\mathcal{S}) \}$.

\subsection{Weakly Convex Optimization on Stiefel Manifold} \label{subsect-weakcvx}

Given a $\tau$-weakly convex function $h:\R^{n\times p} \rightarrow \R$, which is necessarily Clarke regular~\cite[Proposition 4.5]{vial1983strong}, we have $h(\cdot) = g(\cdot) - \tfrac{\tau}{2}\|\cdot\|^2$ for some convex function $g:\R^{n \times p} \rightarrow \R$. The Clarke subdifferential of $h$ at $X \in \R^{n \times p}$ is given by $\pd h(X) = \pd g(X) - \tau X$, where $\pd g(X)$ is the usual convex subdifferential of $g$ at $X \in \R^{n \times p}$; see \cite[Proposition 4.6]{vial1983strong}. Moreover, we have
\begin{eqnarray} \label{eq-weakcvx}
h(Y) \geq h(X) + \langle\,\widetilde \nabla h(X), Y-X\,\rangle - \tfrac{\tau}{2}\|Y - X\|^2
\end{eqnarray}
for all $X, Y \in \R^{n \times p}$ and $\widetilde \nabla h(X) \in \pd h(X)$; see~\cite[Proposition 4.8]{vial1983strong}.

Let $\mathcal M \subseteq \R^{n \times p}$ be a smooth matrix manifold and $T_X \mathcal M$ be the tangent space to $\mathcal{M}$ at a point $X \in \mathcal M$. For a weakly convex (and hence Clarke regular) function $h$ in the ambient Euclidean space $\R^{n \times p}$, the Riemannian (Clarke) subdifferential of $h$ at $X \in \mathcal M$ is given by 
\begin{eqnarray} \label{eq-Rsubdiff}
\pd_{\mathcal M} h(X) = \mathcal P_{T_X \mathcal M} (\pd h(X)) = \{\,\mathcal P_{T_X \mathcal M}(\widetilde \nabla h(X))\,:\,\widetilde \nabla h(X) \in \pd h(X)\,\};
\end{eqnarray}
see \cite[Theorem 5.1]{yang2014optimality}. 
We call an $X \in \mathcal M$ that satisfies $0 \in \pd_{\mathcal M} h(X)$ a \emph{stationary point} of $h$ over $\mathcal M$. 
Given an Euclidean subgradient $\widetilde \nabla h(X) \in \pd h(X)$ of $h$ at $X \in \mathcal{M}$, the corresponding Riemannian subgradient is denoted by $\widetilde \nabla_{\mathcal M} h(X) = \mathcal P_{T_X \mathcal M}(\widetilde \nabla h(X))$. 
In particular, for the Stiefel manifold, it follows from~\cite[Example 3.6.2]{absil2009optimization} that $T_X \Stiefel n p = \{\,\xi \in \R^{n \times p}\,:\,\xi^\top X + X^\top \xi = 0\,\}$ and $\mathcal P_{T_X \Stiefel n p}(\xi) = \xi - X \mathrm{sym}(X^\top \xi)$.

Many iterative algorithms for optimization over the manifold $\mathcal M$ update its iterate $X$ by first moving on the tangent space $T_X \mathcal M$ along a given direction and then mapping the resulting point back onto the manifold. The latter can be achieved using the exponential map (see, e.g.,~\cite[Section 5.4]{absil2009optimization}), though it can be numerically challenging to compute. This motivates the use of a \emph{retraction}, which is a locally smooth map from the tangent bundle of $\mathcal{M}$ to $\mathcal M$ that approximates the exponential map up to the first order; see \cite[Definition 1]{absil2012projection}. One important example of a retraction on $\mathcal{M}$ is the \emph{projective retraction}, which is given by $(X,\xi) \mapsto \mathcal{P}_{\mathcal{M}}(X+\xi)$ for $X \in \mathcal{M}$ and $\xi \in T_X \mathcal{M}$~\cite[Proposition 5]{absil2012projection}. For the Stiefel manifold $\Stiefel{n}{p}$, we focus on the polar decomposition-based retraction
\begin{eqnarray} \label{eq-polarretr}
\mathrm{Retr}_X(\xi) = (X + \xi)(I + \xi^\top \xi)^{-1/2}
\end{eqnarray}
for $X \in \Stiefel n p$ and $\xi \in T_X \Stiefel n p$. It is known that the polar decomposition-based retraction~\eqref{eq-polarretr} coincides with the projective retraction on $\Stiefel{n}{p}$, and the projection $\mathcal{P}_{\Stiefel{n}{p}}$ satisfies the Lipschitz-like property
\begin{eqnarray} \label{eq-lipslike}
\|\,\mathrm{Retr}_X(\xi) - Y\,\| = \|\,\mathcal P_{\Stiefel n p}(X + \xi) - \mathcal P_{\Stiefel n p}(Y)\,\| \leq \|\,X+\xi - Y\,\|
\end{eqnarray}
for all $X, Y \in \Stiefel n p$ and $\xi \in T_X \Stiefel n p$; see~\cite[Lemma 1]{li2021weakly}. Moreover, the map $\xi \mapsto \mathcal P_{\Stiefel n p}(X+\xi)$ satisfies the second-order boundedness condition
\begin{align} \label{eq-secondbdd}
    \|\,\mathcal P_{\Stiefel n p}(X + \xi) - X - \xi\,\| \leq \|\xi\|^2
\end{align}
for all $X \in \Stiefel n p$ and $\xi \in T_X \Stiefel n p$ with $\|\xi\| \leq 1$; see \cite[Appendix E.1]{liu2019quadratic}.

\subsection{Proximal Smoothness}  \label{subsect-proxsmooth} 

The following notion will be crucial as we deal with submanifolds of the Stiefel manifold in our subsequent development.
\begin{definition}[{\cite[Theorems 4.1 and 4.11]{clarke1995proximal}}]
    A closed set $\mathcal S \subseteq \R^{n\times p}$ is $R$-proximally smooth if the projection $\mathcal P_{\mathcal S}(X)$ is a singleton whenever $\dist(X, \mathcal S) < R$.
\end{definition}

It is known that the Stiefel manifold $\Stiefel n p$ is $1$-proximally smooth for any integers $n,p \ge 1$ with $n \ge p$; see, e.g.,~\cite[Proposition 7]{absil2012projection}.
Moreover, a smooth manifold $\mathcal M$ is $R$-proximally smooth if and only if 
\begin{eqnarray} \label{eq-proxsmooth}
2 R \, \langle\,\zeta, X' - X\,\rangle \leq \|\zeta\| \cdot \|X' - X\|^2
\end{eqnarray}
for all $X, X' \in \mathcal M$ and $\zeta \in N_X{\mathcal M}$, where $N_X \mathcal M$ denotes the \emph{normal space} to $\mathcal M$ at $X$. This can be deduced, e.g., by combining the results in~\cite[Theorem 2.2, Proposition 2.3]{adly2016preservation} and~\cite[Example 6.8]{rockafellar2009variational}.



\section{Randomized Submanifold Subgradient Method} \label{sect-RSSM}

In this section, we introduce RSSM, a coordinate-type method for solving Problem \eqref{mainpb}. Central to the development of RSSM is the notion of a \emph{submanifold block}, which is a submanifold of the Stiefel manifold induced by a collection of orthonormal columns.
Each iteration of RSSM performs a projected Riemannian subgradient step on the submanifold block induced by randomly selected columns of the current iterate. As we shall see, submanifold blocks enjoy many desirable properties, which facilitate the easy computation of the projected Riemannian subgradient step.

\subsection{Submanifold Block} \label{subsect-submfdblk}

To develop a coordinate-type algorithm for optimization over the Stiefel manifold, an intuitive approach is to update a subset of columns at each iteration. One fundamental challenge of such an approach is to maintain the feasibility of an iterate after updating the columns. This can be tackled by restricting the update to the subspace that is orthogonal to the span of the unaltered columns. We are thus led to the following definition.

\begin{definition}[Submanifold Block] \label{def-submfdblk}
Let $C \subseteq [p]$ be fixed. The \emph{submanifold block at $X \in \Stiefel n p$ with respect to $C$} is defined as
\begin{align} \label{eq-submfdblk}
    \mathcal M_{X_{[p]\setminus C}} = \{\,Y \in \Stiefel n {|C|}\,:\,X_{[p]\setminus C}^\top Y = 0\,\}.
\end{align}
\end{definition}
By definition, the submanifold block $\mathcal M_{X_{[p]\setminus C}}$ can be written as the intersection $\mathcal M_{X_{[p]\setminus C}} = \Stiefel n {|C|}\cap \{Y \in \R^{n \times |C|}\,:\,X_{[p]\setminus C}^\top Y = 0\}$. The intersection is transversal, since $T_{Z} \Stiefel n {|C|} + T_{Z} \{Y \in \R^{n \times |C|}\,:\,X_{[p]\setminus C}^\top Y = 0\} = \R^{n \times |C|}$ for any $Z \in \mathcal M_{X_{[p]\setminus C}}$. Therefore, by \cite[p.30]{guillemin2010differential}, the submanifold block $\mathcal M_{X_{[p]\setminus C}}$ is itself a smooth submanifold of the Stiefel manifold $\Stiefel{n}{|C|}$.
The tangent space to $\mathcal M_{X_{[p]\setminus C}}$ at $X_C \in \mathcal M_{X_{[p]\setminus C}}$ is 
\begin{align} \label{eq-TSsubmfdblk}
    T_{X_C}\mathcal M_{X_{[p]\setminus C}} = \{\,\xi \in T_{X_C}\Stiefel n {|C|}\,:\,X_{[p]\setminus C}^\top \xi = 0\,\}.
\end{align}

As mentioned above, our proposed RSSM performs a projected Riemannian subgradient step on a submanifold block at each iteration. The following lemma guarantees that the projection is well-defined, despite the non-convexity of the submanifold block, and provides a closed-form formula for the projection. In particular, when the search direction lies in the tangent space to the submanifold block, the projection reduces to the projection onto a lower-dimensional Stiefel manifold, which is readily computable via~\eqref{eq-polarretr}.

\begin{lemma}[Proximal Smoothness and Projection onto Submanifold Block] \label{lem-projsubmfdblk}
For any $C \subseteq [p]$ and $X \in \Stiefel n p$, the submanifold block $\mathcal M_{X_{[p]\setminus C}}$ is $1$-proximally smooth. In particular, for any $\Xi \in \R^{n \times |C|}$ with $\dist(\Xi, \mathcal M_{X_{[p]\setminus C}}) < 1$, we have 
    \begin{align*}
        \mathcal P_{\mathcal M_{X_{[p]\setminus C}}}(\Xi) &= \mathcal P_{\Stiefel n {|C|}}\left(\,\left(I - X_{[p]\setminus C}X_{[p]\setminus C}^\top \right)\Xi\,\right)\\ 
        &= \left(I - X_{[p]\setminus C}X_{[p]\setminus C}^\top \right)\Xi \left[\,\Xi^\top \left(I - X_{[p]\setminus C}X_{[p]\setminus C}^\top \right)\Xi\,\right]^{-\frac{1}{2}}.
    \end{align*}
    If in addition $X_{[p]\setminus C}^\top \Xi = 0$, then $\mathcal P_{\mathcal M_{X_{[p]\setminus C}}}(\Xi) = \mathcal P_{\Stiefel n {|C|}}(\Xi)$.
\end{lemma}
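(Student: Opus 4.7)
My plan is to reduce the lemma to the already-established $1$-proximal smoothness of a full Stiefel manifold via a single Pythagorean-type decomposition. Set $U := X_{[p]\setminus C}$ and $\Pi := I - UU^\top$, the orthogonal projector onto $\mathrm{range}(U)^\perp$. The first step is to establish the identity
\begin{equation*}
\|\Xi - Y\|^2 \;=\; \|\Pi\Xi - Y\|^2 + \|UU^\top \Xi\|^2 \qquad \text{for every } Y \in \mathcal M_{X_{[p]\setminus C}}.
\end{equation*}
This is a straightforward expansion once one observes that the cross term $\langle \Pi\Xi - Y,\, UU^\top \Xi\rangle$ vanishes: $\Pi U = 0$ (since $U^\top U = I$) annihilates the $\Pi\Xi$ contribution, and $U^\top Y = 0$ by the defining constraint of the submanifold block annihilates the $Y$ contribution.

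With this identity in hand, minimizing $\|\Xi - Y\|$ over $\mathcal M_{X_{[p]\setminus C}}$ is equivalent, up to the constant $\|UU^\top \Xi\|^2$, to minimizing $\|\Pi\Xi - Y\|$ over the same set. The next step is to show that $\mathcal P_{\Stiefel n {|C|}}(\Pi\Xi)$---the projection of $\Pi\Xi$ onto the \emph{full} Stiefel manifold---already lies in the submanifold block. Taking a thin SVD $\Pi\Xi = \tilde U \Sigma \tilde V^\top$, the polar projection is $\tilde U \tilde V^\top$; the column space of $\Pi\Xi$ lies inside $\mathrm{range}(\Pi) = \mathrm{range}(U)^\perp$, so $U^\top \tilde U = 0$, whence $U^\top(\tilde U \tilde V^\top) = 0$. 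Because $\mathcal M_{X_{[p]\setminus C}} \subseteq \Stiefel n {|C|}$, the bound $\|\Pi\Xi - Y\| \ge \dist(\Pi\Xi, \Stiefel n {|C|})$ for every $Y \in \mathcal M_{X_{[p]\setminus C}}$ is attained at this candidate, which therefore realizes the minimum on $\mathcal M_{X_{[p]\setminus C}}$.

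For the $1$-proximal smoothness and the uniqueness assertion, I apply the Pythagorean identity once more to get $\dist(\Xi, \mathcal M_{X_{[p]\setminus C}})^2 \ge \dist(\Pi\Xi, \Stiefel n {|C|})^2$, so the assumption $\dist(\Xi, \mathcal M_{X_{[p]\setminus C}}) < 1$ forces $\dist(\Pi\Xi, \Stiefel n {|C|}) < 1$. Invoking the $1$-proximal smoothness of $\Stiefel n {|C|}$ (recalled in Section~\ref{subsect-proxsmooth}) delivers a singleton projection onto $\Stiefel n {|C|}$, which then pins down the projection onto $\mathcal M_{X_{[p]\setminus C}}$ uniquely. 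Since $\dist(\Pi\Xi,\Stiefel n {|C|}) < 1$ implies all singular values of $\Pi\Xi$ lie in $(0,2)$, the matrix $\Pi\Xi$ has full column rank, and the closed-form expression then follows from the standard polar-factor identity $\mathcal P_{\Stiefel n {|C|}}(A) = A(A^\top A)^{-1/2}$ applied to $A = \Pi\Xi$ together with $\Pi^2 = \Pi$. The final assertion is immediate: $X_{[p]\setminus C}^\top \Xi = 0$ gives $\Pi \Xi = \Xi$, collapsing the formula.

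The only real subtlety is the SVD step: the orthogonality of the column space of $\Pi\Xi$ with $\mathrm{range}(U)$ is what lets the polar projection onto $\Stiefel n {|C|}$ automatically land inside the smaller set $\mathcal M_{X_{[p]\setminus C}}$. Everything else is bookkeeping against the already-known proximal smoothness of the ambient Stiefel manifold.
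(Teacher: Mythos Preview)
Your proposal is correct and follows essentially the same approach as the paper: the Pythagorean identity $\|\Xi - Y\|^2 = \|\Pi\Xi - Y\|^2 + \|UU^\top\Xi\|^2$ reduces the problem to projecting $\Pi\Xi$ onto $\Stiefel{n}{|C|}$, one checks that this projection automatically lies in the submanifold block, and the distance bound $\dist(\Pi\Xi,\Stiefel{n}{|C|}) \le \dist(\Xi,\mathcal M_{X_{[p]\setminus C}}) < 1$ feeds the $1$-proximal smoothness of the ambient Stiefel manifold to get uniqueness and the closed-form polar expression. The only cosmetic difference is that you use an SVD to verify the projection lands in $\mathcal M_{X_{[p]\setminus C}}$, whereas the paper reads this off directly from the explicit formula $\Pi\Xi\,[\Xi^\top\Pi\Xi]^{-1/2}$.
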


\begin{proof}
	Let $\Xi \in \R^{n \times |C|}$ be such that $\dist(\Xi, \mathcal M_{X_{[p]\setminus C}}) < 1$. Consider the decomposition $\Xi = X_{[p]\setminus C} X_{[p]\setminus C}^\top \Xi + (I - X_{[p]\setminus C}X_{[p]\setminus C}^\top) \Xi$. For any $U \in \mathcal M_{X_{[p]\setminus C}}$, we have
	\begin{align*}
		\|\Xi - U\|^2 &= \left\|X_{[p]\setminus C} X_{[p]\setminus C}^\top \Xi + (I - X_{[p]\setminus C}X_{[p]\setminus C}^\top) \Xi - U\right\|^2\\
		&= \left\|X_{[p]\setminus C} X_{[p]\setminus C}^\top \Xi\right\|^2 + \left\|(I - X_{[p]\setminus C}X_{[p]\setminus C}^\top) \Xi - U\right\|^2.
	\end{align*}
    Since $\dist(\Xi,\Stiefel n {|C|}) \leq \dist(\Xi, \mathcal M_{X_{[p]\setminus C}}) < 1$ and $\Stiefel n {|C|}$ is $1$-proximally smooth, we obtain
    \begin{align*}	
        &\argmin_{U \in \Stiefel n {|C|}} \left\|(I - X_{[p]\setminus C}X_{[p]\setminus C}^\top)\Xi - U\right\|^2 = \mathcal P_{\Stiefel n {|C|}}\left(\,(I - X_{[p]\setminus C}X_{[p]\setminus C}^\top)\Xi\,\right) \\
        &= (I - X_{[p]\setminus C}X_{[p]\setminus C}^\top)\Xi \left[\,\Xi^\top (I - X_{[p]\setminus C}X_{[p]\setminus C}^\top)\Xi\,\right]^{-1/2}\in \mathcal M_{X_{[p]\setminus C}} \subseteq \Stiefel n {|C|}.
    \end{align*}
    Here, the second equality follows from the fact that $\mathcal P_{\Stiefel n {|C|}}(\widetilde\Xi) = \widetilde\Xi(\widetilde\Xi^\top \widetilde\Xi)^{-1/2}$ when $\dist(\widetilde\Xi, \Stiefel n {|C|}) < 1$ and that 
    \begin{align*}
    \dist\left( (I - X_{[p]\setminus C}X_{[p]\setminus C}^\top)\Xi,\Stiefel n {|C|} \right) &\leq \dist\left( (I - X_{[p]\setminus C}X_{[p]\setminus C}^\top)\Xi, \mathcal M_{X_{[p]\setminus C}} \right)
    \leq \dist(\Xi, \mathcal M_{X_{[p]\setminus C}}) < 1.
    \end{align*}
    Therefore, the projection $\mathcal P_{\mathcal M_{X_{[p]\setminus C}}}(\Xi)$ is uniquely given by
    \begin{align*}
        \mathcal P_{\mathcal M_{X_{[p]\setminus C}}}(\Xi) &= \argmin_{U \in \mathcal M_{X_{[p]\setminus C}}} \|\Xi - U\|^2 = \argmin_{U \in \mathcal M_{X_{[p]\setminus C}}} \left\|(I - X_{[p]\setminus C}X_{[p]\setminus C}^\top)\Xi - U\right\|^2\\
        &= (I - X_{[p]\setminus C}X_{[p]\setminus C}^\top)\Xi \left[\,\Xi^\top (I - X_{[p]\setminus C}X_{[p]\setminus C}^\top)\Xi\,\right]^{-1/2}.
    \end{align*}
    It follows that $\mathcal M_{X_{[p]\setminus C}}$ is $1$-proximally smooth. Furthermore, if $X_{[p]\setminus C}^\top \Xi = 0$, then the above formula gives $\mathcal P_{\mathcal M_{X_{[p]\setminus C}}}(\Xi) = \Xi (\Xi^\top \Xi)^{-1/2} = \mathcal P_{\Stiefel n {|C|}}(\Xi)$.
\end{proof}

\subsection{Partial Riemannian Subgradient Oracle} \label{subsect-partial}

Recall that the objective function $f$ in Problem~\eqref{mainpb} is $\tau$-weakly convex on some bounded convex neighborhood $\varOmega \subseteq \R^{n \times p}$ containing $\Stiefel n p$. By~\cite[Proposition 4.4]{vial1983strong}, we know that $f$ is $L$-Lipschitz continuous on $\varOmega$ for some $L>0$. To define the updates in our proposed RSSM, we need the subgradient of the restriction of $f$ to a submanifold block. Towards that end, let $X \in \R^{n \times p}$ and $C \subseteq [p]$ be fixed. Consider the map $\Phi_{X,C} : \R^{n\times |C|} \to \R^{n\times p}$ given by $\Phi_{X,C}(Y) = YI_C^\top + X_{[p]\setminus C}I_{[p]\setminus C}^\top$. We define the \emph{partial Euclidean (Clarke) subdifferential} of $f$ with respect to $C$ at $X$ as 
\[ 
\pd^{C}\!f(X) = \pd (f \circ \Phi_{X,C})(X_C) \subseteq \R^{n \times |C|}.
\] 
A \emph{partial Euclidean (Clarke) subgradient} of $f$ with respect to $C$ at $X$ is denoted by $\widetilde \nabla^{C}\! f(X) = \widetilde \nabla (f \circ \Phi_{X,C})(X_C) \in \pd^{C}\! f(X)$. The following proposition reveals the relationship between the partial and full Euclidean (Clarke) subdifferentials.

\begin{proposition}[Relationship between Partial and Full Euclidean Subdifferentials] \label{prop-structural}
For any $X \in \R^{n \times p}$ and $C \subseteq [p]$, we have $\pd^{C}\!f(X) = \pd f(X)I_C$, i.e., for any $\widetilde \nabla^{C}\!f(X) \in \pd^{C}\! f(X)$, there exists $\widetilde \nabla f(X) \in \pd f(X)$ such that $\widetilde \nabla^{C}\! f(X) = \widetilde \nabla f(X)I_C$.
\end{proposition}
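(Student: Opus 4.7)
The plan is to recognize that $\Phi_{X,C}$ is an affine map and apply the Clarke chain rule, exploiting the fact that $f$ is Clarke regular (as a weakly convex function, per Section~\ref{subsect-weakcvx}). Concretely, I would first observe that
\[
\Phi_{X,C}(X_C) = X_C I_C^\top + X_{[p]\setminus C} I_{[p]\setminus C}^\top = X,
\]
and that the differential of $\Phi_{X,C}$ at any point is the constant linear map $L\colon \R^{n\times |C|}\to \R^{n\times p}$, $L(H) = H I_C^\top$. A short computation using $\langle L(H),Z\rangle = \tr(I_C H^\top Z) = \tr(H^\top Z I_C) = \langle H, Z I_C\rangle$ identifies the adjoint as $L^\ast(Z) = Z I_C$.

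Next, I would invoke the Clarke chain rule for the composition $f\circ \Phi_{X,C}$. For a locally Lipschitz function composed with a continuously differentiable map, Clarke's chain rule (\cite[Theorem 2.3.10]{clarke1990optimization}, or equivalently the Rockafellar--Wets formulation for subsmooth/regular functions) yields
\[
\pd (f\circ \Phi_{X,C})(X_C) \subseteq L^\ast \, \pd f(\Phi_{X,C}(X_C)) = \pd f(X)\, I_C,
\]
with \emph{equality} whenever $f$ is Clarke regular at $\Phi_{X,C}(X_C)=X$. Since $f$ is $\tau$-weakly convex on a neighborhood of $X$, it is Clarke regular there (as recalled at the start of Section~\ref{subsect-weakcvx}). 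Hence equality holds, giving $\pd^C f(X) = \pd f(X)\, I_C$, which is precisely the claim.

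I expect the argument to be essentially a direct citation of the standard chain rule, so there is no genuine obstacle; the only thing that must be checked carefully is that the regularity hypothesis is verified, which is automatic from weak convexity. One small bookkeeping point I would make explicit is the identification of $L^\ast$ as right-multiplication by $I_C$, since this is what converts the abstract chain-rule statement into the column-extraction formula appearing in the proposition.
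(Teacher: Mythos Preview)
Your argument is correct and matches the paper's own proof essentially line for line: both invoke Clarke regularity of the weakly convex $f$, apply the subdifferential chain rule (the paper cites \cite[Theorem 10.6]{rockafellar2009variational} where you cite Clarke), and identify the adjoint of the Jacobian of $\Phi_{X,C}$ as right-multiplication by $I_C$. Your explicit verification of $L^\ast(Z)=ZI_C$ is a welcome addition but otherwise there is nothing to add.
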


\begin{proof}
The function $f$, being weakly convex, is Clarke regular~\cite[Proposition 4.5]{vial1983strong}. Hence, by the subdifferential chain rule~\cite[Theorem 10.6]{rockafellar2009variational} and the fact that $\Phi_{X,C}(X_C)=X$, we have $\pd^{C}\! f(X) = \pd(f \circ \Phi_{X,C})(X_C) = \nabla \Phi_{X,C}(X_C)^\ast \pd f(X) = \pd f (X) I_C$, where $\nabla \Phi_{X,C}(X_C)^\ast$ is the adjoint of the Jacobian of $\Phi_{X,C}$ at $X_C$.
\end{proof}

The index set $C$ induces the submanifold block $\mathcal M_{X_{[p]\setminus C}}$ in~\eqref{eq-submfdblk}. We define the \emph{partial Riemannian (Clarke) subdifferential} of $f$ with respect to $C$ at $X \in \Stiefel{n}{p}$ as 
\[ \pd^C_{\mathcal M_{X_{[p]\setminus C}}} f(X) = \pd_{\mathcal M_{X_{[p]\setminus C}}}(f \circ \Phi_{X,C})(X_C). \]
A \emph{partial Riemannian (Clarke) subgradient} of $f$ with respect to $C$ at $X$ is denoted by
\begin{align} 
\widetilde \nabla^C_{\mathcal M_{X_{[p]\setminus C}}} f(X) &= \widetilde \nabla_{\mathcal M_{X_{[p]\setminus C}}} (f \circ \Phi_{X,C})(X_C) 
= \mathcal P_{T_{X_C}\mathcal M_{X_{[p]\setminus C}}} \left(\widetilde \nabla^{C} f(X) \right) \in \pd^C_{\mathcal M_{X_{[p]\setminus C}}} f(X), \label{eq:parRiesub}
\end{align}
where $\widetilde\nabla^{C} f(X) \in \pd^{C} f(X)$. It can be verified that $\mathcal P_{T_{X_C} \mathcal M_{X_{[p]\setminus C}}}(\xi) = X_C \Skew{X_C^\top \xi} + (I-XX^\top)\xi$ for all $\xi \in \R^{n \times |C|}$.

Given an integer $\ell \geq 2$, let $\mathfrak C = \{\,C_1, \ldots, C_\ell\,\}$ be a partition of $[p]$, i.e., $C_i \cap C_j = \varnothing$ and $\bigcup_{i=1}^\ell C_i = [p]$. For notational simplicity, given a matrix $X \in \Stiefel{n}{p}$, we denote $p_i = |C_i|$, $X_i = X_{C_i}$, $X_{-i} = X_{[p]\setminus C_i}$ for $i \in [\ell]$; and $C_{ij} = C_i \cup C_j$, $p_{ij} = |C_{ij}|$, $X_{ij} = X_{C_{ij}}$, $X_{-ij} = X_{[p]\setminus C_{ij}}$, $\mathcal M_{X_{-ij}} = \{\,Y \in \Stiefel n {p_{ij}}\,:\,X_{-ij}^\top Y = 0\,\} \subseteq \Stiefel n {p_{ij}}$ for $\{i,j\} \in \binom{[\ell]}{2}$. We now present our proposed RSSM in Algorithm~\ref{alg-RSSM}.



\begin{algorithm}[th!]
\caption{Randomized Submanifold Subgradient Method for Problem~\eqref{mainpb}} \label{alg-RSSM}
\textbf{Input}: An initial point $X^0 \in \Stiefel n p$, a partition $\mathfrak C = \{\,C_1, \ldots, C_\ell\,\}$ of $[p]$ with $\ell \geq 2$, and a sequence of stepsizes $\{\gamma_k\}_k \subseteq (0,\frac{1}{L})$.
\begin{algorithmic}[1]
\For{$k = 0, 1, 2, \ldots$}
\State Generate $\{i, j\} \sim \mathrm{Uniform}\binom{[\ell]}{2}$ and compute the partial Riemannian subgradient
\begin{equation} \label{eq-partialRiemsubgrad}
\widetilde \nabla^{C_{ij}}_{\mathcal{M}_{X_{-ij}^k}} f(X^k) = \mathcal P_{T_{X_{ij}^k} \mathcal M_{X_{-ij}^k}}\left(\widetilde\nabla^{C_{ij}} f(X^k)\right) \in T_{X_{ij}^k} \mathcal M_{X_{-ij}^k}.
\end{equation}
\State \label{line:update} Perform the updates
\begin{align}
X^{k+1}_{ij} &= \mathcal P_{\mathcal M_{X_{-ij}^k}} \left( X_{ij}^k - \gamma_k \widetilde \nabla^{C_{ij}}_{\mathcal{M}_{X_{-ij}^k}} f(X^k) \right) 
= \mathcal P_{\Stiefel n {p_{ij}}} \left( X_{ij}^k - \gamma_k \widetilde \nabla^{C_{ij}}_{\mathcal{M}_{X_{-ij}^k}} f(X^k) \right), \label{eq-projsubmfdblk} \\ 
X_{-ij}^{k+1} &= X_{-ij}^k. \nonumber
\end{align}
\EndFor
\end{algorithmic}
\end{algorithm}
At each iteration $k$, RSSM first randomly selects an unordered column block index pair $\{i,j\} \in \binom{[\ell]}{2}$ uniformly. Then, it computes the partial Riemannian subgradient $\widetilde \nabla^{C_{ij}}_{\mathcal{M}_{X_{-ij}^k}} f(X^k)$ according to \eqref{eq-partialRiemsubgrad}. Lastly, it updates the block $X_{ij}$ via the retracted partial Riemannian subgradient step in~\eqref{eq-projsubmfdblk} and keeps the block $X_{-ij}$ unchanged.
We note that line~\ref{line:update} of RSSM is well defined, in the sense that the two projections in~\eqref{eq-projsubmfdblk} are equal and yield a singleton. This follows from Lemma~\ref{lem-projsubmfdblk}, because we have $\left\|\widetilde \nabla^{C_{ij}}_{\mathcal{M}_{X_{-ij}^k}} f(X^k)\right\| \le L$ by the $L$-Lipschitz continuity of $f$ and 
\[ \dist \left( X_{ij}^k - \gamma_k \widetilde \nabla^{C_{ij}}_{\mathcal{M}_{X_{-ij}^k}} f(X^k), \mathcal M_{X_{-ij}^k} \right) \le \gamma_k \left\| \widetilde \nabla^{C_{ij}}_{\mathcal{M}_{X_{-ij}^k}} f(X^k) \right\| < 1 \]
by the fact that $\gamma_k\in (0,\tfrac{1}{L})$.

\begin{remark}
Besides selecting the index pair $\{i,j\}$ randomly according to the uniform distribution, there are other randomized or deterministic selection rules, such as cyclic, Gauss-Southwell, etc. The uniformly random selection rule is theoretically advantageous due to its simplicity, especially in the Riemannian setting. Specifically, for deterministic selection rules, in order to analyze the aggregated effect of a full epoch of iterations that cover all the columns, one needs to relate the partial Riemannian subgradients at consecutive iterates, which typically requires the notion of parallel transport.
\end{remark}

\subsection{Per-Iteration Complexity}

The proposition below establishes the per-iteration complexity of our proposed RSSM in terms of number of floating point operations when we choose a \emph{uniform} partition of the column indices.

\begin{proposition}[Per-iteration Complexity] \label{prop-periteration}
Let $\mathfrak C = \{ \, C_1, \ldots, C_\ell \, \}$ be a partition of $[p]$ with $\ell = |\mathfrak{C}| \ge 2$ and $|C_i| \leq \lceil \frac{p}{\ell} \rceil$ for all $i \in [\ell]$. Given the partial Euclidean subgradient $\widetilde \nabla^{C_{ij}}\! f(X) \in \R^{n \times p_{ij}}$, an iteration of RSSM requires $\mathcal O\left(\frac{np^2}{\ell}\right)$ floating point operations.
\end{proposition}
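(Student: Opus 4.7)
The plan is a routine flop count, so the main task is to enumerate carefully the elementary linear algebra operations an iteration performs and to add them up while tracking the dominant term.

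First, I would break a generic iteration of RSSM into two pieces: (i) the projection onto the tangent space to the submanifold block, which produces $\widetilde\nabla^{C_{ij}}_{\mathcal{M}_{X_{-ij}^k}} f(X^k)$ from the given partial Euclidean subgradient $\widetilde\nabla^{C_{ij}} f(X^k)$, and (ii) the retraction onto the Stiefel manifold $\Stiefel{n}{p_{ij}}$ from Lemma~\ref{lem-projsubmfdblk} (second equality in \eqref{eq-projsubmfdblk}). For (i), I would use the explicit formula stated just after \eqref{eq:parRiesub}, namely $\mathcal P_{T_{X_{ij}^k}\mathcal M_{X_{-ij}^k}}(\xi) = X_{ij}^k\Skew{(X_{ij}^k)^\top\xi} + (I - X^k(X^k)^\top)\xi$, with $\xi = \widetilde\nabla^{C_{ij}} f(X^k) \in \R^{n\times p_{ij}}$. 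Its cost decomposes into: $(X_{ij}^k)^\top\xi$ at $O(np_{ij}^2)$, the symmetrization at $O(p_{ij}^2)$, multiplication by $X_{ij}^k$ at $O(np_{ij}^2)$, and then $(X^k)^\top\xi$ followed by $X^k(\cdot)$ at $O(npp_{ij})$ each. For (ii), writing $Z = X_{ij}^k - \gamma_k \widetilde\nabla^{C_{ij}}_{\mathcal M_{X_{-ij}^k}}f(X^k)$, the projection is $Z(Z^\top Z)^{-1/2}$: forming $Z^\top Z$ costs $O(np_{ij}^2)$, computing $(Z^\top Z)^{-1/2}$ via a symmetric eigendecomposition costs $O(p_{ij}^3)$, and the final multiplication costs $O(np_{ij}^2)$.

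Next, I would aggregate these bounds. Since $p_{ij} \le n$ and $p_{ij} \le p$, the dominant terms are the two $O(npp_{ij})$ contributions coming from $(X^k)^\top\xi$ and $X^k((X^k)^\top\xi)$; all other terms are of order $O(np_{ij}^2)$ or $O(p_{ij}^3)$, which are absorbed by $O(npp_{ij})$. Finally, invoking the hypothesis $|C_i|\le\lceil p/\ell\rceil$ for all $i\in[\ell]$ gives $p_{ij} = p_i + p_j \le 2\lceil p/\ell\rceil = O(p/\ell)$, so the total per-iteration cost is $O(np\cdot p/\ell) = O(np^2/\ell)$, as claimed.

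There is no genuine obstacle in this proof; the only thing to be careful about is not to overcount. In particular, one must observe that the two matrix-matrix products $(X^k)^\top\xi$ and $X^k((X^k)^\top\xi)$, rather than the Stiefel retraction on $\Stiefel{n}{p_{ij}}$, are what drive the complexity, and that we exploit uniformity of the partition only at the very last step in bounding $p_{ij}$. I would present the count as a short itemized tally with a concluding sentence applying the $p_{ij} \le 2\lceil p/\ell\rceil$ bound.
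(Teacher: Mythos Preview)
Your proposal is correct and follows essentially the same approach as the paper: decompose an iteration into the tangent-space projection and the polar retraction, tally the flops of each matrix product, identify the two $O(npp_{ij})$ products involving the full $X^k$ as dominant, and conclude via $p_{ij}\le 2\lceil p/\ell\rceil=O(p/\ell)$. The only cosmetic difference is that the paper merges your separate computations of $(X_{ij}^k)^\top\xi$ and $(X^k)^\top\xi$ into the single product $Y=(X^k)^\top\xi$ and then symmetrizes the $C_{ij}$-rows of $Y$, writing the projection as $\xi - X^kY$; this avoids a redundant $O(np_{ij}^2)$ step but does not change the asymptotic count.
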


\begin{proof}
Given a matrix $\xi_{ij} \in \R^{n \times p_{ij}}$, we can implement \eqref{eq-partialRiemsubgrad} as follows:
\begin{enumerate}
    \item Find $Y = X^\top \xi_{ij} \in \R^{p \times p_{ij}}$, which involves $\mathcal O(npp_{ij})$ floating point operations.
    \item Update $Y(C_{ij},:) \in \R^{p_{ij} \times p_{ij}}$ by $\mathrm{Sym}(Y(C_{ij},:))$, which involves $\mathcal O(p_{ij}^2)$ floating point operations.
    \item Perform the multiplication $XY \in \R^{n \times p_{ij}}$, which involves $\mathcal O(npp_{ij})$ floating point operations.
    \item Compute $\mathcal P_{T_{X_{ij}}\mathcal M_{X_{-ij}}}(\xi_{ij}) = \xi_{ij} - XY \in \R^{n\times p_{ij}}$, which involves $np_{ij}$ floating point operations.
\end{enumerate}
Hence, given the partial Euclidean subgradient $\widetilde \nabla^{C_{ij}} f(X) \in \R^{n \times p_{ij}}$, computing the partial Riemannian subgradient $\widetilde \nabla^{C_{ij}}_{\mathcal{M}_{X_{-ij}}} f(X) \in \R^{n \times p_{ij}}$ via \eqref{eq-partialRiemsubgrad} requires $\mathcal O(npp_{ij} + np_{ij} + p_{ij}^2) = \mathcal O(npp_{ij})$ (since $p_{ij} < p$) floating point operations.
Moreover, given a matrix $\Xi \in \R^{n \times p_{ij}}$, performing the polar decomposition $\mathcal P_{\Stiefel n {p_{ij}}}(\Xi)$ requires $\mathcal O(np^2_{ij})$ floating point operations; see, e.g., \cite[Chapter 8.6.4]{golub2013matrix}. This implies that the updates in \eqref{eq-projsubmfdblk} require $\mathcal O(np_{ij} + np_{ij}^2) = \mathcal O(np_{ij}^2)$ floating point operations. Putting the above together and noting that $p_{ij} \leq \lceil \frac{2p}{\ell} \rceil = \mathcal O(\frac{p}{\ell})$, we conclude that an iteration of RSSM requires $\mathcal O(npp_{ij} + np_{ij}^2) = \mathcal O(npp_{ij}) = \mathcal O(\frac{np^2}{\ell})$ floating point operations.
\end{proof}


\section{Convergence Analysis} \label{sect-theory}

In this section, we study the convergence behavior of the proposed RSSM. We first prove a generalization of~\cite[Theorem 1]{li2021weakly}, which is instrumental to the analysis of subgradient-type methods for weakly convex optimization over a compact proximally smooth manifold. Recall that the Stiefel manifold and the submanifold block introduced in Definition~\ref{def-submfdblk} are both compact proximally smooth; see Section~\ref{subsect-proxsmooth} and Lemma~\ref{lem-projsubmfdblk}.

\begin{lemma}[Riemannian Subgradient Inequality] \label{lem-RSI}
Let $\mathcal M \subseteq \R^{n \times p}$ be a compact $R$-proximally smooth manifold. Suppose that $h : \mathbb R^{n \times p} \to \mathbb R$ is $\tau$-weakly convex for some $\tau \in \R$. Then, for any bounded convex neighborhood $\mathcal U$ that contains $\mathcal M$, there exists a constant $L > 0$ such that $h$ is $L$-Lipschitz continuous on $\mathcal U$. Moreover, for any $X, Y \in \mathcal M$, $\widetilde \nabla h(X) \in \pd h(X)$, and $\widetilde \nabla_{\mathcal M}h(X) = \mathcal P_{T_X \mathcal M}\widetilde \nabla h(X) \in \partial_{\mathcal M}h(X)$, we have
\begin{align} \label{eq-RSIv1}
h(Y) &\geq h(X) + \langle\, \widetilde \nabla_{\mathcal M} h(X), Y - X\,\rangle - \tfrac{\tau + \|\widetilde\nabla h(X)\|/R}{2}\,\|Y-X\|^2\\
&\geq h(X) + \langle\, \widetilde \nabla_{\mathcal M} h(X), Y - X\,\rangle - \tfrac{\tau + L/R}{2}\,\|Y-X\|^2. \label{eq-RSI}
\end{align}
\end{lemma}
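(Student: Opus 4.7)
The plan is to derive the inequality by decomposing the Euclidean subgradient into its tangent and normal parts at $X$, applying the standard weak convexity inequality \eqref{eq-weakcvx} in the ambient space, and then controlling the normal contribution via the proximal smoothness characterization~\eqref{eq-proxsmooth}. The Lipschitz statement is essentially book-keeping: a $\tau$-weakly convex function is $g(\cdot) - \tfrac{\tau}{2}\|\cdot\|^2$ with $g$ convex, hence locally Lipschitz on the interior of its domain. Since $\mathcal{U}$ is bounded, I would enlarge $\mathcal{U}$ slightly to a bounded open convex set $\mathcal{U}'$, invoke~\cite[Proposition~4.4]{vial1983strong} (already cited in Section~\ref{subsect-weakcvx}) to obtain a finite Lipschitz constant $L$ on $\mathcal{U}'$, and restrict back to $\mathcal{U}$.

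For the main inequality, fix $X, Y \in \mathcal M$ and $\widetilde\nabla h(X) \in \partial h(X)$. Write the orthogonal decomposition
\[
\widetilde\nabla h(X) = \widetilde\nabla_{\mathcal M} h(X) + \zeta, \qquad \widetilde\nabla_{\mathcal M} h(X) \in T_X\mathcal M,\ \zeta \in N_X\mathcal M,
\]
where $\widetilde\nabla_{\mathcal M} h(X) = \mathcal P_{T_X\mathcal M}\widetilde\nabla h(X)$ by definition. Orthogonality gives the norm bound $\|\zeta\| \le \|\widetilde\nabla h(X)\|$, which will let me convert the first inequality into the second once Lipschitz continuity is in hand. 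Starting from the ambient weak convexity bound \eqref{eq-weakcvx},
\[
h(Y) \ge h(X) + \langle \widetilde\nabla h(X), Y-X\rangle - \tfrac{\tau}{2}\|Y-X\|^2,
\]
and splitting the inner product along the decomposition yields
\[
h(Y) \ge h(X) + \langle \widetilde\nabla_{\mathcal M}h(X), Y-X\rangle + \langle \zeta, Y-X\rangle - \tfrac{\tau}{2}\|Y-X\|^2.
\]

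The crux is to lower bound the cross term $\langle \zeta, Y-X\rangle$. Since $N_X\mathcal M$ is a linear subspace, $-\zeta \in N_X\mathcal M$ as well, so applying \eqref{eq-proxsmooth} with the normal vector $-\zeta$ and the manifold points $X, Y$ gives
\[
2R\,\langle -\zeta, Y-X\rangle \le \|{-\zeta}\|\cdot\|Y-X\|^2 = \|\zeta\|\cdot\|Y-X\|^2,
\]
i.e. $\langle \zeta, Y-X\rangle \ge -\tfrac{\|\zeta\|}{2R}\|Y-X\|^2$. Substituting this into the displayed inequality and invoking $\|\zeta\|\le \|\widetilde\nabla h(X)\|$ produces~\eqref{eq-RSIv1}; bounding $\|\widetilde\nabla h(X)\|\le L$ via the Lipschitz property of $h$ on $\mathcal{U}$ and the fact that $X\in\mathcal M\subseteq\mathcal U$ then yields~\eqref{eq-RSI}.

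I do not anticipate a serious obstacle, but the one subtle point is invoking proximal smoothness in the right direction: the inequality~\eqref{eq-proxsmooth} is a one-sided bound, and naive use gives only an upper bound on $\langle \zeta, Y-X\rangle$. The trick is to exploit that the normal space is a linear subspace (so $-\zeta$ is also normal) to turn the upper bound into the lower bound we need. A minor care point is that the Euclidean subgradient $\widetilde\nabla h(X)$ appearing in \eqref{eq-weakcvx} must be the same one used to define $\widetilde\nabla_{\mathcal M} h(X)$ via $\mathcal P_{T_X\mathcal M}$; this is precisely the convention adopted right after~\eqref{eq-Rsubdiff}, so the decomposition above is consistent with the statement of the lemma.
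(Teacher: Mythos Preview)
Your proposal is correct and follows essentially the same route as the paper's proof: both decompose $\widetilde\nabla h(X)$ into tangent and normal parts, apply the ambient weak convexity inequality~\eqref{eq-weakcvx}, and control the normal contribution via the proximal smoothness characterization~\eqref{eq-proxsmooth}, with Lipschitz continuity handled by~\cite[Proposition~4.4]{vial1983strong}. The only difference is cosmetic: you explicitly spell out the sign-flip trick (applying~\eqref{eq-proxsmooth} to $-\zeta$) to turn the one-sided bound into a lower bound, whereas the paper states the resulting lower bound on $\langle \mathcal P_{N_X\mathcal M}\widetilde\nabla h(X), Y-X\rangle$ directly.
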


We remark that Lemma~\ref{lem-RSI} applies even when $h$ is strongly convex, i.e., $\tau < 0$.

\begin{proof}[Proof of Lemma~\ref{lem-RSI}]
The Lipschitz continuity of $h$ on any bounded convex neighborhood that contains $\mathcal{M}$ follows directly from~\cite[Proposition 4.4]{vial1983strong}. Now, for any $X, Y \in \mathcal M$ and $\widetilde\nabla h(X) \in \pd h(X)$, we use~\eqref{eq-weakcvx} to get
\begin{align*}
h(Y) 
&\textstyle \geq h(X) + \langle\,\widetilde\nabla h(X), Y - X\,\rangle - \frac{\tau}{2}\|\,Y-X\,\|^2\\
&\textstyle = h(X) + \langle\,\mathcal P_{T_X \mathcal M}\widetilde\nabla h(X) + \mathcal P_{N_X \mathcal M}\widetilde\nabla h(X), Y - X\,\rangle - \frac{\tau}{2}\|\,Y-X\,\|^2.
\end{align*}
By \eqref{eq-proxsmooth}, we have
$$\langle\,\mathcal P_{N_X \mathcal M}\widetilde\nabla h(X), Y - X\,\rangle \geq -\frac{\|\mathcal P_{N_X \mathcal M}\widetilde \nabla h(X)\|}{2R}\,\|\,Y-X\,\|^2 \geq -\frac{\|\widetilde\nabla h(X)\|}{2R}\, \|\,Y-X\,\|^2.$$ 
Putting the above together and using the expression for $\pd_{\mathcal M} h(X)$ in~\eqref{eq-Rsubdiff}, we get~\eqref{eq-RSIv1}. Since the $L$-Lipschitz continuity of $h$ on $\mathcal{U}$ implies that $\| \widetilde\nabla h(X) \| \le L$, we get~\eqref{eq-RSI}.
\end{proof}

In what follows, we shall continue to work under the setting where the objective function $f$ of Problem~\eqref{mainpb} is $\tau$-weakly convex and $L$-Lipschitz continuous on some bounded convex neighborhood $\varOmega \subseteq \R^{n \times p}$ containing $\Stiefel n p$ for some $\tau \ge 0$ and $L>0$, and the set $\mathfrak{C} = \{ \, C_1,\ldots,C_{\ell} \,\}$ forms a partition of $[p]$ with $\ell = |\mathfrak{C}| \ge 2$.

\subsection{Averaging Operator} \label{subsect-coordrep}

Motivated by our uniformly random selection rule for the submanifold blocks, we now study the average of the \emph{lifted} partial Riemannian subgradients $\left\{\, \mathcal P_{T_{X_{ij}}\mathcal M_{X_{-ij}}}(\xi_{ij})I_{ij}^\top : \{i,j\} \in \binom{[\ell]}{2} \,\right\}$ on $T_X \Stiefel n p$. Note that given the partition $\mathfrak C$ of $[p]$, we have $X_{-ij}^\top (\mathcal P_{T_{X_{ij}}\mathcal M_{X_{-ij}}}(\xi_{ij}) ) = 0$ for all $\{i,j\} \in \binom{[\ell]}{2}$. Consider the linear operator $\mathcal A_X : \R^{n \times p} \to \R^{n \times p}$, called the \emph{averaging operator}, defined as
\begin{align} \label{eq-scalingoperator}
\mathcal A_X(\xi) = \frac{1}{\binom{\ell}{2}} \sum_{\{i,j\} \in \binom{[\ell]}{2}} (I-X_{-ij}X_{-ij}^\top)\xi_{ij} I_{ij}^\top.
\end{align}
Basically, the averaging operator $\mathcal A_X$ computes a simple average of the projections of partial Euclidean subgradients onto the orthogonal complement of the subspace spanned by the unaltered columns. Upon letting $X_{\perp} \in \Stiefel{n}{n-p}$ be such that $[X\ X_\perp] \in \Stiefel{n}{n}$, we can express $\mathcal A_X$ as
\begin{align}
\mathcal A_X(\xi) &\textstyle= X \left(\binom{\ell}{2}^{-1}\,\sum\limits_{\{i,j\} \in \binom{[\ell]}{2}} I_{ij}X_{ij}^\top \xi_{ij} I_{ij}^\top\right) + X_\perp \left(X_\perp^\top \left(\binom{\ell}{2}^{-1} \,\sum\limits_{\{i,j\} \in \binom{[\ell]}{2}} \xi_{ij} I_{ij}^\top\right)\right)\nonumber\\
&\textstyle= X \left(\binom{\ell}{2}^{-1}\, Q \boxdot (X^\top \xi)\right) + X_\perp \left(\frac{2}{\ell}\, X_\perp^\top \xi\right), \label{eq-scaling-coordrep}
\end{align}
where $Q = J + (\ell - 2) I$ and $\boxdot$ denotes the \emph{block Hadamard product} with respect to $\mathfrak C$, i.e., for $A = (a_{ij}) \in \R^{\ell \times \ell}$ and $B = (\,B_{ij}\,) \in \R^{p \times p}$ with $B_{ij} \in \R^{p_i \times p_j}$, we define $A \boxdot B = (\,a_{ij} B_{ij}\,) \in \R^{p \times p}$. In particular, we have
\begin{align*} 
\mathcal A_X (\xi) &= 
\textstyle
X  \left[\begin{array}{cccc}
\frac{2}{\ell}\,X_1^\top \xi_1 & \binom{\ell}{2}^{-1}X_1^\top \xi_2 & \cdots & \binom{\ell}{2}^{-1}X_1^\top \xi_\ell\\
\binom{\ell}{2}^{-1}X_2^\top \xi_1 & \frac{2}{\ell}\,X_2^\top \xi_2 & \cdots & \binom{\ell}{2}^{-1} X_2^\top \xi_\ell\\
\vdots & \vdots & \ddots & \vdots\\
\binom{\ell}{2}^{-1}X_\ell^\top \xi_1 & \binom{\ell}{2}^{-1}X_\ell^\top \xi_2 & \cdots & \frac{2}{\ell}\, X_\ell^\top \xi_\ell
\end{array}\right] + X_\perp \left(\frac{2}{\ell}\,X_\perp^\top \xi\right).
\end{align*}

We show in Lemma~\ref{lem-commutative} that $\mathcal A_X$ and $\mathcal P_{T_X \Stiefel n p}$ commute, and 
$$\textstyle\mathcal A_X \circ \mathcal P_{T_X\Stiefel n p}(\xi) = \binom{\ell}{2}^{-1} \sum_{\{i,j\} \in \binom{[\ell]}{2}}\mathcal P_{T_{X_{ij}}\mathcal M_{X_{-ij}}}(\xi_{ij})I_{ij}^\top.$$
Furthermore, we show in Lemma~\ref{lem-selfadjoint} that $\mathcal A_X$ is positive definite, which implies that the Mahalanobis inner product $\langle\,\xi, \eta\,\rangle_{\mathcal A_X^{-1}} = \langle\,\mathcal A_X^{-1}(\xi), \eta\,\rangle$ and the Mahalanobis norm $\|\xi\|_{\mathcal A_X^{-1}} = \sqrt{\langle\,\mathcal A_X^{-1}(\xi), \xi\,\rangle}$ are well defined.

The following lemma highlights the importance of $\langle\,\cdot\,,\,\cdot\,\rangle_{\mathcal A_X^{-1}}$.  

\begin{lemma}[Average of Partial Riemannian Subgradients] \label{lem-cindep}
Suppose that $\{i,j\} \sim \mathrm{Uniform}\binom{[\ell]}{2}$. For any $X \in \Stiefel{n}{p}$, $\widetilde\nabla f(X) \in \pd f(X)$, and $\eta \in \R^{n \times p}$, we have
\begin{align}
    \textstyle
    \E\left[ \left\langle\, \widetilde \nabla^{C_{ij}}_{\mathcal{M}_{X_{-ij}}} f(X)I_{ij}^\top, \eta\, \right\rangle_{\mathcal A_X^{-1}}\right] &= \langle\,\widetilde\nabla_{\Stiefel n p} f(X), \eta\,\rangle,\label{eq-cindepInnerProduct}\\ 
    \textstyle
    \E\left[ \left\| \widetilde \nabla^{C_{ij}}_{\mathcal{M}_{X_{-ij}}} f(X) I_{ij}^\top \right\|_{\mathcal A_X^{-1}}^2 \right] &= \|\widetilde\nabla_{\Stiefel n p} f(X)\|^2, \label{eq-cindepNorm2}
\end{align}
where $\widetilde\nabla_{\Stiefel n p} f(X) = \mathcal P_{T_{X}\Stiefel n p}(\widetilde\nabla f(X))$.
\end{lemma}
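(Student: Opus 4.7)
The plan is to treat the two identities separately. Identity~\eqref{eq-cindepInnerProduct} is essentially formal. Since $\mathcal{A}_X^{-1}$ is a deterministic linear operator, it commutes with the expectation, so
\begin{equation*}
\mathbb{E}\!\left[\left\langle \widetilde\nabla^{C_{ij}}_{\mathcal{M}_{X_{-ij}}} f(X)I_{ij}^\top,\eta\right\rangle_{\!\mathcal{A}_X^{-1}}\right]
= \left\langle \mathcal{A}_X^{-1}\,\mathbb{E}\bigl[\widetilde\nabla^{C_{ij}}_{\mathcal{M}_{X_{-ij}}}f(X)I_{ij}^\top\bigr],\eta\right\rangle.
\end{equation*}
By Lemma~\ref{lem-commutative}, the inner expectation equals $\mathcal{A}_X\bigl(\widetilde\nabla_{\Stiefel n p}f(X)\bigr)$, so the outer $\mathcal{A}_X^{-1}$ cancels it and \eqref{eq-cindepInnerProduct} follows at once.

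Identity~\eqref{eq-cindepNorm2} is more delicate because $\|\cdot\|^2_{\mathcal{A}_X^{-1}}$ is quadratic in the random block. The plan is to diagonalize $\mathcal{A}_X$ in the orthogonal decomposition $\R^{n\times p} = X\R^{p\times p} \oplus X_\perp \R^{(n-p)\times p}$. Reading off the coordinate representation~\eqref{eq-scaling-coordrep}, $\mathcal{A}_X^{-1}$ scales the diagonal $\mathfrak C$-blocks of $X^\top(\cdot)$ by $\tfrac{\ell}{2}$, the off-diagonal $\mathfrak C$-blocks by $\binom{\ell}{2}$, and the entire $X_\perp^\top(\cdot)$ component uniformly by $\tfrac{\ell}{2}$. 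Writing $g = \widetilde\nabla f(X)$, $\tilde S = \Skew{X^\top g}$ block-decomposed as $(\tilde S_{ab})_{a,b\in[\ell]}$ via $\mathfrak C$, and $\xi_{ij} = \widetilde\nabla^{C_{ij}}_{\mathcal{M}_{X_{-ij}}}f(X)I_{ij}^\top$, the projection formula for $\mathcal{P}_{T_{X_{ij}}\mathcal{M}_{X_{-ij}}}$ recalled in Section~\ref{subsect-partial} shows that $X^\top\xi_{ij}$ is supported on the $\{i,j\}\times\{i,j\}$ sub-block and agrees there with the corresponding sub-block of $\tilde S$, while $X_\perp^\top\xi_{ij}$ equals $X_\perp^\top g$ with all columns outside $C_{ij}$ zeroed. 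Substituting into the diagonal form of $\mathcal{A}_X^{-1}$ yields
\begin{equation*}
\|\xi_{ij}\|^2_{\mathcal{A}_X^{-1}} = \tfrac{\ell}{2}\!\!\sum_{a\in\{i,j\}}\!\bigl(\|\tilde S_{aa}\|^2+\|(X_\perp^\top g)_a\|^2\bigr) + \binom{\ell}{2}\!\!\sum_{\substack{a,b\in\{i,j\}\\ a\neq b}}\!\|\tilde S_{ab}\|^2.
\end{equation*}

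Averaging over the uniform $\{i,j\}$ is then pure double counting: each $a\in[\ell]$ lies in $\ell-1$ pairs while each ordered $(a,b)$ with $a\neq b$ lies in exactly one. The multipliers collapse via $\binom{\ell}{2}^{-1}(\ell-1)\tfrac{\ell}{2}=1$ and $\binom{\ell}{2}^{-1}\binom{\ell}{2}=1$, giving $\mathbb{E}\|\xi_{ij}\|^2_{\mathcal{A}_X^{-1}} = \|\tilde S\|^2 + \|X_\perp^\top g\|^2$. The right-hand side of~\eqref{eq-cindepNorm2} equals this same quantity by orthogonality applied to $\widetilde\nabla_{\Stiefel n p}f(X) = X\tilde S + X_\perp X_\perp^\top g$. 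The main obstacle is exactly this bookkeeping, but the asymmetric block scaling ($\tfrac{2}{\ell}$ on the diagonal vs.\ $\tfrac{2}{\ell(\ell-1)}$ off-diagonal) in $\mathcal{A}_X$ is reverse-engineered precisely so that the two counting factors cancel out.
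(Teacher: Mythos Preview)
Your proof of~\eqref{eq-cindepInnerProduct} is the same as the paper's: both pull $\mathcal A_X^{-1}$ through the expectation by linearity and invoke Lemma~\ref{lem-commutative} to identify the average of the lifted partial subgradients with $\mathcal A_X\bigl(\widetilde\nabla_{\Stiefel n p}f(X)\bigr)$.

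For~\eqref{eq-cindepNorm2} your argument is correct but genuinely different from the paper's. You work in coordinates: using the eigenstructure of $\mathcal A_X$ (Lemma~\ref{lem-selfadjoint}) you write $\|\xi_{ij}\|_{\mathcal A_X^{-1}}^2$ explicitly as a weighted sum of block norms and then reduce the expectation to a double-counting argument over $\binom{[\ell]}{2}$. The paper instead stays operator-theoretic: it introduces $\mathcal Q_{ij}(\xi)=\mathcal P_{T_{X_{ij}}\mathcal M_{X_{-ij}}}(\xi I_{ij})I_{ij}^\top$, observes that $\mathcal Q_{ij}$ is an orthogonal projection (hence self-adjoint and idempotent), and uses the commutation identity~\eqref{eq-commutative1} to deduce that $\mathcal Q_{ij}$ commutes with $\mathcal A_X^{-1}$. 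These facts give
\[
\bigl\|\xi_{ij}\bigr\|_{\mathcal A_X^{-1}}^2
=\bigl\langle \mathcal A_X^{-1}\mathcal Q_{ij}(\widetilde\nabla f(X)),\,\mathcal Q_{ij}(\widetilde\nabla f(X))\bigr\rangle
=\bigl\langle \xi_{ij},\,\widetilde\nabla f(X)\bigr\rangle_{\mathcal A_X^{-1}},
\]
so that~\eqref{eq-cindepNorm2} follows by setting $\eta=\widetilde\nabla f(X)$ in~\eqref{eq-cindepInnerProduct} (and noting $\langle\widetilde\nabla_{\Stiefel n p}f(X),\widetilde\nabla f(X)\rangle=\|\widetilde\nabla_{\Stiefel n p}f(X)\|^2$). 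The paper's route is slicker and explains structurally \emph{why} the quadratic identity reduces to the linear one; your route is more elementary, makes the combinatorics of the averaging completely transparent, and shows concretely how the two eigenvalues $\tfrac{2}{\ell}$ and $\binom{\ell}{2}^{-1}$ of $\mathcal A_X$ are tailored to the two incidence counts $\ell-1$ and $1$.
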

We remark that in~\eqref{eq-cindepInnerProduct} and~\eqref{eq-cindepNorm2}, both the full and partial Riemannian subgradients, $\widetilde\nabla_{\Stiefel n p} f(X) = \mathcal P_{T_{X}\Stiefel n p}(\widetilde\nabla f(X))$ and $\widetilde \nabla^{C_{ij}}_{\mathcal{M}_{X_{-ij}}} f(X) = \mathcal P_{T_{X_{ij}}\mathcal M_{X_{-ij}}}(\widetilde\nabla f(X)I_{ij})$ (the latter expression follows from~\eqref{eq:parRiesub} and Proposition~\ref{prop-structural}), depend on the choice of $\widetilde\nabla f(X)$.

\begin{proof}[Proof of Lemma~\ref{lem-cindep}]
Using Lemma \ref{lem-commutative} and the expression for $\widetilde \nabla^{C_{ij}}_{\mathcal{M}_{X_{-ij}}} f(X)$ mentioned above, we have
\begin{align*}
& \textstyle
\E\left[ \left\langle\, \widetilde \nabla^{C_{ij}}_{\mathcal{M}_{X_{-ij}}} f(X)I_{ij}^\top, \eta\, \right\rangle_{\mathcal A_X^{-1}}\,\right]
= \left\langle\frac{1}{\binom{\ell}{2}}\sum\limits_{\{i,j\} \in \binom{[\ell]}{2}} \mathcal P_{T_{X_{ij}}\mathcal M_{X_{-ij}}}(\widetilde\nabla f(X)I_{ij})I_{ij}^\top, \eta\,\right\rangle_{\mathcal A_X^{-1}}\\
&\textstyle
= \left\langle \mathcal A_X \circ \mathcal P_{T_X \Stiefel n p}(\widetilde\nabla f(X)), \eta \right\rangle_{\mathcal A_X^{-1}} = \langle\,\widetilde\nabla_{\Stiefel n p} f(X), \eta\,\rangle,
\end{align*}
which shows that \eqref{eq-cindepInnerProduct} holds. Next, for $\{i,j\} \in \binom{[\ell]}{2}$, we define $\mathcal{Q}_{ij}:\R^{n \times p} \rightarrow \R^{n \times p}$ as $\mathcal{Q}_{ij}(\xi) = \mathcal P_{T_{X_{ij}}\mathcal M_{X_{-ij}}}(\xi I_{ij})I_{ij}^\top$. Since $\mathcal P_{T_{X_{ij}}\mathcal M_{X_{-ij}}} \circ \mathcal P_{T_{X_{ij}}\mathcal M_{X_{-ij}}} = \mathcal P_{T_{X_{ij}}\mathcal M_{X_{-ij}}}$, we have $\mathcal{Q}_{ij} \circ \mathcal{Q}_{ij} = \mathcal{Q}_{ij}$. Moreover, by \eqref{eq-commutative1}, we have $\mathcal{A}_X \circ \mathcal{Q}_{ij} = \mathcal{Q}_{ij} \circ \mathcal{A}_X$. This, together with Lemma~\ref{lem-selfadjoint}, implies that $\mathcal{A}_X^{-1} \circ \mathcal{Q}_{ij} = \mathcal{Q}_{ij} \circ \mathcal{A}_X^{-1}$. It follows that
\begin{align*}
&\textstyle
\left\langle\, \widetilde \nabla^{C_{ij}}_{\mathcal{M}_{X_{-ij}}} f(X)I_{ij}^\top, \widetilde \nabla f(X)\, \right\rangle_{\mathcal A_X^{-1}} = \left\langle\,\mathcal A_X^{-1} \left(\mathcal P_{T_{X_{ij}}\mathcal M_{X_{-ij}}}(\widetilde\nabla f(X)I_{ij})I_{ij}^\top\right), \widetilde\nabla f(X) \right\rangle \\
&\textstyle= \left\langle\,\mathcal A_X^{-1} \left( \mathcal{Q}_{ij} \circ \mathcal{Q}_{ij}(\widetilde\nabla f(X)) \right), \widetilde\nabla f(X) \right\rangle = \left\langle\,\mathcal Q_{ij} \left( \mathcal{A}_X^{-1} \circ \mathcal{Q}_{ij}(\widetilde\nabla f(X)) \right), \widetilde\nabla f(X) \right\rangle \\
&\textstyle= \left\langle\,\mathcal P_{T_{X_{ij}}\mathcal M_{X_{-ij}}} \left(\mathcal A_X^{-1} \left(\mathcal P_{T_{X_{ij}}\mathcal M_{X_{-ij}}}(\widetilde\nabla f(X)I_{ij})I_{ij}^\top\right)I_{ij}\right) I_{ij}^\top, \widetilde\nabla f(X) \right\rangle\\
&\textstyle= \left\langle\,\mathcal A_X^{-1} \left(\mathcal P_{T_{X_{ij}}\mathcal M_{X_{-ij}}}(\widetilde\nabla f(X)I_{ij})I_{ij}^\top\right), \mathcal P_{T_{X_{ij}}\mathcal M_{X_{-ij}}}(\widetilde\nabla f(X)I_{ij})I_{ij}^\top \right\rangle
= \left\| \widetilde \nabla^{C_{ij}}_{\mathcal{M}_{X_{-ij}}} f(X) I_{ij}^\top \right\|_{\mathcal A_X^{-1}}^2.
\end{align*}
By taking $\eta = \widetilde\nabla f(X)$ in~\eqref{eq-cindepInnerProduct}, we obtain~\eqref{eq-cindepNorm2}.
\end{proof}

Our next lemma provides an estimate of the change in the local metrics at successive iterates generated by RSSM.

\begin{lemma}[Metric Comparison] \label{lem-almostNonexpansive}
Suppose that $\{i,j\} \sim \mathrm{Uniform}\binom{[\ell]}{2}$. Given $X \in \Stiefel n p$, $\widetilde\nabla f(X) \in \pd f(X)$, and $\gamma \in (0,\frac{1}{L})$, let $X^{+} \in \Stiefel n p$ be the (unique) matrix satisfying 
$$X^{+}_{ij} = \mathcal P_{\mathcal M_{X_{-ij}}} \left( X_{ij} - \gamma  \widetilde \nabla^{C_{ij}}_{\mathcal{M}_{X_{-ij}}} f(X) \right)$$ 
and $X_{-ij}^{+} = X_{-ij}$. 
Then, for any $Y \in \Stiefel n p$, we have
\begin{align} 
    \textstyle \E&\textstyle \left[\|Y-X^{+}\|_{\mathcal A_{X^{+}}^{-1}}^2\,\right] 
    \leq \E\left[\|Y-X^{+}\|_{\mathcal A_{X}^{-1}}^2\,\right] + 2(\ell-2)\gamma L \|Y-X\|^2 \nonumber\\
    &\textstyle\quad + (\ell-2) \gamma^2 L^2\left(\|Y-X\|^2 + 2\|Y-X\|\right) + (\ell-2) \gamma^3 L^3(\|Y-X\|^2+1). \label{eq-almostNonexpansive}
\end{align}
\end{lemma}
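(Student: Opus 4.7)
The plan is to reduce the Mahalanobis norm $\|\cdot\|_{\mathcal A^{-1}_X}$ to an explicit per-block formula. Starting from the coordinate representation \eqref{eq-scaling-coordrep}, a direct column-block computation shows that for each $k\in[\ell]$,
\[
\mathcal A_X(\xi)_k = \tfrac{2}{\ell}\Bigl(I-\tfrac{\ell-2}{\ell-1}X_{-k}X_{-k}^\top\Bigr)\xi_k.
\]
Since $X_{-k}X_{-k}^\top$ is an orthogonal projector, the identity $(I-\beta P)^{-1}=I+\tfrac{\beta}{1-\beta}P$ gives $\mathcal A_X^{-1}(\xi)_k = \tfrac{\ell}{2}\bigl(I+(\ell-2)X_{-k}X_{-k}^\top\bigr)\xi_k$, and hence the explicit quadratic form
\[
\|\xi\|^2_{\mathcal A_X^{-1}} = \tfrac{\ell}{2}\|\xi\|^2 + \tfrac{\ell(\ell-2)}{2}\sum_{k\in[\ell]}\|X_{-k}^\top\xi_k\|^2.
\]
Applying this at both $X^+$ and $X$ with $\xi=U:=Y-X^+$, the Frobenius piece cancels and the entire difference collapses to $\tfrac{\ell(\ell-2)}{2}\sum_k\langle U_kU_k^\top,\,X^+_{-k}X^{+\top}_{-k}-X_{-k}X_{-k}^\top\rangle$.

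The next step is to exploit the column-sparsity of the update. Setting $\Delta_{k'}=X^+_{k'}-X_{k'}$, line~\ref{line:update} of Algorithm~\ref{alg-RSSM} forces $\Delta_{k'}=0$ for $k'\notin\{i,j\}$, while \eqref{eq-lipslike} and the $L$-Lipschitz continuity of $f$ give $\|\Delta_i\|^2+\|\Delta_j\|^2=\|X^+_{ij}-X_{ij}\|^2\le\gamma^2L^2$. Using $X_{-k}X_{-k}^\top=\sum_{k'\neq k}X_{k'}X_{k'}^\top$ and expanding, the operator difference becomes
\[
X^+_{-k}X^{+\top}_{-k}-X_{-k}X_{-k}^\top = \sum_{k'\in\{i,j\}\setminus\{k\}}\bigl(X_{k'}\Delta_{k'}^\top+\Delta_{k'}X_{k'}^\top+\Delta_{k'}\Delta_{k'}^\top\bigr),
\]
so swapping the order of summation reorganizes the total as $\sum_{k'\in\{i,j\}}\langle U_{-k'}U_{-k'}^\top,\,X_{k'}\Delta_{k'}^\top+\Delta_{k'}X_{k'}^\top+\Delta_{k'}\Delta_{k'}^\top\rangle$, where $U_{-k'}$ denotes the column complement of $U_{k'}$. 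Via the tensor identity $\langle AA^\top,BC^\top\rangle=\langle A^\top B,A^\top C\rangle$ applied with $A=U_{-k'}$, each summand further simplifies to $2\langle U_{-k'}^\top X_{k'},\,U_{-k'}^\top\Delta_{k'}\rangle+\|U_{-k'}^\top\Delta_{k'}\|^2$.

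The final step is to bound these inner products pointwise. The key inputs are (i) $\|U_{-k'}^\top\Delta_{k'}\|\le\|\Delta_{k'}\|\cdot\|U\|\le\gamma L\|U\|$, which follows from $\|\Delta_{k'}\|_{\mathsf{op}}\le\|\Delta_{k'}\|\le\gamma L$; (ii) the Stiefel-orthogonality-based decay $\|U_{-k'}^\top X_{k'}\|\le\|Y_{k'}-X_{k'}\|+\|\Delta_{k''}\|\le\|Y-X\|+\gamma L$ for $\{k',k''\}=\{i,j\}$, obtained by combining $Y_{-k'}^\top Y_{k'}=0$, $X_{k''}^\top X_{k'}=0$, and $X^{+\top}_{k''}X_{k'}=\Delta_{k''}^\top X_{k'}$; and (iii) the triangle-type inequality $\|U\|\le\|Y-X\|+\|\Delta\|\le\|Y-X\|+\gamma L$. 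Combining these via Cauchy--Schwarz produces a pointwise (in $\{i,j\}$) estimate whose right-hand side expands as a sum of terms of the form $(\gamma L)^j\|Y-X\|^s$ for $j\in\{1,2,3\}$ and $s\in\{0,1,2\}$, matching the right-hand side of \eqref{eq-almostNonexpansive}. Taking expectations over $\{i,j\}\sim\mathrm{Uniform}\binom{[\ell]}{2}$ is then immediate since the bound holds for every realization. The principal obstacle is the bookkeeping in this last step: distributing the cross-term contributions so that the $(\ell-2)$ prefactor inherited from the quadratic form survives cleanly and the mixed polynomials in $\gamma L$ and $\|Y-X\|$ collect into the precise form stated in the lemma.
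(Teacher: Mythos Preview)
Your explicit block formula $\|\xi\|_{\mathcal A_X^{-1}}^{2}=\tfrac{\ell}{2}\|\xi\|^{2}+\tfrac{\ell(\ell-2)}{2}\sum_{k}\|X_{-k}^\top\xi_k\|^{2}$ is correct and gives a cleaner starting point than the paper's operator $\Psi_X$; the column-sparsity reduction to $\sum_{k'\in\{i,j\}}\bigl(2\langle U_{-k'}^\top X_{k'},U_{-k'}^\top\Delta_{k'}\rangle+\|U_{-k'}^\top\Delta_{k'}\|^{2}\bigr)$ and your Stiefel-orthogonality bound (ii) are also valid. The gap is in the last paragraph: the pointwise estimate does \emph{not} match the right-hand side of \eqref{eq-almostNonexpansive}, because the prefactor you carry from the quadratic form is $\tfrac{\ell(\ell-2)}{2}$, not $(\ell-2)$. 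Concretely, your leading term is
\[
\tfrac{\ell(\ell-2)}{2}\cdot 2\sum_{k'\in\{i,j\}}\|U_{-k'}^\top X_{k'}\|\,\|U_{-k'}^\top\Delta_{k'}\|
\;\le\;\tfrac{\ell(\ell-2)}{2}\cdot 4(\|Y-X\|+\gamma L)\cdot \gamma L\cdot(\|Y-X\|+\gamma L),
\]
whose dominant contribution is $2\ell(\ell-2)\gamma L\|Y-X\|^{2}$, an extra factor of $\ell$ away from the stated $2(\ell-2)\gamma L\|Y-X\|^{2}$. Since your bound is uniform in $\{i,j\}$, taking expectation ``immediately'' cannot remove this factor.

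The paper recovers the missing $\tfrac{2}{\ell}$ precisely by \emph{not} bounding pointwise: it keeps the random quantities $\|\widetilde\nabla^{C_{ij}}_{\mathcal M_{X_{-ij}}}f(X)\|$ and $\|X_{ij}^\top(Y-X)\|$ inside the expectation, converts the former to the $\mathcal A_X^{-1}$-norm (gaining $\sqrt{2/\ell}$ via Lemma~\ref{lem-selfadjoint}), and then invokes \eqref{eq-cindepNorm2} together with $\E[\|X_{ij}^\top(Y-X)\|^{2}]=\tfrac{2}{\ell}\|X^\top(Y-X)\|^{2}$ and Cauchy--Schwarz in the expectation. In your framework the analogous fix would be to retain $\|\Delta_{ij}\|$ and $\|Y_{ij}-X_{ij}\|$ as random and use $\E[\|\Delta_{ij}\|^{2}]\le\gamma^{2}\E[\|\widetilde\nabla^{C_{ij}}_{\mathcal M_{X_{-ij}}}f(X)\|^{2}]\le\tfrac{2}{\ell}\gamma^{2}L^{2}$ (a consequence of \eqref{eq-cindepNorm2} and Lemma~\ref{lem-selfadjoint}) together with $\E[\|Y_{ij}-X_{ij}\|^{2}]=\tfrac{2}{\ell}\|Y-X\|^{2}$, combined via Cauchy--Schwarz. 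Without some such averaging step, your argument proves only a weaker inequality with $\ell(\ell-2)$ in place of $(\ell-2)$.
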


Before proving Lemma~\ref{lem-almostNonexpansive}, we present a lemma that characterizes the operator norm of the difference between two orthogonal projections, whose proof can be found in~\cite[Theorem 2.5.1]{golub2013matrix}. 

\begin{lemma} \label{lem-diffinprojections}
Let $X, Y \in \Stiefel n p$ and $X_\perp, Y_\perp \in \Stiefel n {n-p}$ be such that $[X\ X_\perp]$, $[Y\ Y_\perp] \in \Stiefel{n}{n}$. Then, we have $\|XX^\top-YY^\top\|_{\mathsf{op}} = \|X_\perp^\top Y\|_{\mathsf{op}} = \|Y_\perp^\top X\|_{\mathsf{op}}$.
\end{lemma}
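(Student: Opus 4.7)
The plan is to reduce all three quantities in the lemma to a common value, namely the largest singular value of the off-diagonal block $X_\perp^\top Y$ of the orthogonal matrix $[X\ X_\perp]^\top [Y\ Y_\perp]$. The main obstacle is establishing $\|X_\perp^\top Y\|_{\mathsf{op}} = \|Y_\perp^\top X\|_{\mathsf{op}}$, since this is the only nontrivial consequence of $X, X_\perp$ and $Y, Y_\perp$ forming complementary blocks of an orthogonal matrix; the remaining steps are essentially bookkeeping.

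First, I would establish the algebraic identity
\[ XX^\top - YY^\top = XX^\top Y_\perp Y_\perp^\top - X_\perp X_\perp^\top YY^\top, \]
obtained by expanding $XX^\top = XX^\top(YY^\top + Y_\perp Y_\perp^\top)$, expanding $YY^\top = (XX^\top + X_\perp X_\perp^\top)YY^\top$, and cancelling the common term $XX^\top YY^\top$. The crucial feature of this decomposition is that for every $v \in \R^n$, the two images $XX^\top Y_\perp Y_\perp^\top v$ and $X_\perp X_\perp^\top YY^\top v$ lie in the orthogonal subspaces $\mathrm{range}(X)$ and $\mathrm{range}(X_\perp)$, respectively, so the Pythagorean theorem yields
\[ \|(XX^\top - YY^\top)v\|^2 = \|X^\top Y_\perp Y_\perp^\top v\|^2 + \|X_\perp^\top YY^\top v\|^2. \]

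Second, I would prove the singular value identity $\|X^\top Y_\perp\|_{\mathsf{op}} = \|X_\perp^\top Y\|_{\mathsf{op}} = \|Y_\perp^\top X\|_{\mathsf{op}}$. Substituting $I = XX^\top + X_\perp X_\perp^\top$ into $I = Y^\top Y$ gives $(X^\top Y)^\top(X^\top Y) + (X_\perp^\top Y)^\top(X_\perp^\top Y) = I$, so the squared singular values of $X^\top Y$ and $X_\perp^\top Y$ are complementary in the sense that $\sigma_i(X^\top Y)^2 + \sigma_i(X_\perp^\top Y)^2 = 1$; a symmetric computation starting from $I = X^\top X$ does the same for $Y^\top X$ and $Y_\perp^\top X$. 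Combining these with the transpose relations $\sigma_i(X^\top Y) = \sigma_i(Y^\top X)$ and $\sigma_i(X^\top Y_\perp) = \sigma_i(Y_\perp^\top X)$ forces $\sigma_i(X_\perp^\top Y) = \sigma_i(Y_\perp^\top X)$ for every $i$, and taking maxima delivers the claim. Denote this common value by $\sigma$.

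Finally, I would combine the two ingredients. Bounding each summand of the Pythagorean split by $\sigma^2 \|Y_\perp^\top v\|^2$ and $\sigma^2 \|Y^\top v\|^2$ respectively and summing gives $\|(XX^\top - YY^\top)v\|^2 \le \sigma^2 \|v\|^2$, hence $\|XX^\top - YY^\top\|_{\mathsf{op}} \le \sigma$. For the matching lower bound, I would take a right singular vector $u$ of $X_\perp^\top Y$ with $\|X_\perp^\top Y u\| = \sigma$, set $v = Yu$ (so $\|v\| = 1$ because $Y \in \Stiefel{n}{p}$), and use $Y^\top Y = I$ to compute $(XX^\top - YY^\top)Yu = XX^\top Yu - Yu = -X_\perp X_\perp^\top Yu$, whose norm equals $\|X_\perp^\top Yu\| = \sigma$. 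The two inequalities together give $\|XX^\top - YY^\top\|_{\mathsf{op}} = \sigma = \|X_\perp^\top Y\|_{\mathsf{op}} = \|Y_\perp^\top X\|_{\mathsf{op}}$.
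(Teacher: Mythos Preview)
Your proof is correct and self-contained. The paper does not actually prove this lemma; it merely cites \cite[Theorem~2.5.1]{golub2013matrix}, whose standard argument proceeds via the CS decomposition and the characterization of the distance between subspaces through principal angles. Your route avoids the CS decomposition entirely: you split $XX^\top - YY^\top$ into two pieces lying in the orthogonal ranges $\mathrm{range}(X)$ and $\mathrm{range}(X_\perp)$, use Pythagoras to bound $\|(XX^\top - YY^\top)v\|$ from above and below, and establish $\|X_\perp^\top Y\|_{\mathsf{op}} = \|Y_\perp^\top X\|_{\mathsf{op}}$ via the complementarity relation $(X^\top Y)^\top(X^\top Y) + (X_\perp^\top Y)^\top(X_\perp^\top Y) = I_p$. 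This is more elementary and arguably more transparent for the present purpose. One cosmetic point: your phrasing ``$\sigma_i(X^\top Y)^2 + \sigma_i(X_\perp^\top Y)^2 = 1$'' is slightly imprecise about the index $i$, since the ordering is reversed between the two lists of singular values; what you actually obtain is that the multiset of squared singular values of $X_\perp^\top Y$ is $\{1-\sigma^2 : \sigma \text{ a singular value of } X^\top Y\}$, and since $X^\top Y$ and $Y^\top X$ share the same singular values, the conclusion follows. The argument is unaffected.
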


\begin{proof}[Proof of Lemma \ref{lem-almostNonexpansive}]
Consider the linear operator $\Psi:\R^{n \times p} \rightarrow \R^{n \times p}$ defined as $$\Psi_X(\xi) = X\left[\left(J-I\right) \boxdot (X^\top \xi)\right].$$ It is easily seen that $\Psi_X$ is self-adjoint. By \eqref{eq-diffscaling1}, for any $Y \in \Stiefel{n}{p}$, we have
\begin{align*}
&\|Y-X^{+}\|_{\mathcal A_{X^{+}}^{-1}}^2 - \|Y-X^{+}\|_{\mathcal A_{X}^{-1}}^2\\
&= \langle\,\mathcal A_{X^{+}}^{-1}(Y-X^{+}), Y-X^{+}\rangle - \langle\,\mathcal A_{X}^{-1}(Y-X^{+}), Y-X^{+}\rangle\\ 
&\textstyle= \frac{\ell(\ell-2)}{2}\,\langle\,(\Psi_{X^{+}} - \Psi_X)(Y-X^{+}), Y-X^{+}\,\rangle\\
&\textstyle= \frac{\ell(\ell-2)}{2}\,\left(\langle\,(\Psi_{X^{+}} - \Psi_X)(Y-X), Y-X\,\rangle\right.\\ 
&\qquad \left.\ \ +\, 2\,\langle\,(\Psi_{X^{+}} - \Psi_X)(X-X^{+}), Y-X\,\rangle + \langle\,(\Psi_{X^{+}} - \Psi_X)(X-X^{+}), X-X^{+}\,\rangle\right).
\end{align*}
Now, for any $\xi \in \R^{n\times p}$, we have
\begin{align} \label{eq-diffPsi}
(\Psi_{X^{+}}-\Psi_X)(\xi) =(X_i^{+}X_i^{+}{}^\top - X_i X_i^\top)\xi_{-i}I_{-i}^\top + (X_j^{+}X_j^{+}{}^\top - X_j X_j^\top)\xi_{-j}I_{-j}^\top
\end{align}
and, using~\eqref{eq-lipslike},
\begin{align*}
&\langle\,(\Psi_{X^{+}}-\Psi_X)(\xi), \xi\,\rangle = \left(\|X_i^{+}{}^\top \xi_{-i}\|^2 - \|X_{i}^\top \xi_{-i}\|^2\right) + \left(\|X_j^{+}{}^\top \xi_{-j}\|^2 - \|X_{j}^\top \xi_{-j}\|^2\right)\\
&= \langle\,(X_i^{+}+X_i)^\top \xi_{-i}, (X_i^{+}-X_i)^\top \xi_{-i}\,\rangle + \langle\,(X_j^{+}+X_j)^\top \xi_{-j}, (X_j^{+}-X_j)^\top \xi_{-j}\,\rangle\\
&\leq \|(X_i^{+}+X_i)^\top \xi_{-i}\| \cdot \|X_i^{+}-X_i\| \cdot \|\xi_{-i}\| + \|(X_j^{+}+X_j)^\top \xi_{-j}\| \cdot \|X_j^{+}-X_j\| \cdot \|\xi_{-j}\|\\
&\leq \|(X_i^{+}+X_i)^\top \xi\| \cdot \|X_i^{+}-X_i\| \cdot \|\xi\| + \|(X_j^{+}+X_j)^\top \xi\| \cdot \|X_j^{+}-X_j\| \cdot \|\xi\|\\
&\leq \|(X_{ij}^{+}+X_{ij})^\top \xi\| \cdot \|X_{ij}^{+}-X_{ij}\| \cdot \|\xi\| 
\leq \gamma \left\|\widetilde \nabla^{C_{ij}}_{\mathcal{M}_{X_{-ij}}} f(X)\right\| \cdot \|(X_{ij}^{+}+X_{ij})^\top \xi\| \cdot \|\xi\|.
\end{align*}
Since $\gamma < \frac{1}{L}$, the second-order boundedness condition \eqref{eq-secondbdd} implies that
\begin{align*}
\|(X_{ij}^{+}+X_{ij})^\top \xi\| &= \left\| \left( 2X_{ij} - \gamma \widetilde \nabla^{C_{ij}}_{\mathcal{M}_{X_{-ij}}} f(X) + X_{ij}^{+} - X_{ij} + \gamma \widetilde \nabla^{C_{ij}}_{\mathcal{M}_{X_{-ij}}} f(X) \right)^\top \xi \right\| \\ 
&\leq 2\| X_{ij}^\top\xi \| + \gamma \left\|\widetilde \nabla^{C_{ij}}_{\mathcal{M}_{X_{-ij}}} f(X)\right\| \cdot \|\xi\| + \gamma^2 \left\|\widetilde \nabla^{C_{ij}}_{\mathcal{M}_{X_{-ij}}} f(X)\right\|^2 \cdot \|\xi\|.
\end{align*}
It follows that 
\begin{align}
\langle\,(\Psi_{X^{+}}-\Psi_X)(\xi), \xi\,\rangle &\leq 2\gamma \left\|\widetilde \nabla^{C_{ij}}_{\mathcal{M}_{X_{-ij}}} f(X)\right\| \cdot \|X_{ij}^\top\xi\| \cdot \|\xi\| + \gamma^2 \left\|\widetilde \nabla^{C_{ij}}_{\mathcal{M}_{X_{-ij}}} f(X)\right\|^2 \cdot \|\xi\|^2 \nonumber \\
&\qquad+ \gamma^3 \left\|\widetilde \nabla^{C_{ij}}_{\mathcal{M}_{X_{-ij}}} f(X)\right\|^3 \cdot \|\xi\|^2. \label{eq-diffPsi1}
\end{align}
By applying \eqref{eq-diffPsi} with $\xi = X-X^{+}$ and using Lemma \ref{lem-diffinprojections}, we get
\begin{align}
&\|(\Psi_{X^{+}} - \Psi_X)(X-X^{+})\| \leq \|X_{ij}^{+}X_{ij}^{+}{}^\top - X_{ij}X_{ij}^\top\|_{\mathsf{op}} \cdot \|X-X^{+}\| \nonumber \\
&\leq \|(I-X_{ij}X_{ij}^\top)X_{ij}^{+}\|_{\mathsf{op}} \cdot  \|X_{ij}-X_{ij}^{+}\|
= \|(I-X_{ij}X_{ij}^\top)(X_{ij}^{+}-X_{ij})\|_{\mathsf{op}} \cdot \|X_{ij}-X_{ij}^{+}\|\nonumber\\ 
&\leq \|X_{ij}-X_{ij}^{+}\|^2 \leq \gamma^2 \left\|\widetilde \nabla^{C_{ij}}_{\mathcal{M}_{X_{-ij}}} f(X)\right\|^2. \label{eq-diffPsi2}
\end{align}
By taking $\xi = Y-X$ in \eqref{eq-diffPsi1} and combining the result with \eqref{eq-diffPsi2}, we obtain
\begin{equation}\label{ineq:recursion_1}
    \begin{split}
        &\|Y-X^{+}\|_{\mathcal A_{X^{+}}^{-1}}^2 - \|Y-X^{+}\|_{\mathcal A_{X}^{-1}}^2\\
        &\textstyle\leq \frac{\ell(\ell-2)}{2}\,\left(\langle\,(\Psi_{X^{+}} - \Psi_X)(Y-X), Y-X\,\rangle\right.\\ 
        &\  \left. +\, 2\,\|(\Psi_{X^{+}} - \Psi_X)(X-X^{+})\| \cdot \|Y-X\| + \| (\Psi_{X^{+}} - \Psi_X)(X-X^{+})\| \cdot \| X-X^{+}\|\right)\\
        &\textstyle\leq \frac{\ell(\ell-2)}{2}\,\left(2\gamma \left\| \widetilde \nabla^{C_{ij}}_{\mathcal{M}_{X_{-ij}}} f(X) \right\| \cdot \|X_{ij}^\top (Y-X)\| \cdot \|Y-X\| \right. \\
        &\textstyle\qquad\qquad\quad + \gamma^2 \left\|\widetilde \nabla^{C_{ij}}_{\mathcal{M}_{X_{-ij}}} f(X) \right\|^2 \cdot \|Y-X\|^2 + \gamma^3 \left\|\widetilde \nabla^{C_{ij}}_{\mathcal{M}_{X_{-ij}}} f(X)\right\|^3 \cdot \|Y-X\|^2 \\ 
        &\textstyle\qquad\qquad\quad \left.+\, 2\gamma^2 \left\| \widetilde \nabla^{C_{ij}}_{\mathcal{M}_{X_{-ij}}} f(X) \right\|^2 \cdot \|Y-X\| + \gamma^3 \left\| \widetilde \nabla^{C_{ij}}_{\mathcal{M}_{X_{-ij}}} f(X) \right\|^3\right)\\
        &\textstyle\leq (\ell-2)\left(2\gamma \left\| \widetilde \nabla^{C_{ij}}_{\mathcal{M}_{X_{-ij}}} f(X)I_{ij}^\top \right\|_{\mathcal A_{X}^{-1}} \cdot \sqrt{\frac{\ell}{2}}\|X_{ij}^\top (Y-X)\| \cdot \|Y-X\| \right.\\
        &\textstyle\qquad\qquad\quad\, + \gamma^2 (1+\gamma L) \left\|\widetilde \nabla^{C_{ij}}_{\mathcal{M}_{X_{-ij}}} f(X)I_{ij}^\top \right\|_{\mathcal A_{X}^{-1}}^2 \cdot \|Y-X\|^2\\
        &\textstyle\qquad\qquad\quad\, \left. + 2\gamma^2 \left\| \widetilde \nabla^{C_{ij}}_{\mathcal{M}_{X_{-ij}}} f(X)I_{ij}^\top \right\|_{\mathcal A_{X}^{-1}}^2 \cdot \|Y-X\| + \gamma^3L \left\| \widetilde \nabla^{C_{ij}}_{\mathcal{M}_{X_{-ij}}} f(X)I_{ij}^\top \right\|_{\mathcal A_{X}^{-1}}^2 \right), 
    \end{split}
\end{equation}
where the last inequality follows from Lemma~\ref{lem-selfadjoint} and the fact that $\left\|\widetilde \nabla^{C_{ij}}_{\mathcal{M}_{X_{-ij}}} f(X)\right\| \le L$. Now, we take expectation on both sides of the above inequality with respect to $\{i,j\} \sim \mathrm{Uniform}\binom{[\ell]}{2}$. Observe that
\begin{align*}
\textstyle \E\left[\|X_{ij}^\top (Y-X)\|^2\right] 
&\textstyle= \frac{1}{\binom{\ell}{2}}  \sum\limits_{\{i,j\} \in \binom{[\ell]}{2}} 
\left(\|X_{i}^\top (Y-X)\|^2 + \|X_{j}^\top (Y-X)\|^2\right) 
= \frac{2}{\ell} \|X^\top (Y-X)\|^2.
\end{align*}
Moreover, by the Cauchy--Schwarz inequality, we get
\begin{align*}
&\textstyle\E\left[\left\| \widetilde \nabla^{C_{ij}}_{\mathcal{M}_{X_{-ij}}} f(X)I_{ij}^\top \right\|_{\mathcal A_{X}^{-1}} \cdot \sqrt{\frac{\ell}{2}}\|X_{ij}^\top (Y-X)\| \right]\\ 
&\textstyle\leq \sqrt{\E\left[\left\| \widetilde \nabla^{C_{ij}}_{\mathcal{M}_{X_{-ij}}} f(X)I_{ij}^\top \right\|_{\mathcal A_{X}^{-1}}^2\right]} \cdot \sqrt{\frac{\ell}{2} \E\left[\|X_{ij}^\top (Y-X)\|^2\right]}. 
\end{align*}
This, together with~\eqref{eq-cindepNorm2},~\eqref{ineq:recursion_1}, and the fact that $\|\widetilde\nabla_{\Stiefel n p} f(X)\| \le L$, yields
\begin{align*}
\textstyle \E\left[\|Y-X^{+}\|_{\mathcal A_{X^{+}}^{-1}}^2\,\right]
&\textstyle\leq \E\left[\|Y-X^{+}\|_{\mathcal A_{X}^{-1}}^2\,\right] + 2(\ell-2) \gamma  \|\widetilde\nabla_{\Stiefel n p}f(X)\| \cdot \|X^\top (Y-X)\| \cdot \|Y-X\| \\
&\textstyle\quad + 
(\ell-2)\gamma^2(1+\gamma L)\|\widetilde\nabla_{\Stiefel n p}f(X)\|^2\cdot\|Y-X\|^2 \\
&\textstyle \quad + 2(\ell-2)\gamma^2 \|\widetilde\nabla_{\Stiefel n p}f(X)\|^2\cdot\|Y-X\| + (\ell-2) \gamma^3L \|\widetilde\nabla_{\Stiefel n p}f(X)\|^2 \\
&\textstyle\leq \E\left[\|Y-X^{+}\|_{\mathcal A_{X}^{-1}}^2\,\right] + 2(\ell-2)\gamma L \|Y-X\|^2 \\
&\textstyle\quad + (\ell-2) \gamma^2 L^2\left(\|Y-X\|^2 + 2\|Y-X\|\right) + (\ell-2) \gamma^3 L^3(\|Y-X\|^2+1),
\end{align*}
as desired.
\end{proof}

\subsection{Adaptive Moreau Envelope, Adaptive Proximal Map, and Surrogate Stationarity Measure}

For nonsmooth optimization over a compact embedded submanifold, the work \cite{li2021weakly} defines Riemannian analogues of the Moreau envelope and proximal map to facilitate the convergence analysis of Riemannian subgradient-type methods. 
However, these constructs are not well suited for capturing the effects of the random submanifold block updates in our proposed RSSM. To address this difficulty, we introduce adaptive versions of the Moreau envelope and proximal map in~\cite{li2021weakly}, in which we use the norm induced by the inverse of the averaging operator to measure proximity.
Specifically, for Problem~\eqref{mainpb}, given the partition $\mathfrak C$ of $[p]$, we define the \emph{adaptive Moreau envelope} and \emph{adaptive proximal map} at $X\in \Stiefel n p$ as
\begin{equation*}
    f_\lambda^{\mathfrak C}(X) = \min_{Y \in \Stiefel n p}\,\left\{\,f(Y) + \textstyle{\frac{1}{2\lambda}}\,\|Y-X\|_{\mathcal A_X^{-1}}^2\,\right\}
\end{equation*}
and
\begin{equation}
    P_{\lambda f}^{\mathfrak C}(X) = \argmin_{Y \in \Stiefel n p}\,\left\{\,f(Y) + \textstyle{\frac{1}{2\lambda}}\,\|Y-X\|_{\mathcal A_X^{-1}}^2\,\right\},\label{eq-bregmanproximal}
\end{equation}
respectively. As we shall see, the above constructs allow us to use Lemma~\ref{lem-cindep} to study the average effects of the random submanifold block updates in RSSM. We first show that for small $\lambda>0$, the adaptive proximal map $P_{\lambda f}^{\mathfrak C}$ is single-valued and satisfies a Lipschitz-like property; cf.~\cite[Lemma 4.2]{wang2023decentralized}.

\begin{lemma} \label{lem-proxlips}
Consider the setting of Lemma~\ref{lem-almostNonexpansive}. If $\lambda \in (0, \frac{\ell}{2(\tau + {(2\ell-1)L})})$, then the adaptive proximal map $P_{\lambda f}^{\mathfrak C}$ is single-valued and satisfies 
\begin{equation}
    \| P_{\lambda f}^{\mathfrak C} (X) - P_{\lambda f}^{\mathfrak C}(X^+)\| \le \tfrac{\binom{\ell}{2}}{\frac{\ell}{2}-\lambda(\tau + (2\ell-1)L)} \| X - X^+ \|. \label{eq-lipslike2}
\end{equation}
\end{lemma}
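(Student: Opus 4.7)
The plan is to derive a single Riemannian strong-convexity-type inequality for $F_X(\cdot) := f(\cdot) + \tfrac{1}{2\lambda}\|\cdot - X\|^2_{\mathcal{A}_X^{-1}}$ on the Stiefel manifold,
\[
F_X(Y) \ge F_X(Y^*) + \tfrac{\alpha}{2}\|Y-Y^*\|^2 \quad \text{for all } Y \in \Stiefel n p,
\]
with modulus $\alpha := \tfrac{\ell}{2\lambda} - \tau - (2\ell-1)L > 0$ and $Y^* \in P_{\lambda f}^{\mathfrak C}(X)$, and to extract both single-valuedness and the Lipschitz-like estimate from it. The starting point is the pair of spectral bounds $\tfrac{\ell}{2}I \preceq \mathcal{A}_X^{-1} \preceq \binom{\ell}{2}I$, which I would read off from the closed form $\mathcal{A}_X^{-1} = \tfrac{\ell}{2}I + \tfrac{\ell(\ell-2)}{2}\Psi_X$ underpinning \eqref{eq-diffscaling1}, together with $0 \preceq \Psi_X \preceq I$ (self-adjointness plus $\langle \Psi_X \xi, \xi\rangle = \|\mathcal E(X^\top\xi)\|^2$). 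These bounds show that $F_X$ is $(\tfrac{\ell}{2\lambda} - \tau)$-strongly convex on $\R^{n \times p}$ in the Euclidean sense.

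To obtain the displayed inequality, I would apply Lemma~\ref{lem-RSI} to $F_X$ on the $1$-proximally smooth Stiefel manifold. The key technical sub-step is bounding a normal-space Euclidean subgradient $\widetilde\nabla F_X(Y^*) \in N_{Y^*}\Stiefel n p$, which exists by first-order optimality, by $(2\ell-1)L$. The argument is a short chain: optimality $F_X(Y^*) \le F_X(X) = f(X)$ combined with the $L$-Lipschitz continuity of $f$ gives $\tfrac{1}{2\lambda}\|Y^*-X\|^2_{\mathcal{A}_X^{-1}} \le L\|Y^*-X\|$; the spectral lower bound then yields $\|Y^*-X\| \le \tfrac{4\lambda L}{\ell}$; and the triangle inequality applied to $\widetilde\nabla F_X(Y^*) = \widetilde\nabla f(Y^*) + \tfrac{1}{\lambda}\mathcal{A}_X^{-1}(Y^*-X)$, together with the spectral upper bound on $\mathcal{A}_X^{-1}$, produces $\|\widetilde\nabla F_X(Y^*)\| \le L + \tfrac{\binom{\ell}{2}}{\lambda} \cdot \tfrac{4\lambda L}{\ell} = (2\ell-1)L$. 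Feeding this bound and the zero Riemannian subgradient at $Y^*$ into Lemma~\ref{lem-RSI} (with weak-convexity constant $\tau - \tfrac{\ell}{2\lambda}$ and proximal-smoothness radius $1$) yields the strong-convexity inequality, and single-valuedness of $P_{\lambda f}^{\mathfrak C}$ follows at once, since two putative minimizers would force $\|Y_1^*-Y_2^*\|^2 \le 0$.

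For the Lipschitz-like estimate, let $Y^+ \in P_{\lambda f}^{\mathfrak C}(X^+)$. I would apply the strong-convexity inequality twice --- at $X$ with test point $Y^+$, and at $X^+$ with test point $Y^*$ --- and add; the $f$-values cancel, yielding
\[
\alpha \|Y^+ - Y^*\|^2 \le \tfrac{1}{2\lambda} \left[ (\|Y^+-X\|^2_{\mathcal{A}_X^{-1}} - \|Y^*-X\|^2_{\mathcal{A}_X^{-1}}) + (\|Y^*-X^+\|^2_{\mathcal{A}_{X^+}^{-1}} - \|Y^+-X^+\|^2_{\mathcal{A}_{X^+}^{-1}}) \right].
\]
Dividing through by $\|Y^+-Y^*\|$, the target constant $\tfrac{\binom{\ell}{2}}{\alpha \lambda} = \tfrac{\binom{\ell}{2}}{\ell/2 - \lambda(\tau+(2\ell-1)L)}$ will emerge provided the right-hand side is bounded by $\tfrac{\binom{\ell}{2}}{\lambda}\|X^+-X\|\|Y^+-Y^*\|$. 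The main obstacle is precisely this bound: in the same-metric case, the standard identity $\|u\|^2_{\mathcal{D}} - \|v\|^2_{\mathcal{D}} = \langle \mathcal{D}(u+v), u-v\rangle$ collapses the right-hand side to $\tfrac{1}{\lambda}\langle Y^+-Y^*, X^+-X\rangle_{\mathcal{A}^{-1}}$, controlled cleanly by Cauchy--Schwarz and the spectral upper bound; with two distinct metrics, a residual involving $\mathcal{A}_X^{-1} - \mathcal{A}_{X^+}^{-1} = \tfrac{\ell(\ell-2)}{2}(\Psi_X - \Psi_{X^+})$ appears, and controlling it requires the operator-norm estimate $\|\Psi_X - \Psi_{X^+}\|_{\mathsf{op}} = O(\|X-X^+\|)$ implicit in the proof of Lemma~\ref{lem-almostNonexpansive}, together with the RSSM-specific structure $X^+_{-ij} = X_{-ij}$ and $\|X - X^+\| \le \gamma L < 1$, so that higher-order residuals can be absorbed into the stated constant.
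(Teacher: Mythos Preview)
Your argument for single-valuedness is essentially the paper's: the bound $\|Y^*-X\|\le \tfrac{4\lambda L}{\ell}$, the subgradient estimate $\|\widetilde\nabla F_X(Y^*)\|\le (2\ell-1)L$, and the application of Lemma~\ref{lem-RSI} at a point with zero Riemannian subgradient all match. The high-level strategy for the Lipschitz estimate (add the two strong-convexity inequalities) is also the same.

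The gap is in the last paragraph. Your ``same-metric plus residual'' split already exhausts the constant: bounding $\tfrac{1}{\lambda}\langle Y^+-Y^*,X^+-X\rangle_{\mathcal A_X^{-1}}$ by Cauchy--Schwarz and $\|\mathcal A_X^{-1}\|_{\mathsf{op}}\le \binom{\ell}{2}$ uses up the full $\tfrac{\binom{\ell}{2}}{\lambda}\|X^+-X\|\,\|Y^+-Y^*\|$, leaving no room for the residual. And the residual is not higher order: writing it as $\tfrac{1}{2\lambda}\langle(\mathcal A_{X^+}^{-1}-\mathcal A_X^{-1})(Y^*+Y^+-2X^+),\,Y^*-Y^+\rangle$ and applying only $\|\Psi_X-\Psi_{X^+}\|_{\mathsf{op}}=O(\|X-X^+\|)$ produces a term of the \emph{same} order, $c\,\ell(\ell-2)\|X^+-X\|\,\|Y^+-Y^*\|$ with $c>0$, so it cannot be absorbed into the stated constant. (If you instead pair the operator norm against $\|Y^++Y^*\|$ you pick up a $\sqrt p$ factor, which is worse.) The comment about $\|X-X^+\|<1$ and ``higher-order residuals'' is therefore aimed at the wrong difficulty.

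The paper avoids this by a different decomposition. First, it uses the eigenvector identity $\mathcal A_X^{-1}(X)=\tfrac{\ell}{2}X$ and $\mathcal A_{X^+}^{-1}(X^+)=\tfrac{\ell}{2}X^+$ so that, after expanding the four Mahalanobis squares, the cross terms collapse to $\tfrac{\ell}{2\lambda}\langle Y^+-Y^*,X^+-X\rangle$ with coefficient $\tfrac{\ell}{2}$ rather than $\binom{\ell}{2}$. Second, the remaining metric-difference piece is exactly $\|Y^+\|^2_{\mathcal A_X^{-1}}-\|Y^+\|^2_{\mathcal A_{X^+}^{-1}}-\big(\|Y^*\|^2_{\mathcal A_X^{-1}}-\|Y^*\|^2_{\mathcal A_{X^+}^{-1}}\big)$, and this is handled by Lemma~\ref{lem-almostNonexpansive2} (equation~\eqref{eq-almostNonexpansive2}), which crucially exploits that $Y^*,Y^+\in\Stiefel n p$ (so $Y^*_{-i}Y^{*\top}_{-i}$ and $Y^+_{-i}Y^{+\top}_{-i}$ are orthogonal projectors) to get the dimension-free bound $\tfrac{\ell(\ell-2)}{\sqrt2}\|X^+-X\|\,\|Y^+-Y^*\|$. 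The two pieces then sum to at most $\tfrac{\ell(\ell-1)}{2\lambda}\|X^+-X\|\,\|Y^+-Y^*\|=\tfrac{1}{\lambda}\binom{\ell}{2}\|X^+-X\|\,\|Y^+-Y^*\|$. Neither the eigenvector identity nor the Stiefel structure of the \emph{proximal points} appears in your outline, and both are needed to land on the stated constant.
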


\begin{proof}
Let $h_X:\R^{n \times p} \rightarrow \R$ be the function defined as $h_X(Z) = f(Z) + \frac{1}{2\lambda}\|Z-X\|_{\mathcal A_X^{-1}}^2$. By Lemma~\ref{lem-selfadjoint}, when $\lambda < \tfrac{\ell}{2\tau}$, the function $h_X$ is $\left(\frac{\ell}{2\lambda} - \tau\right)$-strongly convex. Moreover, for any $Z \in \R^{n \times p}$, we have $\pd h_X(Z) = \pd f(Z) + \frac{1}{\lambda}\,\mathcal A_X^{-1}(Z-X)$.

If $Z \in P_{\lambda f}^{\mathfrak C}(X) = \argmin_{Z' \in \Stiefel n p} h_X(Z')$, then $h_X(Z) = f(Z) + \frac{1}{2\lambda} \|Z-X\|_{\mathcal A_X^{-1}}^2 \leq f(X)$. This, together with Lemma~\ref{lem-selfadjoint}, implies that $\frac{\ell}{2} \cdot \frac{1}{2\lambda} \|Z-X\|^2 \leq \frac{1}{2\lambda} \|Z-X\|_{\mathcal A_X^{-1}}^2 \leq f(X) - f(Z) \leq L\,\|Z-X\|$, or equivalently, $\|Z-X\| \leq \frac{4\lambda L}{\ell}$. Hence, we have
\begin{align}
\|\widetilde\nabla h_X(Z)\| 
&\textstyle\leq \|\widetilde\nabla f(Z)\| + \frac{1}{\lambda} \|\mathcal A_X^{-1}\|_{\mathsf{op}} \cdot \|Z-X\| \leq L + \frac{1}{\lambda} \binom{\ell}{2} \frac{4\lambda L}{\ell}= (2\ell-1) L\label{eq:hX-subgrad-bd}
\end{align}
for all $\widetilde\nabla f(Z) \in \pd f(Z)$ and $\widetilde\nabla h_X(Z) = \widetilde\nabla f(Z) + \frac{1}{\lambda}\,\mathcal A_X^{-1}(Z-X) \in \pd h_X(Z)$.

Now, let $Y,Z \in P_{\lambda f}^{\mathfrak C}(X)$ be arbitrary. The first-order optimality condition of~\eqref{eq-bregmanproximal} implies that $0 \in \pd_{\Stiefel{n}{p}} h_X(Y)$ and $0 \in \pd_{\Stiefel{n}{p}} h_X(Z)$. This, together with the Riemannian subgradient inequality~\eqref{eq-RSIv1}, the subgradient bound~\eqref{eq:hX-subgrad-bd}, and the 1-proximal smoothness of $\Stiefel{n}{p}$, yields
\begin{align*}
h_X(Y) &\textstyle\geq h_X(Z) + \frac{\frac{\ell}{2\lambda}-\tau - (2\ell-1)L}{2}\|Y-Z\|^2, \\ 
h_X(Z) &\textstyle\geq h_X(Y) + \frac{\frac{\ell}{2\lambda}-\tau - (2\ell-1)L}{2}\|Y-Z\|^2.
\end{align*}
It follows that when $\lambda < \frac{\ell}{2(\tau + (2\ell-1)L)}$, we have $\|Y-Z\|^2 \leq 0$, i.e., $Y=Z$. This shows that $P_{\lambda f}^{\mathfrak C}$ is single-valued.

Using the Riemannian subgradient inequality~\eqref{eq-RSIv1} again, we have
\begin{align*}
h_X(P_{\lambda f}^{\mathfrak C}(X^{+})) &\textstyle
\geq h_X(P_{\lambda f}^{\mathfrak C}(X)) + \frac{\frac{\ell}{2\lambda}-\tau-(2\ell-1)L}{2} \|P_{\lambda f}^{\mathfrak C}(X^{+})-P_{\lambda f}^{\mathfrak C}(X)\|^2,\\
h_{X^{+}}(P_{\lambda f}^{\mathfrak C}(X)) &\textstyle
\geq h_{X^{+}}(P_{\lambda f}^{\mathfrak C}(X^{+})) + \frac{\frac{\ell}{2\lambda}-\tau-(2\ell-1)L}{2} \|P_{\lambda f}^{\mathfrak C}(X)-P_{\lambda f}^{\mathfrak C}(X^{+})\|^2.
\end{align*}
Summing the above two inequalities gives
\begin{align*}
&\textstyle\left(\frac{\ell}{2\lambda}-\tau-(2\ell-1)L\right)\|P_{\lambda f}^{\mathfrak C}(X)-P_{\lambda f}^{\mathfrak C}(X^{+})\|^2\\
&\leq h_X(P_{\lambda f}^{\mathfrak C}(X^{+})) - h_X(P_{\lambda f}^{\mathfrak C}(X)) + h_{X^{+}}(P_{\lambda f}^{\mathfrak C}(X)) - h_{X^{+}}(P_{\lambda f}^{\mathfrak C}(X^{+}))\\
&\textstyle
= \frac{1}{2\lambda} \bigg(\,\|P_{\lambda f}^{\mathfrak C}(X^{+})-X\|_{\mathcal A_X^{-1}}^2 - \|P_{\lambda f}^{\mathfrak C}(X)-X\|_{\mathcal A_X^{-1}}^2 \\
&\textstyle\quad\quad +\, \|P_{\lambda f}^{\mathfrak C}(X)-X^{+}\|_{\mathcal A_{X^{+}}^{-1}}^2 - \|P_{\lambda f}^{\mathfrak C}(X^{+})-X^{+}\|_{\mathcal A_{X^{+}}^{-1}}^2\,\bigg)\\
&\textstyle
= \frac{1}{2\lambda} \left(\,\|P_{\lambda f}^{\mathfrak C}(X^{+})\|_{\mathcal A_X^{-1}}^2 - \|P_{\lambda f}^{\mathfrak C}(X^{+})\|_{\mathcal A_{X^{+}}^{-1}}^2 + \|P_{\lambda f}^{\mathfrak C}(X)\|_{\mathcal A_{X^{+}}^{-1}}^2 - \|P_{\lambda f}^{\mathfrak C}(X)\|_{\mathcal A_X^{-1}}^2\right.\\
&\textstyle
\qquad
\left.-\, \ell\, \langle\,P_{\lambda f}^{\mathfrak C}(X^{+}), X\,\rangle + \ell\, \langle\,P_{\lambda f}^{\mathfrak C}(X), X\,\rangle -\, \ell\, \langle\,P_{\lambda f}^{\mathfrak C}(X), X^{+}\,\rangle + \ell\, \langle\,P_{\lambda f}^{\mathfrak C}(X^{+}), X^{+}\,\rangle\,\right)\\
&\textstyle
\leq \frac{\ell(\ell-2)}{2\lambda} \|X^{+}-X\| \cdot \| P_{\lambda f}^{\mathfrak C}(X^{+}) - P_{\lambda f}^{\mathfrak C}(X)\| + \frac{\ell}{2\lambda}\,\langle\,P_{\lambda f}^{\mathfrak C}(X^{+}) - P_{\lambda f}^{\mathfrak C}(X), X^{+} - X\,\rangle\\
&\textstyle
\leq \frac{\ell(\ell-1)}{2\lambda} \|X^{+}-X\| \cdot \|P_{\lambda f}^{\mathfrak C}(X^{+}) - P_{\lambda f}^{\mathfrak C}(X)\| = \frac{1}{\lambda}\binom{\ell}{2} \|X^{+}-X\| \cdot \|P_{\lambda f}^{\mathfrak C}(X^{+}) - P_{\lambda f}^{\mathfrak C}(X)\|, 
\end{align*}
where the second equality follows from the fact that $\mathcal A_X^{-1}(X) = \frac{\ell}{2}\,X$ and $\mathcal A_{X^{+}}^{-1}(X^{+}) = \frac{\ell}{2}\,X^{+}$, and the second inequality follows from \eqref{eq-almostNonexpansive2} and the Cauchy--Schwarz inequality. This establishes the Lipschitz-like property \eqref{eq-lipslike2}.
\end{proof}

Now, motivated by the definition of the adaptive proximal map $P_{\lambda f}^{\mathfrak C}$ in~\eqref{eq-bregmanproximal}, let us consider the map $\Theta_{\lambda f}^{\mathfrak C}:\R^{n \times p} \rightarrow \R_+$ defined as $\Theta_{\lambda f}^{\mathfrak C}(X) = \textstyle{\frac{1}{
\lambda}}\|P_{\lambda f}^{\mathfrak C}(X)-X\|_{\mathcal A_X^{-1}}$. The following proposition shows that $\Theta_{\lambda f}^{\mathfrak C}$ can be viewed as a \emph{surrogate stationarity measure} of Problem~\eqref{mainpb}.


\begin{proposition}[Surrogate Stationarity Measure] \label{prop-stationarymeasure}
    Let $\lambda \in (0, \frac{\ell}{2(\tau + {(2\ell-1)L})})$ be fixed, so that $P_{\lambda f}^{\mathfrak C}$ is single-valued. Then, for any $X \in \Stiefel{n}{p}$, the following assertions hold: 
     
    \begin{enumerate}[label=(\alph*)]
         
        \item\label{prop-stationarymeasure:a} 
        $\dist(0,\pd_{\Stiefel n p}f(P_{\lambda f}^{\mathfrak C}(X))) \leq \sqrt{\binom{\ell}{2}}\frac{1}{\lambda} \|P_{\lambda f}^{\mathfrak C}(X)-X\|_{\mathcal A_X^{-1}} = \sqrt{\binom{\ell}{2}}\, \Theta_{\lambda f}^{\mathfrak C}(X)$ and  \\
        $\frac{\frac{\ell}{2}-\lambda(\tau+L)}{\sqrt{2\ell}}\Theta_{\lambda f}^{\mathfrak C}(X) = \frac{\frac{\ell}{2}-\lambda(\tau+L)}{\sqrt{2\ell}\lambda}\|P_{\lambda f}^{\mathfrak C}(X)-X\|_{\mathcal A_X^{-1}} \leq \dist(0,\pd_{\Stiefel n p}f(X))$.
         
        \item\label{prop-stationarymeasure:b} $\Theta_{\lambda f}^{\mathfrak C}(X) = 0\ \Longleftrightarrow\ X = P_{\lambda f}^{\mathfrak C}(X)\ \Longleftrightarrow \ 0 \in \pd_{\Stiefel n p}f(X)$.
    \end{enumerate}    
\end{proposition}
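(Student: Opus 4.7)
The plan is to prove part~\ref{prop-stationarymeasure:a} first, then obtain part~\ref{prop-stationarymeasure:b} as a corollary. For the upper bound on $\dist(0,\pd_{\Stiefel n p}f(P_{\lambda f}^{\mathfrak C}(X)))$, I would apply the first-order optimality condition at $Y^\star = P_{\lambda f}^{\mathfrak C}(X)$. Because the quadratic perturbation $Y \mapsto \tfrac{1}{2\lambda}\|Y-X\|_{\mathcal A_X^{-1}}^2$ is smooth with Euclidean gradient $\tfrac{1}{\lambda}\mathcal A_X^{-1}(Y-X)$, the Clarke calculus together with formula~\eqref{eq-Rsubdiff} yields some $\widetilde\nabla_{\Stiefel n p} f(Y^\star) \in \pd_{\Stiefel n p} f(Y^\star)$ satisfying
\[
\widetilde\nabla_{\Stiefel n p} f(Y^\star) = -\mathcal P_{T_{Y^\star}\Stiefel{n}{p}}\bigl(\tfrac{1}{\lambda}\mathcal A_X^{-1}(Y^\star - X)\bigr).
\]
Taking Frobenius norms, using non-expansiveness of the tangent projection, and converting $\|\mathcal A_X^{-1}(\cdot)\|$ into $\|\cdot\|_{\mathcal A_X^{-1}}$ via the spectral upper bound $\mathcal A_X^{-1} \preceq \binom{\ell}{2} I$ from Lemma~\ref{lem-selfadjoint} produces the desired constant $\sqrt{\binom{\ell}{2}}$ in front of $\Theta_{\lambda f}^{\mathfrak C}(X)$.

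For the lower bound on $\dist(0,\pd_{\Stiefel n p}f(X))$, I would combine the defining inequality $f(Y^\star) + \tfrac{1}{2\lambda}\|Y^\star - X\|_{\mathcal A_X^{-1}}^2 \le f(X)$ (obtained by comparing $Y^\star$ against $X$ in the minimization~\eqref{eq-bregmanproximal}) with the Riemannian subgradient inequality~\eqref{eq-RSI} applied at $X$ on the $1$-proximally smooth Stiefel manifold for an arbitrary $\widetilde\nabla_{\Stiefel n p}f(X) \in \pd_{\Stiefel n p}f(X)$. After rearranging, applying Cauchy--Schwarz to the linear term, and using the opposite spectral bound $\mathcal A_X^{-1} \succeq \tfrac{\ell}{2} I$ (also from Lemma~\ref{lem-selfadjoint}) to translate $\|Y^\star - X\|$ into $\sqrt{2/\ell}\,\|Y^\star - X\|_{\mathcal A_X^{-1}}$, I obtain
\[
\bigl(\tfrac{1}{2\lambda} - \tfrac{\tau+L}{\ell}\bigr)\|Y^\star - X\|_{\mathcal A_X^{-1}} \le \sqrt{\tfrac{2}{\ell}}\,\|\widetilde\nabla_{\Stiefel n p}f(X)\|.
\]
Simplifying the prefactor to $\tfrac{\ell/2 - \lambda(\tau+L)}{\lambda\ell}$ and then taking the infimum over $\widetilde\nabla_{\Stiefel n p}f(X)\in \pd_{\Stiefel n p}f(X)$ yields the claim. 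The stipulated range $\lambda \in \bigl(0,\tfrac{\ell}{2(\tau+(2\ell-1)L)}\bigr)$ ensures $\tfrac{\ell}{2}-\lambda(\tau+L) > 0$, so the coefficient is strictly positive.

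Part~\ref{prop-stationarymeasure:b} then follows quickly. The first equivalence $\Theta_{\lambda f}^{\mathfrak C}(X) = 0 \Leftrightarrow X = P_{\lambda f}^{\mathfrak C}(X)$ is immediate from the positive definiteness of $\mathcal A_X^{-1}$. For the second: if $X = P_{\lambda f}^{\mathfrak C}(X)$, then $\Theta_{\lambda f}^{\mathfrak C}(X) = 0$ and the first inequality in~\ref{prop-stationarymeasure:a} forces $0 \in \pd_{\Stiefel n p} f(X)$; conversely, if $0 \in \pd_{\Stiefel n p}f(X)$, the second inequality in~\ref{prop-stationarymeasure:a}, whose prefactor is strictly positive by the range of $\lambda$, forces $\|P_{\lambda f}^{\mathfrak C}(X) - X\|_{\mathcal A_X^{-1}} = 0$.

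The main work is bookkeeping rather than insight: tracking the spectral bounds of $\mathcal A_X^{-1}$ carefully on both sides and confirming that the coefficient $\tfrac{\ell}{2} - \lambda(\tau+L)$ remains positive in the stated range of $\lambda$. Both steps are routine given the already-established machinery for the averaging operator and the Riemannian subgradient inequality.
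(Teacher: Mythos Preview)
Your proposal is correct and follows essentially the same approach as the paper's proof: both use the first-order optimality condition at $P_{\lambda f}^{\mathfrak C}(X)$ together with the spectral upper bound $\mathcal A_X^{-1} \preceq \binom{\ell}{2} I$ for the first inequality, the Riemannian subgradient inequality~\eqref{eq-RSI} at $X$ combined with the defining minimization inequality and the spectral lower bound $\mathcal A_X^{-1} \succeq \tfrac{\ell}{2} I$ for the second, and then derive part~\ref{prop-stationarymeasure:b} directly from~\ref{prop-stationarymeasure:a}.
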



\begin{proof}

    We first prove assertion~\ref{prop-stationarymeasure:a}. Let $Z = P_{\lambda f}^{\mathfrak C}(X)$. By the first-order optimality condition of~\eqref{eq-bregmanproximal}, we have $\mathcal P_{T_Z \Stiefel n p}\!\left(\frac{1}{\lambda}\,\mathcal A_{X}^{-1}(X - Z)\right) \in \pd_{\Stiefel n p}f(Z)$. This, together with Lemma~\ref{lem-selfadjoint}, implies that $\dist(0,\pd_{\Stiefel n p}f(Z)) \leq \frac{1}{\lambda} \,\|\mathcal A_X^{-1}(X-Z)\| \leq \frac{1}{\lambda}\sqrt{\binom{\ell}{2}}\|X-Z\|_{\mathcal A_X^{-1}}$. Now, let $V \in \pd_{\Stiefel n p}f(X)$ be arbitrary. By the Riemannian subgradient inequality~\eqref{eq-RSI} and Lemma~\ref{lem-selfadjoint}, we have
        \begin{align*}
            \langle\,V,X-Z\,\rangle &\textstyle\geq f(X) - f(Z) - \frac{\tau+L}{2}\|Z-X\|^2 
            \geq \frac{1}{2\lambda}\|Z-X\|_{\mathcal A_X^{-1}}^2 - \frac{\tau+L}{2}\|Z-X\|^2\\ 
            &\textstyle\geq \left(\frac{1}{2\lambda}-\frac{\tau+L}{2} \cdot \frac{2}{\ell}\right)\|Z-X\|_{\mathcal A_X^{-1}} \cdot \sqrt{\frac{\ell}{2}}\|Z-X\|.
        \end{align*}
        This implies that $\|V\| \geq \sqrt{\frac{\ell}{2}} \cdot \frac{1}{\ell}\left(\frac{\ell}{2\lambda}-\tau-L\right)\|Z-X\|_{\mathcal A_X^{-1}}$, where $\frac{\ell}{2\lambda}-\tau-L > (2\ell-1)L - L = 2(\ell-1) L > 0$.

        Assertion~\ref{prop-stationarymeasure:b} follows directly from assertion~\ref{prop-stationarymeasure:a}.
\end{proof}

\subsection{Convergence Rate of RSSM} \label{subsect-convstationary}
Our final goal is to establish the convergence rate of RSSM. We first prove a recursion lemma.

\begin{lemma}[Recursion Lemma]
\label{lem-basicrecursion}
Let $\{X^k\}_k$ be the iterates generated by Algorithm \ref{alg-RSSM} with stepsizes $\gamma_k \in (0,\frac{1}{L})$ for all $k\ge0$. Then, for any $k\ge0$ and $Y \in \Stiefel n p$, we have
\begin{align*}
    \E\!&\textstyle\left[\,\|Y - X^{k+1}\|_{\mathcal A_{X^{k+1}}^{-1}}^2\,\middle|\,X^k\,\right] 
    \leq \textstyle\|Y-X^k\|_{\mathcal A_{X^k}^{-1}}^2
    + 2\gamma_k \left[f(Y)-f(X^k) + \frac{\tau+(2\ell-3)L}{\ell} \|Y-X^k\|_{\mathcal A_{X^k}^{-1}}^2\right]\\ 
    &\textstyle\qquad+ \gamma_k^2 L^2 \left(1+(\ell-2)[4\|Y-X^k\|+\|Y-X^k\|^2]\vphantom{\frac{1}{2}}\right)
    + (\ell-2) \gamma_k^3 L^3 (\|Y-X^k\|^2+3) + (\ell-2)\gamma_k^4 L^4.
\end{align*}
\end{lemma}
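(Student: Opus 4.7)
The plan is to combine the Metric Comparison Lemma (Lemma~\ref{lem-almostNonexpansive}) with a second-order expansion of the block update and then invoke the unbiasedness identities of Lemma~\ref{lem-cindep} together with the Riemannian subgradient inequality~\eqref{eq-RSI}.

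First I would apply Lemma~\ref{lem-almostNonexpansive} with the substitutions $X \leftarrow X^k$ and $X^{+} \leftarrow X^{k+1}$, and take the conditional expectation given $X^k$, to obtain
\begin{align*}
\E\bigl[\|Y - X^{k+1}\|^2_{\mathcal A_{X^{k+1}}^{-1}}\bigm|X^k\bigr] \;\le\; \E\bigl[\|Y - X^{k+1}\|^2_{\mathcal A_{X^k}^{-1}}\bigm|X^k\bigr] + \mathcal E_1,
\end{align*}
where $\mathcal E_1$ collects the three $(\ell-2)$-weighted error terms on the right-hand side of~\eqref{eq-almostNonexpansive}. This reduces the task to controlling the first term on the right in the same metric $\mathcal A_{X^k}^{-1}$ as at the starting iterate.

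Next I would expand the remaining squared $\mathcal A_{X^k}^{-1}$-norm. Writing $\xi_{ij} = \widetilde\nabla^{C_{ij}}_{\mathcal M_{X^k_{-ij}}} f(X^k)$ and using Lemma~\ref{lem-projsubmfdblk} to identify $X^{k+1}_{ij} = \mathcal P_{\Stiefel{n}{p_{ij}}}(X^k_{ij} - \gamma_k \xi_{ij})$, the second-order boundedness condition~\eqref{eq-secondbdd} gives $X^{k+1}_{ij} - X^k_{ij} = -\gamma_k \xi_{ij} + r_{ij}$ with $\|r_{ij}\| \le \gamma_k^2\|\xi_{ij}\|^2 \le \gamma_k^2 L^2$. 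Since $X^{k+1}_{-ij} = X^k_{-ij}$, this yields $Y - X^{k+1} = (Y-X^k) + \gamma_k \xi_{ij} I_{ij}^\top - r_{ij} I_{ij}^\top$, and expanding the squared $\mathcal A_{X^k}^{-1}$-norm produces the principal cross-term $2\gamma_k\langle Y-X^k,\xi_{ij}I_{ij}^\top\rangle_{\mathcal A_{X^k}^{-1}}$, a quadratic-in-$\xi_{ij}$ term $\gamma_k^2\|\xi_{ij}I_{ij}^\top\|^2_{\mathcal A_{X^k}^{-1}}$, and several residuals involving $r_{ij}$. Taking the conditional expectation over $\{i,j\} \sim \mathrm{Uniform}\binom{[\ell]}{2}$, Lemma~\ref{lem-cindep} converts the principal cross-term to $2\gamma_k\langle Y-X^k,\widetilde\nabla_{\Stiefel{n}{p}}f(X^k)\rangle$ and the quadratic term to $\gamma_k^2\|\widetilde\nabla_{\Stiefel{n}{p}}f(X^k)\|^2 \le \gamma_k^2 L^2$, which is the source of the bare $\gamma_k^2 L^2$ term in the target. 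The Riemannian subgradient inequality~\eqref{eq-RSI} (with $R=1$) then bounds the cross-term by $2\gamma_k[f(Y)-f(X^k)] + \gamma_k(\tau+L)\|Y-X^k\|^2$, and converting this Frobenius squared norm to the Mahalanobis norm via $\|\cdot\|^2 \le \tfrac{2}{\ell}\|\cdot\|^2_{\mathcal A_{X^k}^{-1}}$ (Lemma~\ref{lem-selfadjoint}) merges the $(\tau+L)$ contribution with the $2(\ell-2)L$ contribution from $\mathcal E_1$ into the advertised coefficient $\tfrac{2\gamma_k[\tau+(2\ell-3)L]}{\ell}$ in front of $\|Y-X^k\|^2_{\mathcal A_{X^k}^{-1}}$.

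The main obstacle will be the careful bookkeeping of the residual terms $\E[\|r_{ij}I_{ij}^\top\|^2_{\mathcal A_{X^k}^{-1}}]$, $\E[\langle Y-X^k,r_{ij}I_{ij}^\top\rangle_{\mathcal A_{X^k}^{-1}}]$, and $\E[\langle\xi_{ij}I_{ij}^\top,r_{ij}I_{ij}^\top\rangle_{\mathcal A_{X^k}^{-1}}]$, and absorbing them---together with the $\mathcal E_1$ contributions---into the $\gamma_k^2 L^2$, $\gamma_k^3 L^3$, and $\gamma_k^4 L^4$ buckets of the target inequality with the correct $(\ell-2)$-style constants. I would control these via Cauchy--Schwarz in the $\mathcal A_{X^k}^{-1}$ metric, the operator-norm estimates on $\mathcal A_X^{-1}$ from Lemma~\ref{lem-selfadjoint}, the uniform bound $\|\xi_{ij}\|\le L$, and $\|r_{ij}\|\le \gamma_k^2 L^2$; exploiting the block-support structure of $r_{ij}$ (namely $X^k_{-ij}^\top r_{ij} = 0$, which localizes $r_{ij}I_{ij}^\top$ to the $C_{ij}$-supported subspace) is expected to be essential for obtaining the factor $(\ell-2)$ rather than the cruder $\binom{\ell}{2}$ that a naive operator-norm bound would supply.
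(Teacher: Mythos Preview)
Your overall strategy matches the paper's proof almost exactly: both combine Lemma~\ref{lem-almostNonexpansive}, the second-order expansion of the block update, Lemma~\ref{lem-cindep}, and the Riemannian subgradient inequality~\eqref{eq-RSI}, followed by the norm conversion $\|\cdot\|^2\le\tfrac{2}{\ell}\|\cdot\|^2_{\mathcal A_{X^k}^{-1}}$. The only cosmetic difference is the order in which you apply Lemma~\ref{lem-almostNonexpansive} (first rather than last); this is immaterial.

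The one genuine gap is in your plan for the retraction residual $r_{ij}$. The block-support property $X^k_{-ij}{}^\top r_{ij}=0$ is \emph{not} what yields the $(\ell-2)$ factors. Indeed, that condition only forces $(X^k)^\top(r_{ij}I_{ij}^\top)$ to live in the $(C_{ij},C_{ij})$ block, but the off-diagonal sub-blocks $(C_i,C_j)$ and $(C_j,C_i)$ still sit in the $\binom{\ell}{2}$-eigenspace of $\mathcal A_{X^k}^{-1}$; a Cauchy--Schwarz bound in the $\mathcal A_{X^k}^{-1}$-metric therefore produces at best $(\ell-1)$-type constants (e.g.\ $\|r_{ij}I_{ij}^\top\|^2_{\mathcal A_{X^k}^{-1}}\le\binom{\ell}{2}\gamma_k^4\|\xi_{ij}\|^4\le(\ell-1)\gamma_k^4L^2\|\xi_{ij}I_{ij}^\top\|^2_{\mathcal A_{X^k}^{-1}}$), which after expectation does not match the stated $(\ell-2)\gamma_k^4L^4$. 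What the paper actually does is split $\mathcal A_{X^k}^{-1}=\tfrac{\ell}{2}\mathcal I+(\mathcal A_{X^k}^{-1}-\tfrac{\ell}{2}\mathcal I)$: on the $\tfrac{\ell}{2}\mathcal I$-part one invokes the Lipschitz-like property~\eqref{eq-lipslike} to get $\tfrac{\ell}{2}\|Y-X^{k+1}\|^2\le\tfrac{\ell}{2}\|Y-X^k+\gamma_k\xi_{ij}I_{ij}^\top\|^2$, which absorbs the residual \emph{exactly} with no error; only the remainder $(\mathcal A_{X^k}^{-1}-\tfrac{\ell}{2}\mathcal I)$, whose operator norm is $\binom{\ell}{2}-\tfrac{\ell}{2}=\tfrac{\ell(\ell-2)}{2}$, sees $r_{ij}$ at all, and this is precisely the origin of every $(\ell-2)$ factor. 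Equivalently, in your expansion you must use the sign of the Frobenius cross-term (encoded in~\eqref{eq-lipslike}) rather than bounding it by Cauchy--Schwarz.
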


\begin{proof}
    Let $\{i,j\} \sim \mathrm{Uniform}\binom{[\ell]}{2}$ be drawn. Given $X^k \in \Stiefel n p$, we pick an arbitrary $\widetilde\nabla f(X^k) \in \pd f(X^k)$ and set $\widetilde\nabla_{\Stiefel n p} f(X) = \mathcal P_{T_{X}\Stiefel n p}(\widetilde\nabla f(X))$. Since $\gamma_k < \frac{1}{L}$, we have $\left\|X_{ij}^{k+1} - X_{ij}^k + \gamma_k \widetilde \nabla^{C_{ij}}_{\mathcal{M}_{X_{-ij}^k}} f(X^k)\right\| \leq \gamma_k^2 \left\| \widetilde \nabla^{C_{ij}}_{\mathcal{M}_{X_{-ij}^k}} f(X^k)\right\|^2$ by the second-order boundedness condition \eqref{eq-secondbdd}. Denoting the identity operator on $\R^{n\times p}$ by $\mathcal I$, we bound
    \begin{align*}
    &\textstyle \|Y - X^{k+1}\|_{\mathcal A_{X^{k}}^{-1}}^2 = \frac{\ell}{2}\,\|Y - X^{k+1}\|^2 + \|Y - X^{k+1}\|_{\mathcal A_{X^{k}}^{-1}-\frac{\ell}{2}\mathcal I}^2\\
    &\textstyle\leq \frac{\ell}{2} \left\|Y-X^k+\gamma_k \widetilde \nabla^{C_{ij}}_{\mathcal{M}_{X_{-ij}^k}} f(X^k)I_{ij}^\top \right\|^2 + \Bigg(\left\|Y-X^k+\gamma_k \widetilde \nabla^{C_{ij}}_{\mathcal{M}_{X_{-ij}^k}} f(X^k)I_{ij}^\top\right\|_{\mathcal A_{X^k}^{-1}-\frac{\ell}{2}\mathcal I} \\
    &\textstyle\ \ + \|\mathcal A_{X^k}^{-1} - \frac{\ell}{2}\mathcal I\|_{\mathsf{op}}^{1/2} \cdot \left\|X^{k+1}-X^k+\gamma_k \widetilde \nabla^{C_{ij}}_{\mathcal{M}_{X_{-ij}^k}} f(X^k)I_{ij}^\top \right\|\Bigg)^2\\
    &\leq \left\|Y-X^k+\gamma_k \widetilde \nabla^{C_{ij}}_{\mathcal{M}_{X_{-ij}^k}} f(X^k)I_{ij}^\top \right\|_{\mathcal A_{X^k}^{-1}}^2\\
    &\textstyle\ \ + \ell(\ell-2) \gamma_k^2 \left\|\widetilde \nabla^{C_{ij}}_{\mathcal{M}_{X_{-ij}^k}} f(X^k)\right\|^2 \cdot \left\| Y-X^k+\gamma_k \widetilde \nabla^{C_{ij}}_{\mathcal{M}_{X_{-ij}^k}} f(X^k)I_{ij}^\top \right\| 
    + \frac{\ell(\ell-2)}{2}\,\gamma_k^4 \left\|\widetilde \nabla^{C_{ij}}_{\mathcal{M}_{X_{-ij}^k}} f(X^k) \right\|^4\\
    &\leq \left\|Y-X^k+\gamma_k \widetilde \nabla^{C_{ij}}_{\mathcal{M}_{X_{-ij}^k}} f(X^k)I_{ij}^\top \right\|_{\mathcal A_{X^k}^{-1}}^2\\
    &\textstyle\ \ + 2(\ell-2) \gamma_k^2 \left\|\widetilde \nabla^{C_{ij}}_{\mathcal{M}_{X_{-ij}^k}} f(X^k)I_{ij}^\top\right\|_{\mathcal A_{X^k}^{-1}}^2 \cdot \left(\left\| Y-X^k\right\| +\gamma_kL \right) 
    + (\ell-2)\gamma_k^4L^2 \left\|\widetilde \nabla^{C_{ij}}_{\mathcal{M}_{X_{-ij}^k}} f(X^k)I_{ij}^\top \right\|_{\mathcal A_{X^k}^{-1}}^2 \\ 
    &\leq \|Y-X^k\|_{\mathcal A_{X^k}^{-1}}^2 + 2\gamma_k \left\langle\,\widetilde \nabla^{C_{ij}}_{\mathcal{M}_{X_{-ij}^k}} f(X^k)I_{ij}^\top, Y-X^k\,\right\rangle_{\mathcal A_{X^k}^{-1}}\\
    &\textstyle\ \  + \gamma_k^2 \left\|\widetilde \nabla^{C_{ij}}_{\mathcal{M}_{X_{-ij}^k}}f(X^k)I_{ij}^\top\right\|_{\mathcal A_{X^k}^{-1}}^2 \left(1 + 2(\ell-2) \|Y-X^k\| \right) \\
    &\textstyle \ \ + 2(\ell-2)\gamma_k^3L \left\|\widetilde \nabla^{C_{ij}}_{\mathcal{M}_{X_{-ij}^k}} f(X^k)I_{ij}^\top\right\|_{\mathcal A_{X^k}^{-1}}^2 + (\ell-2)\gamma_k^4L^2 \left\|\widetilde \nabla^{C_{ij}}_{\mathcal{M}_{X_{-ij}^k}} f(X^k)I_{ij}^\top \right\|_{\mathcal A_{X^k}^{-1}}^2,
    \end{align*}
    where the first inequality follows from \eqref{eq-lipslike}; 
    the second and third inequalities are due to Lemma~\ref{lem-selfadjoint} and the fact that $\left\|\widetilde\nabla_{\mathcal M_{X_{-ij}^k}}^{C_{ij}}f(X^k)\right\| \leq L$. Conditioning on $X^k$ and taking expectation on both sides of the above inequality with respect to  $\{i,j\} \sim \mathrm {Uniform}\binom{[\ell]}{2}$, we get
    %
    \begin{align*}
    \textstyle\E\!\left[\|Y - X^{k+1}\|_{\mathcal A_{X^{k}}^{-1}}^2\,\middle|\,X^k\right]
    &\textstyle\leq \|Y-X^k\|_{\mathcal A_{X^k}^{-1}}^2 + 2\gamma_k\,\langle\,\widetilde\nabla_{\Stiefel n p}f(X^k), Y-X^k\,\rangle \\
    &\textstyle\ \ + \gamma_k^2L^2(1 + 2(\ell-2) \|Y-X^k\|) + 2(\ell-2)\gamma_k^3L^3 + (\ell-2)\gamma_k^4L^4 \\
    &\textstyle\leq \|Y-X^k\|_{\mathcal A_{X^k}^{-1}}^2 + 2\gamma_k \left[f(Y)-f(X^k) + \frac{\tau+L}{2}\|Y-X^k\|^2\right]\\ 
    &\textstyle\ \ + \gamma_k^2 L^2(1 + 2(\ell-2) \|Y-X^k\|) + 2(\ell-2)\gamma_k^3 L^3 + (\ell-2)\gamma_k^4 L^4,
    \end{align*}
    where the first inequality is due to Lemma \ref{lem-cindep} and the fact that $\|\widetilde\nabla_{\Stiefel n p}f(X^k)\| \leq L$, and the second inequality is due to Lemma \ref{lem-RSI}. It then follows from \eqref{eq-almostNonexpansive} that
    \begin{align*}
    \textstyle\E\!\left[\|Y - X^{k+1}\|_{\mathcal A_{X^{k+1}}^{-1}}^2\,\middle|\,X^k\right]
    &\textstyle\leq \|Y-X^k\|_{\mathcal A_{X^k}^{-1}}^2 + 2\gamma_k \left[f(Y)-f(X^k) + \frac{\tau+L}{2}\|Y-X^k\|^2\right]\\ 
    &\textstyle\quad+ \gamma_k^2 L^2(1 + 2(\ell-2) \|Y-X^k\|) + 2(\ell-2)\gamma_k^3 L^3 + (\ell-2)\gamma_k^4 L^4 \\
    &\textstyle\quad + 2 (\ell-2) \gamma_k L \|Y-X^k\|^2 + (\ell-2) \gamma_k^2 L^2\left(\|Y-X^k\|^2 + 2\|Y-X^k\|\right)\\ 
    &\textstyle\quad + (\ell-2) \gamma_k^3 L^3(\|Y-X^k\|^2+1) \\
    &\textstyle\leq \|Y-X^k\|_{\mathcal A_{X^k}^{-1}}^2 + 2\gamma_k \left[f(Y)-f(X^k) + \frac{\tau+(2\ell-3)L}{2} \cdot \frac{2}{\ell}\|Y-X^k\|_{\mathcal A_{X^k}^{-1}}^2\right]\\
    &\textstyle\quad + \gamma_k^2 L^2 \left(1+(\ell-2)[4\|Y-X^k\|+\|Y-X^k\|^2]\vphantom{\frac{1}{2}}\right)\\ 
    &\textstyle\quad + (\ell-2) \gamma_k^3 L^3 (\|Y-X^k\|^2+3) + (\ell-2)\gamma_k^4 L^4.
    \end{align*}
    Note that the bound above is independent of the choice of $\widetilde\nabla f(X^k) \in \pd f(X^k)$. This completes the proof.
\end{proof}


Lemma~\ref{lem-basicrecursion} allows us to prove the following inequality, which, upon rearranging, essentially establishes the sufficient decrease of the adaptive Moreau envelopes.

\begin{proposition}[Sufficient Decrease of $f^{\mathfrak C}_\lambda$] \label{prop-suffdec}
Let $\{X^k\}_k$ be the iterates generated by Algorithm~\ref{alg-RSSM} with stepsizes $\gamma_k \in (0,\frac{1}{L})$ for all $k\ge0$. Then, for any $k\ge0$ and $\lambda \in (0, \frac{\ell}{2(\tau + (2\ell-1)L)} )$, conditioning on $X^k$, we have
\begin{align}
    \gamma_k \Theta_{\lambda f}^{\mathfrak C}(X^k)^2
    &= \frac{\gamma_k}{\lambda^2} \|P_{\lambda f}^{\mathfrak C}(X^k)-X^k\|_{\mathcal A_{X^k}^{-1}}^2\nonumber\\ 
    &\leq \frac{f_\lambda^{\mathfrak C}(X^k) - \E[\,f^{\mathfrak C}_\lambda(X^{k+1})\,|\,X^k\,] + \frac{9\gamma_k^2 L^2}{2\lambda} + \frac{2(\ell-2)\gamma_k^3 L^3}{\lambda} + \frac{(\ell-2)\gamma_k^4 L^4}{2\lambda}}{\frac{\lambda}{\ell} \left[\frac{\ell}{2\lambda} - (\tau+(2\ell-3) L)\right]}. \label{eq-suffdec}
\end{align}
\end{proposition}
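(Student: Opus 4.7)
The plan is to pick the specific test point $Y = P_{\lambda f}^{\mathfrak C}(X^k)$ in the Recursion Lemma (Lemma~\ref{lem-basicrecursion}) and convert the resulting descent-like inequality into an inequality involving $f_\lambda^{\mathfrak C}(X^k) - \E[f_\lambda^{\mathfrak C}(X^{k+1})\mid X^k]$. Write $Z^k = P_{\lambda f}^{\mathfrak C}(X^k)$, which is well-defined since $\lambda$ lies in the single-valuedness range by Lemma~\ref{lem-proxlips}. From the very definition of the adaptive Moreau envelope together with the feasibility $Z^k \in \Stiefel{n}{p}$, I get the pointwise bound
\[
    f_\lambda^{\mathfrak C}(X^{k+1}) \;\le\; f(Z^k) + \tfrac{1}{2\lambda}\bigl\|Z^k - X^{k+1}\bigr\|_{\mathcal{A}_{X^{k+1}}^{-1}}^2.
\]
Taking conditional expectation given $X^k$ and applying Lemma~\ref{lem-basicrecursion} with $Y = Z^k$ produces an upper bound for the second term that splits naturally into (i) the quantity $\|Z^k - X^k\|_{\mathcal{A}_{X^k}^{-1}}^2$, (ii) a multiple of $f(Z^k) - f(X^k)$, (iii) a multiple of $\|Z^k - X^k\|_{\mathcal{A}_{X^k}^{-1}}^2$ with coefficient $2\gamma_k(\tau + (2\ell-3)L)/\ell$, and (iv) the polynomial error terms in $\gamma_k$ from Lemma~\ref{lem-basicrecursion}.

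The second step is to replace the $f(Z^k)$ pieces using the defining property of the adaptive proximal map. Since $Z^k$ is the minimizer in~\eqref{eq-bregmanproximal} and $X^k$ is feasible, I have
\[
    f(Z^k) + \tfrac{1}{2\lambda}\|Z^k - X^k\|_{\mathcal{A}_{X^k}^{-1}}^2 \;=\; f_\lambda^{\mathfrak C}(X^k) \;\le\; f(X^k),
\]
which gives simultaneously an identity for $f_\lambda^{\mathfrak C}(X^k)$ and the bound $f(Z^k) - f(X^k) \le -\tfrac{1}{2\lambda}\|Z^k - X^k\|_{\mathcal{A}_{X^k}^{-1}}^2$. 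Substituting these into the previous display turns the right-hand side into $f_\lambda^{\mathfrak C}(X^k)$ minus a positive coefficient times $\|Z^k - X^k\|_{\mathcal{A}_{X^k}^{-1}}^2$, plus polynomial error in $\gamma_k$. Grouping the two quadratic-in-$\|Z^k - X^k\|_{\mathcal{A}_{X^k}^{-1}}$ contributions yields the coefficient $\frac{1}{\lambda}\big[\tfrac{1}{2\lambda} - \tfrac{\tau + (2\ell-3)L}{\ell}\big] = \frac{1}{\ell\lambda}\big[\tfrac{\ell}{2\lambda} - \tau - (2\ell-3)L\big]$, which is strictly positive under the standing assumption on $\lambda$ and is precisely the denominator appearing in~\eqref{eq-suffdec}.

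The third step is to dispose of the $\|Z^k - X^k\|$ and $\|Z^k - X^k\|^2$ factors inside the error term of Lemma~\ref{lem-basicrecursion} by the a priori estimate already derived inside the proof of Lemma~\ref{lem-proxlips}: from $f(Z^k) \le f(X^k)$ together with $L$-Lipschitzness of $f$ and the lower bound $\tfrac{\ell}{2}\|\cdot\|^2 \le \|\cdot\|_{\mathcal{A}_{X^k}^{-1}}^2$ provided by Lemma~\ref{lem-selfadjoint}, one gets $\|Z^k - X^k\| \le \tfrac{4\lambda L}{\ell} < 1$ under the prescribed range of $\lambda$. This lets me collapse the $\|Z^k - X^k\|$-dependent terms into numerical coefficients in $(\gamma_k,L,\lambda,\ell)$ only, matching the form $\tfrac{9\gamma_k^2 L^2}{2\lambda} + \tfrac{2(\ell-2)\gamma_k^3 L^3}{\lambda} + \tfrac{(\ell-2)\gamma_k^4 L^4}{2\lambda}$ after dividing the error of Lemma~\ref{lem-basicrecursion} by $2\lambda$.

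Finally, dividing through by the positive quantity $\tfrac{\lambda}{\ell}\big[\tfrac{\ell}{2\lambda} - (\tau + (2\ell-3)L)\big]$ isolates $\tfrac{\gamma_k}{\lambda^2}\|Z^k - X^k\|_{\mathcal{A}_{X^k}^{-1}}^2 = \gamma_k \Theta_{\lambda f}^{\mathfrak C}(X^k)^2$ on the left-hand side and gives the desired inequality. The main obstacle is the bookkeeping in Step~3: I must carefully track how the crude bound $\|Z^k - X^k\| \le \tfrac{4\lambda L}{\ell} < 1$ interacts with each of the three kinds of polynomial error terms supplied by Lemma~\ref{lem-basicrecursion}, to recover the specific constants in the numerator of~\eqref{eq-suffdec} while losing no power of $\gamma_k$. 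Everything else is a clean chain of substitutions from the definitions of $Z^k$, $f_\lambda^{\mathfrak C}$, and the recursion lemma.
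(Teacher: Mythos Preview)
Your proposal is correct and follows essentially the same route as the paper's proof: you plug $Y = P_{\lambda f}^{\mathfrak C}(X^k)$ into Lemma~\ref{lem-basicrecursion}, use the definition of the adaptive Moreau envelope to upper bound $\E[f_\lambda^{\mathfrak C}(X^{k+1})\mid X^k]$, replace $f(Z^k)-f(X^k)$ via the optimality of $Z^k$, and then invoke the bound $\|Z^k - X^k\| \le \tfrac{4\lambda L}{\ell}$ (together with $\lambda L < \tfrac{1}{2}$ and $\ell \ge 2$) to collapse the error terms into the displayed constants before rearranging. The only thing to be careful about in your Step~3 bookkeeping is that the bare estimate $\|Z^k - X^k\| < 1$ is too weak to obtain the exact constants $9$, $2(\ell-2)$, and $(\ell-2)/2$; you must use the sharper $\tfrac{4\lambda L}{\ell}$ and the consequence $\lambda L < \tfrac{1}{2}$, exactly as the paper does.
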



\begin{proof}
    Applying Lemma~\ref{lem-basicrecursion} with $Y = P_{\lambda f}^{\mathfrak C}(X^k) \in \Stiefel n p$, we have
    \begin{align*}
    &\textstyle\E[\,f^{\mathfrak C}_\lambda(X^{k+1})\,|\,X^k\,]\textstyle\leq \E\left[f(P_{\lambda f}^{\mathfrak C}(X^k)) + \frac{1}{2\lambda}\|P_{\lambda f}^{\mathfrak C}(X^k)-X^{k+1}\|_{\mathcal A_{X^{k+1}}^{-1}}^2\,\middle|\,X^k\right]\\
    &\textstyle\leq f_\lambda^{\mathfrak C}(X^k) + \frac{\gamma_k}{\lambda} \left[f(P_{\lambda f}^{\mathfrak C}(X^k))-f(X^k) + \frac{\tau+(2\ell-3)L}{\ell}\|P_{\lambda f}^{\mathfrak C}(X^k)-X^k\|_{\mathcal A_{X^k}^{-1}}^2\right]\\ 
    &\textstyle\quad+ \frac{\gamma_k^2 L^2}{2\lambda} \left(1+(\ell-2) \left[ 4\left(\frac{4\lambda L}{\ell}\right)+\left(\frac{4\lambda L}{\ell}\right)^2 \right]\right) + \frac{(\ell-2) \gamma_k^3 L^3}{2\lambda} \left(\left(\frac{4\lambda L}{\ell}\right)^2+3\right) + \frac{(\ell-2)\gamma_k^4 L^4}{2\lambda}\\
    &\textstyle\leq f_\lambda^{\mathfrak C}(X^k) + \frac{\lambda}{\ell} \left(\tau + (2\ell-3) L - \frac{\ell}{2\lambda}\right) \cdot \gamma_k \Theta_{\lambda f}^{\mathfrak C}(X^k)^2\\
    &\textstyle\quad+ \frac{\gamma_k^2 L^2}{2\lambda} \left(1+(\ell-2) \frac{16\lambda L (\ell + \lambda L)}{\ell^2}\right) + \frac{(\ell-2) \gamma_k^3 L^3}{2\lambda} \cdot \frac{16(\lambda L)^2 + 3\ell^2}{\ell^{2}} + \frac{(\ell-2) \gamma_k^4 L^4}{2\lambda}\\
    &\textstyle< f_\lambda^{\mathfrak C}(X^k) + \frac{\lambda}{\ell} \left(\tau + (2\ell-3) L - \frac{\ell}{2\lambda}\right) \cdot \gamma_k \Theta_{\lambda f}^{\mathfrak C}(X^k)^2\\ 
    &\textstyle\quad+ \frac{\gamma_k^2 L^2}{2\lambda} \left(1+(\ell-2) \frac{4(2\ell+1)}{\ell^2}\right) + \frac{(\ell-2) \gamma_k^3 L^3}{2\lambda} \cdot \frac{4+3\ell^2}{\ell^2} + \frac{(\ell-2) \gamma_k^4 L^4}{2\lambda}\\
    &\textstyle\leq f_\lambda^{\mathfrak C}(X^k) + \frac{\lambda}{\ell} \left(\tau + (2\ell-3) L - \frac{\ell}{2\lambda}\right) \cdot \gamma_k \Theta_{\lambda f}^{\mathfrak C}(X^k)^2 + \frac{9\gamma_k^2 L^2}{2\lambda} + \frac{2(\ell-2)\gamma_k^3 L^3}{\lambda} + \frac{(\ell-2)\gamma_k^4 L^4}{2\lambda},
    \end{align*}
    where the second inequality holds because $\lambda \in (0, \frac{\ell}{2(\tau + (2\ell-1)L)})$ and $\|P_{\lambda f}^{\mathfrak C}(X^k)-X^k\| \leq \frac{4\lambda L}{\ell}$ from the proof of Lemma~\ref{lem-proxlips}, the third inequality follows from $f(P_{\lambda f}^{\mathfrak C}(X^k)) + \frac{1}{2\lambda} \|P_{\lambda f}^{\mathfrak C}(X^k)-X^k\|_{\mathcal A_{X^k}^{-1}}^2 \leq f(X^k)$, the fourth inequality is due to $\lambda < \frac{\ell}{2(\tau + (2\ell-1)L)} < \frac{\ell}{2(2\ell-1) L} < \frac{1}{2L}$, and the last inequality uses the fact that $\ell \ge 2$. 
    Upon rearranging and noting that $\frac{\ell}{2\lambda} - (\tau+(2\ell-3) L) > 0$, we obtain~\eqref{eq-suffdec}, as desired. 
    \end{proof}

By choosing suitable stepsizes $\{\gamma_k\}_k$ in Proposition~\ref{prop-suffdec} and using a telescoping argument, we obtain the following convergence rate of RSSM.


\begin{theorem}[Non-Asymptotic Convergence Rate] \label{thm-expectation}
Let $\lambda \in (0, \frac{\ell}{2(\tau + (2\ell-1)L)})$, $T\ge1$ be fixed and $\{X^k\}_{k=0}^{T}$ be the iterates generated by Algorithm~\ref{alg-RSSM} with the following stepsizes:
\begin{enumerate}[label=(\alph*)]
    \item \label{thm-expectation:a} If $\gamma_k = \frac{\Delta}{\sqrt{\ell(T+1)}}$ for $k=0,1,\ldots,T$, where $\Delta \in \left(0, \frac{1}{L}\right]$ is any constant, then
\begin{align*}
\min_{0 \leq k \leq T} \E[\Theta_{\lambda f}^{\mathfrak C}(X^k)^2]
&\leq \frac{f_\lambda^{\mathfrak C}(X^{0}) - \min\limits_{X \in \Stiefel n p}\,f_\lambda^{\mathfrak C}(X) + \frac{9L^2\Delta^2}{2\lambda \ell} + \frac{2L^3\Delta^3}{\lambda\sqrt{\ell(T+1)}}  + \frac{L^4\Delta^4}{2\lambda \ell(T+1)}}{\frac{\lambda}{\ell}\left(\frac{\ell}{2\lambda}- (\tau+(2\ell-3) L)\right) \cdot \frac{\Delta}{\sqrt{\ell}} \sqrt{T+1}}.
\end{align*}
    \item\label{thm-expectation:b} If $\gamma_k = \frac{\Delta_k}{\sqrt{\ell(k+1)}}$ with $\Delta_k \in [\Delta_{\mathsf{min}}, \tfrac{1}{L}]$ for $k=0,1,\ldots,T$, where $\Delta_{\mathsf{min}} > 0$ is any constant, then
\begin{align*}
&\min_{0 \leq k \leq T} \E[\Theta_{\lambda f}^{\mathfrak C}(X^k)^2] \leq \frac{f_\lambda^{\mathfrak C}(X^{0}) - \min\limits_{X \in \Stiefel n p}\,f_\lambda^{\mathfrak C}(X) + \frac{9(1 + \log T) + 6\sqrt{\ell} + 2}{2\lambda\ell}}{\frac{\lambda}{\ell}\left(\frac{\ell}{2\lambda} - (\tau+(2\ell-3) L)\right) \cdot \frac{\Delta_{\mathsf{min}}}{\sqrt{\ell}} \sqrt{T+1}}.
\end{align*}
\end{enumerate}
\end{theorem}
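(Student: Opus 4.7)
The plan is to turn the one-step sufficient decrease from Proposition~\ref{prop-suffdec} into a global bound by taking total expectation, summing over $k=0,\ldots,T$, and telescoping. First I apply the tower property and sum, which yields
\[
\sum_{k=0}^{T} \gamma_k\, \E[\Theta_{\lambda f}^{\mathfrak C}(X^k)^2] \,\le\, \frac{1}{\kappa}\!\left( f_\lambda^{\mathfrak C}(X^0) - \E[f_\lambda^{\mathfrak C}(X^{T+1})] + \sum_{k=0}^{T}\!\left( \tfrac{9\gamma_k^2 L^2}{2\lambda} + \tfrac{2(\ell-2)\gamma_k^3 L^3}{\lambda} + \tfrac{(\ell-2)\gamma_k^4 L^4}{2\lambda} \right)\right),
\]
where $\kappa := \frac{\lambda}{\ell}\bigl(\frac{\ell}{2\lambda}-(\tau+(2\ell-3)L)\bigr)$ is strictly positive by the range of $\lambda$. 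I then replace $\E[f_\lambda^{\mathfrak C}(X^{T+1})]$ with $\min_{X \in \Stiefel{n}{p}} f_\lambda^{\mathfrak C}(X)$ (which is finite by continuity and compactness) and use the elementary inequality $\min_{0\le k\le T}\E[\Theta_{\lambda f}^{\mathfrak C}(X^k)^2] \le (\sum_{k=0}^{T}\gamma_k)^{-1} \sum_{k=0}^{T} \gamma_k \E[\Theta_{\lambda f}^{\mathfrak C}(X^k)^2]$ to pass from the weighted sum to the minimum. Everything then reduces to estimating the power sums $\sum \gamma_k$, $\sum \gamma_k^2$, $\sum \gamma_k^3$, and $\sum \gamma_k^4$ under the two stepsize rules.

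For part~\ref{thm-expectation:a}, the stepsizes are constant, so $\sum_{k=0}^{T}\gamma_k^m = (T+1)\,(\Delta/\sqrt{\ell(T+1)})^m$ for every $m$; in particular $\sum_{k=0}^{T}\gamma_k = \Delta\sqrt{T+1}/\sqrt{\ell}$ gives the denominator, while direct substitution of $\sum\gamma_k^2=\Delta^2/\ell$, $\sum\gamma_k^3 = \Delta^3/(\ell\sqrt{\ell(T+1)})$, and $\sum\gamma_k^4 = \Delta^4/(\ell^2(T+1))$, together with the bounds $\ell-2\le\ell$ in the $\gamma_k^3$ and $\gamma_k^4$ terms, produces the stated numerator. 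For part~\ref{thm-expectation:b}, I combine $\gamma_k \ge \Delta_{\mathsf{min}}/\sqrt{\ell(k+1)}$ with the telescoping lower bound $\sum_{k=0}^{T} (k+1)^{-1/2}\ge\sqrt{T+1}$ (which follows from $1/\sqrt{j} \ge \sqrt{j}-\sqrt{j-1}$ and summing) to obtain $\sum_{k=0}^{T}\gamma_k\ge \Delta_{\mathsf{min}}\sqrt{T+1}/\sqrt{\ell}$, which again gives the denominator. For the residual sums I exploit $\Delta_k\le 1/L$ together with the standard integral-comparison bounds $\sum_{k=0}^{T}(k+1)^{-1}\le 1+\log T$ (valid for $T\ge 1$), $\sum_{k=0}^{\infty}(k+1)^{-3/2}\le 3$, and $\sum_{k=0}^{\infty}(k+1)^{-2}\le 2$, yielding $\sum\gamma_k^2\le (1+\log T)/(\ell L^2)$, $\sum\gamma_k^3\le 3/(\ell^{3/2} L^3)$, $\sum\gamma_k^4\le 2/(\ell^{2} L^4)$.

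Plugging these estimates into the numerator of the weighted-sum bound and using $(\ell-2)/\ell^{3/2}\le 1/\sqrt{\ell}$ and $(\ell-2)/\ell^2\le 1/\ell$ to consolidate everything over a common $2\lambda\ell$ denominator then yields $\frac{9(1+\log T) + 6\sqrt{\ell} + 2}{2\lambda\ell}$, matching the stated bound. The main obstacle I anticipate is not conceptual but rather purely arithmetic: the four different scales of stepsize powers interact with factors of $\ell$ in slightly different ways, so one has to be disciplined when absorbing $(\ell-2)/\ell^{\alpha}$ factors to land on the clean constants displayed in the theorem. No additional structural result beyond Proposition~\ref{prop-suffdec} (and hence beyond Lemma~\ref{lem-basicrecursion} and the surrogate stationarity framework) is needed.
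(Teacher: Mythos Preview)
Your proposal is correct and follows essentially the same approach as the paper: take expectation in Proposition~\ref{prop-suffdec}, sum over $k=0,\ldots,T$ and telescope, pass from the weighted sum to the minimum, and then estimate the power sums $\sum_k\gamma_k^m$ under each stepsize rule. The only (inessential) difference is that in part~\ref{thm-expectation:b} the paper lower-bounds $\sum_{k=0}^T\gamma_k$ by $(T+1)\cdot\min_{0\le k\le T}\gamma_k \ge \Delta_{\mathsf{min}}\sqrt{T+1}/\sqrt{\ell}$ rather than via your telescoping inequality $\sum_{j=1}^{T+1} j^{-1/2}\ge\sqrt{T+1}$, but the resulting bound is identical.
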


\begin{proof} 
Taking expectation on both sides of~\eqref{eq-suffdec}, we have
\[ 
    \gamma_k\, \E[\Theta_{\lambda f}^{\mathfrak C}(X^k)^2]
    \leq \frac{
    \E\!\left[f_\lambda^{\mathfrak C}(X^k)\right] - \E\!\left[f_\lambda^{\mathfrak C}(X^{k+1})\right] + \frac{9\gamma_k^2 L^2}{2\lambda} + \frac{2(\ell-2)\gamma_k^3 L^3}{\lambda} + \frac{(\ell-2)\gamma_k^4 L^4}{2\lambda}}{\frac{\lambda}{\ell} \left[\frac{\ell}{2\lambda} - (\tau+(2\ell-3) L)\right]}.
\] 
Summing the above inequality over $k = 0, 1, \ldots, T$ gives
\begin{align}
&\min_{0 \leq k \leq T} \E[\,\Theta_{\lambda f}^{\mathfrak C}(X^k)^2\,] \nonumber\\
&\leq \frac{f_\lambda^{\mathfrak C}(X^{0}) - \E[\,f_\lambda^{\mathfrak C}(X^{T+1})] + \frac{9L^2}{2\lambda}\sum\limits_{k=0}^T \gamma_k^2 + \frac{2(\ell-2) L^3}{\lambda} \sum\limits_{k=0}^T \gamma_k^3 + \frac{(\ell-2)L^4}{2\lambda}\sum\limits_{k=0}^T \gamma_k^4}{\frac{\lambda}{\ell}\left(\frac{\ell}{2\lambda}- (\tau+(2\ell-3) L)\right) \sum\limits_{k=0}^T \gamma_k}. \label{eq-key3}
\end{align}

If $\gamma_k = \frac{\Delta}{\sqrt{\ell (T+1)}}$ for $k=0,1,\ldots,T$, then $\sum_{k=0}^T \gamma_k = \frac{\Delta}{\sqrt{\ell}}\sqrt{T+1}$, $\sum_{k=0}^T \gamma_k^2 = \frac{\Delta^2}{\ell}$,  $\sum_{k=0}^T \gamma_k^3 = \frac{\Delta^3}{\ell^{3/2} \sqrt{T+1}}$, and $\sum_{k=0}^T \gamma_k^4 = \frac{\Delta^4}{\ell^2 (T+1)}$. Substituting these into~\eqref{eq-key3} gives~\ref{thm-expectation:a}.

On the other hand, if $\gamma_k = \frac{\Delta_k}{\sqrt{\ell(k+1)}}$ with $\Delta_k \in [\Delta_{\mathsf{min}}, \tfrac{1}{L}]$ for $k=0,1,\ldots,T$, then $\sum_{k=0}^T \gamma_k^2 \leq \frac{1}{\ell L^2}\left(1 + \int_1^T \frac{du}{u}\right) = \frac{1}{\ell L^2}(1 + \log T)$, $\sum_{k=0}^T \gamma_k^3 \leq \frac{1}{\ell^{3/2} L^3} \left(1+\int_1^{\infty} \frac{du}{u^{3/2}}\right) = \frac{3}{2\ell^{3/2} L^3}$, $\sum_{k=0}^T \gamma_k^4 \leq \frac{1}{\ell^2 L^4}\left(1 + \int_1^{\infty} \frac{du}{u^2}\right) = \frac{2}{\ell^2 L^4}$, and $\sum_{k=0}^T \gamma_k \geq (T+1) \gamma_T = \frac{\Delta_{\mathsf{min}}}{\sqrt{\ell}} \sqrt{T+1}$. Substituting these into~\eqref{eq-key3} gives~\ref{thm-expectation:b}. 
\end{proof}

Choosing $\lambda = \frac{\ell}{4(\tau + (2\ell-1)L)}$ and $T \geq \ell-1$ in Theorem \ref{thm-expectation}\ref{thm-expectation:a} yields
\begin{align*}
   \min_{0 \leq k \leq T} \E[\,\Theta_{\lambda f}^{\mathfrak C}(X^k)\,] 
   &\leq \sqrt{\frac{f_\lambda^{\mathfrak C}(X^{0}) - \min\limits_{X \in \Stiefel n p}\,f_\lambda^{\mathfrak C}(X) + \frac{9}{2\lambda \ell} + \frac{2}{\lambda\ell}  + \frac{1}{2\lambda \ell^2}}{\frac{\lambda}{\ell}\left(\frac{\ell}{2\lambda} - \frac{\ell}{4\lambda}\right) \cdot \frac{\Delta}{\sqrt{\ell}} \sqrt{T+1}}} = \mathcal{O}\left( \sqrt[4]{\frac{\ell}{T}} \right), 
\end{align*}
which implies that the iteration complexity (\ie, the number of iterations required to compute a point with the expected surrogate stationarity measure bounded by $\eps$) of RSSM is $\mathcal{O}(\ell \eps^{-4})$. 



Theorem~\ref{thm-expectation} shows that with sufficiently many iterations, our proposed RSSM will produce an iterate $\bar{X}$ such that the expectation $\E[\,\Theta_{\lambda f}^{\mathfrak C}(\bar{X})^2\,]$ is smaller than any given threshold. However, it does not imply that the random quantity $\Theta_{\lambda f}^{\mathfrak C}(\bar{X})$ will be small with high probability or that the sequence $\{\E[\,\Theta_{\lambda f}^{\mathfrak C}(X^k)^2\,]\}_k$ converges. To complement Theorem~\ref{thm-expectation}, we now demonstrate the almost-sure convergence behavior of RSSM. To begin, let $\Lambda = \left\{ (\omega_1,\omega_2,\ldots) : \omega_i \in \binom{[\ell]}{2} \mbox{ for } i=1,2,\ldots \right\}$ denote the set of all possible sequences of pairs in $\binom{[\ell]}{2}$ generated by Algorithm~\ref{alg-RSSM}. It is possible to construct a $\sigma$-algebra $\mathscr{F}$ on $\Lambda$ and a probability measure $\mathbb{P}$ on $\mathscr{F}$ based on the uniform distribution on $\binom{[\ell]}{2}$ such that the triple $(\Lambda,\mathscr{F},\mathbb{P})$ is a probability space; see, e.g., the subsection ``Sequence Space'' in Chapter 1, Section 2 of~\cite{billingsley1995probability}. This allows us to study various events associated with the (random) sequence of iterates generated by Algorithm~\ref{alg-RSSM}.

\begin{theorem}[Almost-Sure Convergence] \label{thm-asympconv}
Let $\{X^k\}_k$ be the iterates generated by Algorithm~\ref{alg-RSSM} with stepsizes $\{\gamma_k\}_k$ satisfying $\gamma_k \in (0,\frac{1}{L})$ for all $k\ge0$, $\sum_{k\ge0} \gamma_k = \infty$, and $\sum_{k\ge0} \gamma_k^2  < \infty$. Then, for any $\lambda \in (0, \frac{\ell}{2(\tau + (2\ell-1)L)})$, we will have $\lim\limits_{k\to \infty} \Theta_{\lambda f}^{\mathfrak C}(X^k) = 0$ $\mathbb{P}$-almost surely. Hence, every accumulation point of $\{X^k\}_k$ will be a stationary point of Problem~\eqref{mainpb} $\mathbb{P}$-almost surely.
\end{theorem}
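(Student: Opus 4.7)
The plan is to combine the sufficient decrease of the adaptive Moreau envelope from Proposition~\ref{prop-suffdec} with the Robbins--Siegmund supermartingale convergence theorem to obtain $\mathbb P$-a.s.\ summability of $\{\gamma_k \Theta_{\lambda f}^{\mathfrak C}(X^k)^2\}_k$, and then promote the resulting $\liminf=0$ to a full limit using the Lipschitz-like property of the adaptive proximal map from Lemma~\ref{lem-proxlips}. Since $f$ is continuous on the compact manifold $\Stiefel n p$, we have $\min_{Y \in \Stiefel n p} f \leq f_\lambda^{\mathfrak C}(X) \leq f(X)$ for every $X \in \Stiefel n p$, so $f_\lambda^{\mathfrak C}$ is bounded on $\Stiefel n p$. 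Rearranging~\eqref{eq-suffdec} gives, with $D := \tfrac{\lambda}{\ell}[\tfrac{\ell}{2\lambda}-(\tau+(2\ell-3)L)] > 0$ and some constant $C > 0$ depending only on $\lambda,\ell,L$,
\begin{equation*}
\E\bigl[\,f_\lambda^{\mathfrak C}(X^{k+1}) \,\big|\, X^k\,\bigr] \leq f_\lambda^{\mathfrak C}(X^k) - D\gamma_k\,\Theta_{\lambda f}^{\mathfrak C}(X^k)^2 + C\bigl(\gamma_k^2 + \gamma_k^3 + \gamma_k^4\bigr).
\end{equation*}
With $\gamma_k \in (0, 1/L)$ and $\sum_k \gamma_k^2 < \infty$, the three tail terms are summable. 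Applying the Robbins--Siegmund theorem to the non-negative adapted sequence $Z_k := f_\lambda^{\mathfrak C}(X^k) - \min_{Y \in \Stiefel n p} f$ yields, $\mathbb P$-a.s., both the convergence of $\{f_\lambda^{\mathfrak C}(X^k)\}$ to a finite limit and $\sum_{k \geq 0} \gamma_k \Theta_{\lambda f}^{\mathfrak C}(X^k)^2 < \infty$. Combined with $\sum_k \gamma_k = \infty$, this forces $\liminf_{k \to \infty} \Theta_{\lambda f}^{\mathfrak C}(X^k) = 0$ a.s.

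To upgrade this $\liminf$ to a full limit, I would work with the auxiliary quantity $d(X) := \|P_{\lambda f}^{\mathfrak C}(X) - X\|$. The norm equivalence $\tfrac{\ell}{2}\|\cdot\|^2 \leq \|\cdot\|_{\mathcal A_X^{-1}}^2 \leq \binom{\ell}{2}\|\cdot\|^2$ from Lemma~\ref{lem-selfadjoint} yields $\sqrt{\ell/2}\,d(X) \leq \lambda\Theta_{\lambda f}^{\mathfrak C}(X) \leq \sqrt{\binom{\ell}{2}}\,d(X)$, so the a.s.\ summability transfers from $\gamma_k \Theta_{\lambda f}^{\mathfrak C}(X^k)^2$ to $\gamma_k d(X^k)^2$. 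The update in line~\ref{line:update} together with~\eqref{eq-lipslike} gives $\|X^{k+1} - X^k\| \leq \gamma_k L$, and Lemma~\ref{lem-proxlips} then yields
\begin{equation*}
|d(X^{k+1}) - d(X^k)| \leq \|P_{\lambda f}^{\mathfrak C}(X^{k+1}) - P_{\lambda f}^{\mathfrak C}(X^k)\| + \|X^{k+1} - X^k\| \leq (C_P + 1)\,\gamma_k L,
\end{equation*}
where $C_P$ is the Lipschitz constant in~\eqref{eq-lipslike2}. A standard up-crossing argument then precludes oscillation: if $\limsup_k d(X^k) \geq \delta > 0$ while $\liminf_k d(X^k) = 0$, one can extract infinitely many disjoint index windows $[m_j, n_j]$ with $d(X^{m_j}) < \delta/2$, $d(X^{n_j}) \geq \delta$, and $d(X^k) \geq \delta/2$ throughout; the Lipschitz bound forces $\sum_{k=m_j}^{n_j-1}\gamma_k \geq \delta/(2(C_P+1)L)$, contributing at least $\delta^3/(8(C_P+1)L)$ per window to $\sum_k \gamma_k d(X^k)^2$ and contradicting its a.s.\ finiteness. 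Hence $d(X^k) \to 0$ and therefore $\Theta_{\lambda f}^{\mathfrak C}(X^k) \to 0$ $\mathbb{P}$-a.s.

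For the accumulation-point claim, Proposition~\ref{prop-stationarymeasure}\ref{prop-stationarymeasure:a} gives $\dist(0, \pd_{\Stiefel n p} f(P_{\lambda f}^{\mathfrak C}(X^k))) \leq \sqrt{\binom{\ell}{2}}\,\Theta_{\lambda f}^{\mathfrak C}(X^k) \to 0$, and by construction $\|X^k - P_{\lambda f}^{\mathfrak C}(X^k)\| = d(X^k) \to 0$. Thus for any convergent subsequence $X^{k_j} \to X^{\ast}$ on $\Stiefel n p$, also $P_{\lambda f}^{\mathfrak C}(X^{k_j}) \to X^{\ast}$; extracting a further subsequence of Euclidean subgradients (bounded by $L$ via Lemma~\ref{lem-RSI}), the outer semicontinuity of $\pd f$ together with continuity of the tangent projection $\mathcal P_{T_{(\cdot)} \Stiefel n p}$ produces an element of $\pd_{\Stiefel n p} f(X^{\ast})$ equal to $0$. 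I expect the main obstacles to be two bookkeeping items: first, casting the rearrangement of~\eqref{eq-suffdec} in a form with non-negative, adapted, $\mathscr F_k$-measurable terms so that Robbins--Siegmund applies cleanly on the probability space $(\Lambda,\mathscr F,\mathbb P)$; and second, managing the random stopping times that define the up-crossing windows carefully enough to guarantee disjointness and the requisite lower bounds on $d(X^k)$ within each window.
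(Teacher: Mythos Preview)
Your proposal is correct and follows essentially the same route as the paper: apply Robbins--Siegmund to the rearranged sufficient-decrease inequality to obtain $\sum_k \gamma_k\,\Theta_{\lambda f}^{\mathfrak C}(X^k)^2<\infty$ almost surely, then use the Lipschitz-like property of $P_{\lambda f}^{\mathfrak C}$ together with $\|X^{k+1}-X^k\|\le L\gamma_k$ to promote the $\liminf$ to a full limit. The only cosmetic differences are that the paper packages the second step into the convergence lemma (Lemma~\ref{convlem}) applied directly to $\Phi=\Theta_{\lambda f}^{\mathfrak C}$, whereas you reprove that lemma inline via an up-crossing argument after passing to the equivalent Frobenius-norm quantity $d(X)=\|P_{\lambda f}^{\mathfrak C}(X)-X\|$; your treatment of the accumulation-point claim via outer semicontinuity is also more detailed than the paper's one-line assertion.
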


\begin{proof} 
The proof involves verifying the conditions in Lemma~\ref{convlem} with $Y^k=X^k$, $\mu_k=\gamma_k$ for $k\ge0$ and $\Phi = \Theta_{\lambda f}^{\mathfrak C}$. First, using Lemma~\ref{lem-proxlips} with $X=X^k$ and $X^+=X^{k+1}$ in~\eqref{eq-lipslike2} and the definition of $\Theta_{\lambda f}^{\mathfrak C}$, we have $|\Theta_{\lambda f}^{\mathfrak C}(X^{k+1}) - \Theta_{\lambda f}^{\mathfrak C}(X^{k})| \leq L_\Theta \|X^{k+1}-X^k\|$ for some $L_\Theta > 0$. Next, we have $\|X^k - X^{k+1}\| \leq L\gamma_k$ for all $k\ge0$ and $\sum_{k\ge0} \gamma_k = \infty$. Lastly, let $f^\ast = \min_{X \in \Stiefel n p} f_\lambda^{\mathfrak C}(X) = \min_{X \in \Stiefel n p} f(X)$. Since $\gamma_k < \frac{1}{L}$ for all $k\ge0$, by Proposition \ref{prop-suffdec}, we have
\begin{align*}
&\textstyle\E\left[\,f_\lambda^{\mathfrak C}(X^{k+1}) - f^\ast\,\middle|\,X^k\,\right]\\ 
&\textstyle\leq
\left(f_\lambda^{\mathfrak C}(X^k) - f^\ast\right) - \frac{\lambda}{\ell}\left(\frac{\ell}{2\lambda}-(\tau+(2\ell-3) L)\right) \cdot\,\gamma_k \Theta_{\lambda f}^{\mathfrak C}(X^k)^2
+ \frac{9\gamma_k^2 L^2}{2\lambda} + \frac{2(\ell-2)\gamma_k^3 L^3}{\lambda} + \frac{(\ell-2)\gamma_k^4 L^4}{2\lambda}\\
&\textstyle\leq \left(f_\lambda^{\mathfrak C}(X^k) - f^\ast\right) - \frac{\lambda}{\ell}\left(\frac{\ell}{2\lambda}-(\tau+(2\ell-3) L)\right) \cdot\,\gamma_k \Theta_{\lambda f}^{\mathfrak C}(X^k)^2 + \frac{\gamma_k^2 L^2}{\lambda}\left(\frac{9}{2} + \frac{5}{2}(\ell-2)\right).
\end{align*}
Since $\sum_{k\ge0} \gamma_k^2 < \infty$, by the supermartingale convergence theorem~\cite[Theorem 1]{robbins1971convergence}, 
we will have $\sum_{k\ge0} \gamma_k \Theta_{\lambda f}^{\mathfrak C} (X^k)^2 < \infty$ $\mathbb{P}$-almost surely.
Thus, by Lemma \ref{convlem}, we will have $\lim\limits_{k \to \infty} \Theta_{\lambda f}^{\mathfrak C}(X^k) = 0$ $\mathbb{P}$-almost surely. In particular, every accumulation point of $\{X^k\}_k$ will be a stationary point of Problem~\eqref{mainpb} $\mathbb{P}$-almost surely.
\end{proof}

With a slightly more conservative choice of stepsizes, we obtain the following almost-sure asymptotic convergence rate result, which complements the in-expectation non-asymptotic convergence rate results in Theorem~\ref{thm-expectation}.

\begin{theorem}[Almost-Sure Asymptotic Convergence Rate] \label{thm-convrate}
Let $\Delta_{\mathsf{min}} > 0$ be a given constant and $\{\Delta_k\}_k$ be a sequence in $(\Delta_{\mathsf{min}}, \frac{\sqrt{2}\log2}{L})$. Let $\{X^k\}_k $ be the sequence of iterates generated by Algorithm~\ref{alg-RSSM} with stepsizes $\gamma_k = \frac{\Delta_k}{\sqrt{k+2} \log(k+2)}$ for $k\ge0$. Then, for any $\lambda \in (0, \frac{\ell}{2(\tau + (2\ell-1)L)})$, we will have $\liminf\limits_{k \to \infty} \sqrt[4]{k+2} \cdot \Theta_{\lambda f}^{\mathfrak C}(X^k) = 0$ $\mathbb{P}$-almost surely.  
\end{theorem}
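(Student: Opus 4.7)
The plan is to parallel the strategy used in the proof of Theorem~\ref{thm-asympconv}: combine the adaptive Moreau envelope sufficient decrease in Proposition~\ref{prop-suffdec} with the Robbins--Siegmund almost-supermartingale convergence theorem~\cite{robbins1971convergence} to obtain the almost-sure summability of $\{\gamma_k \Theta_{\lambda f}^{\mathfrak C}(X^k)^2\}_k$, and then to turn this summability into the asserted $\liminf$ rate by a direct contradiction argument that exploits the critical divergence $\sum_k \tfrac{1}{(k+2)\log(k+2)} = \infty$.

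First I would verify admissibility of the stepsizes. For $k=0$, the bound $\Delta_k < \sqrt{2}\log 2/L$ gives $\gamma_0 = \Delta_0/(\sqrt{2}\log 2) < 1/L$; for $k\ge 1$, monotonicity of $\sqrt{k+2}\log(k+2)$ makes the bound only easier, so $\gamma_k\in(0,1/L)$ holds for every $k$. Next I would collect the key summability facts via the integral test: $\sum_k \gamma_k^q \le C_q \sum_k \frac{1}{(k+2)^{q/2}\log^q(k+2)} < \infty$ for every integer $q\ge2$, while the crucial divergent series is $\sum_k \frac{1}{(k+2)\log(k+2)} = \infty$, since $\int\frac{du}{u\log u} = \log\log u$.

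The main step is to invoke Proposition~\ref{prop-suffdec} with $\lambda$ as prescribed and set $f^\ast = \min_{\Stiefel n p} f$, yielding
\begin{equation*}
\E\!\left[f_\lambda^{\mathfrak C}(X^{k+1}) - f^\ast \,\middle|\, X^k\right]
\le \bigl(f_\lambda^{\mathfrak C}(X^k) - f^\ast\bigr) - c\,\gamma_k \Theta_{\lambda f}^{\mathfrak C}(X^k)^2 + A_k,
\end{equation*}
where $c = \tfrac{\lambda}{\ell}\bigl(\tfrac{\ell}{2\lambda}-(\tau+(2\ell-3)L)\bigr) > 0$ and $A_k = \tfrac{9\gamma_k^2 L^2}{2\lambda} + \tfrac{2(\ell-2)\gamma_k^3 L^3}{\lambda} + \tfrac{(\ell-2)\gamma_k^4 L^4}{2\lambda}$ is a deterministic summable sequence by the previous step. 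The Robbins--Siegmund theorem then delivers, $\mathbb{P}$-almost surely, both the convergence of $\{f_\lambda^{\mathfrak C}(X^k) - f^\ast\}_k$ and
\begin{equation*}
\sum_{k\ge 0} \gamma_k\, \Theta_{\lambda f}^{\mathfrak C}(X^k)^2 < \infty.
\end{equation*}

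Finally, I would upgrade this to the $\liminf$ rate by contradiction. Writing $\{\liminf_k \sqrt[4]{k+2}\,\Theta_{\lambda f}^{\mathfrak C}(X^k) > 0\} = \bigcup_{n\ge 1} E_n$, where $E_n = \{\exists K,\ \forall k\ge K:\ \sqrt[4]{k+2}\,\Theta_{\lambda f}^{\mathfrak C}(X^k) \ge 1/n\}$, it suffices to show $\mathbb{P}(E_n)=0$ for every $n$. Fix $n$, set $\delta=1/n$, and suppose $\mathbb{P}(E_n)>0$. On $E_n$, eventually $\Theta_{\lambda f}^{\mathfrak C}(X^k)^2 \ge \delta^2/\sqrt{k+2}$, so
\begin{equation*}
\sum_{k\ge K} \gamma_k \Theta_{\lambda f}^{\mathfrak C}(X^k)^2 \;\ge\; \Delta_{\min}\,\delta^2 \sum_{k\ge K}\frac{1}{(k+2)\log(k+2)} = \infty,
\end{equation*}
contradicting the almost-sure summability obtained above. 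The main obstacle, and the reason for the specific $\sqrt{k+2}\log(k+2)$ denominator in the stepsize, is the delicate balance between needing $\sum \gamma_k^2 < \infty$ (so Robbins--Siegmund applies) and needing $\sum \gamma_k/\sqrt{k+2}=\infty$ (so the contradiction bites); the logarithmic factor is exactly what reconciles the two.
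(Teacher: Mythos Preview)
Your proposal is correct and follows essentially the same approach as the paper: apply Proposition~\ref{prop-suffdec} together with the Robbins--Siegmund supermartingale convergence theorem to obtain $\sum_k \gamma_k \Theta_{\lambda f}^{\mathfrak C}(X^k)^2 < \infty$ almost surely, and then derive the $\liminf$ rate by contradiction using the divergence of $\sum_k \frac{1}{(k+2)\log(k+2)}$. Your decomposition of the event $\{\liminf > 0\}$ into the countable union of the $E_n$'s is in fact a slightly cleaner way to set up the contradiction than the paper's version, but the substance is identical.
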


\begin{proof} 
Observe that
$\sum_{k=0}^K \gamma_k^2 \leq \gamma_0^2 + \int_0^K \frac{(\sqrt{2}\log 2 / L)^2}{(u+2) \log(u+2)^2}\,du \leq \gamma_0^2 + \frac{(\sqrt{2}\log 2 / L)^2}{\log 2} < \frac{4}{L^2}$ and $\sum_{k=0}^K \gamma_k \geq \sum_{k=0}^K \frac{\Delta_{\mathsf{min}}}{ \sqrt{K+2} \log(K+2)} = \frac{\Delta_{\textsf{min}} (K+1)}{\sqrt{K+2} \log(K+2)} \to +\infty$ as $K \to +\infty$. Let $\Lambda_\delta = \{ \omega \in \Lambda : \liminf_{k \to \infty} \sqrt{k+2} \cdot \Theta_{\lambda f}^{\mathfrak C}(X^k(\omega))^2 \geq \delta \}$ and assume to the contrary that $\mathbb P(\Lambda_\delta) > 0$ for some $\delta > 0$. Then, there exists a $\bar k\ge0$ such that $\mathbb P(\,\bigcap_{k \geq \bar k} \Lambda_{\delta, k}\,) > 0$, where $\Lambda_{\delta, k} = \{ \omega \in \Lambda : \sqrt{k+2} \cdot \Theta_{\lambda f}^{\mathfrak C}(X^k(\omega))^2 \geq \delta \}$. This implies that
\begin{align*}
\textstyle
\mathbb P\left(\,\left\{\,\omega \in \Lambda : \sum_{k \geq \bar k} \frac{\Theta_{\lambda f}^{\mathfrak C}(X^k(\omega))^2}{\sqrt{k+2}\,\log(k+2)} \geq \sum_{k \geq \bar k} \frac{\delta}{(k+2) \log(k+2)}\,\right\}\,\right) \geq \mathbb P\left(\bigcap_{k\geq \bar k} \Lambda_{\delta, k}\right) > 0.
\end{align*}
Note that $\sum_{k \geq \bar k} \frac{1}{(k+2) \log (k+2)} \geq \int_{\bar k + 3}^\infty \frac{du}{u \log u} = \infty$. However, as shown in the proof of Theorem~\ref{thm-asympconv}, we will have $\sum_{k\ge0} \gamma_k \Theta_{\lambda f}^{\mathfrak C}(X^k(\omega))^2 < \infty$ $\mathbb{P}$-almost surely. Thus, we obtain a contradiction.
\end{proof}

\section{Numerical Results} \label{sect-exp}
In this section, we present numerical results to demonstrate the performance of our proposed RSSM on the 
dual principal component pursuit (DPCP) formulation of robust subspace recovery (RSR) and on orthogonal dictionary learning (ODL). We also compare the performance of RSSM with that of the Riemannian subgradient method (RSM) \cite{li2021weakly}.

\subsection{DPCP Formulation of Robust Subspace Recovery}

The DPCP formulation of robust subspace recovery aims to identify a low-dimensional linear subspace for a dataset corrupted by outliers, which plays a fundamental role in machine learning, signal processing, and statistics. In robust subspace recovery, one is given some measurements $\widetilde Y = [\,Y\ O\,]\,\Gamma \in \mathbb{R}^{n \times m}$, where the columns of $Y \in \mathbb{R}^{n \times m_1}$ form inlier points spanning a $d$-dimensional subspace $\mathcal{S}$, the columns of $O \in \mathbb{R}^{n \times m_2}$ form outlier points with no linear structure, $\Gamma \in \mathbb{R}^{m \times m}$ is an unknown permutation, and $m = m_1 + m_2$.
To recover the subspace $\mathcal{S}$ via obtaining an orthonormal basis for $\mathcal{S}^\perp$, one could adopt the \emph{holistic approach}~\cite{ding2021dual} and solve

\begin{align} \label{eq-RSR}
    \min_{X \in \mathbb{R}^{n \times (n-d)}} \; f(X) = \frac{1}{m} \sum_{j=1}^m \|\tilde y_j^\top X\|\ \text{ s.t. }\ X \in \Stiefel n {n-d}.
\end{align}

In our experiments, we first generate a random matrix $S \in \Stiefel n {d}$ with $p = n-d = 90$ and $n = 100$. We also generate $m_1 = 1500$ inlier points by $Y = SR$ with $R \in \Stiefel d {m_1}$ and $m_2 = 3500$ outlier points $O \in \Stiefel n {m_2}$ at random. For both RSM and RSSM, we use the same initialization 
$X^0 = \argmin_{X \in \Stiefel n p} 
\| \widetilde Y^\top X \|^2$, where $\widetilde Y = [\,Y\ O\,]\,\Gamma$ for some random permutation matrix $\Gamma$. We partition the columns into $3$, $5$, or $10$ blocks (i.e., $\ell = 3, 5, 10$) and adopt the stepsizes $\gamma_k = \frac{\Delta_k}{\sqrt{k+2}\log(k+2)}$ with $\Delta_k = 0.9$ for RSM and $\Delta_k = 0.9 \binom{\ell}{2}^{2(0.991)^k-1} $ for RSSM for $k\ge0$.

\begin{figure}[ht]
\begin{minipage}[t]{0.5\linewidth} 
\centering 
\includegraphics[scale=0.40]{./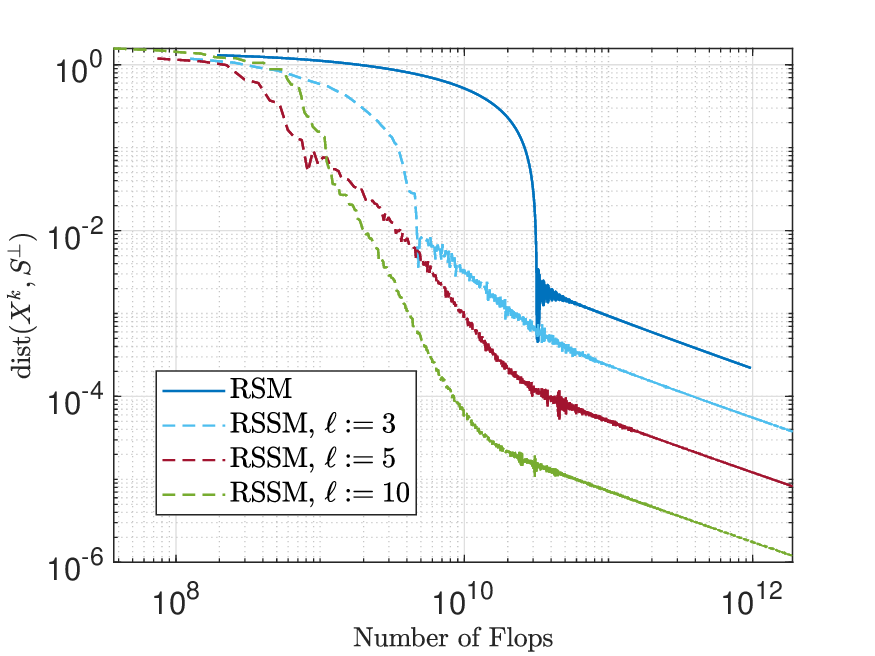} 
\end{minipage}%
\begin{minipage}[t]{0.5\linewidth} 
\centering 
\includegraphics[scale=0.40]{./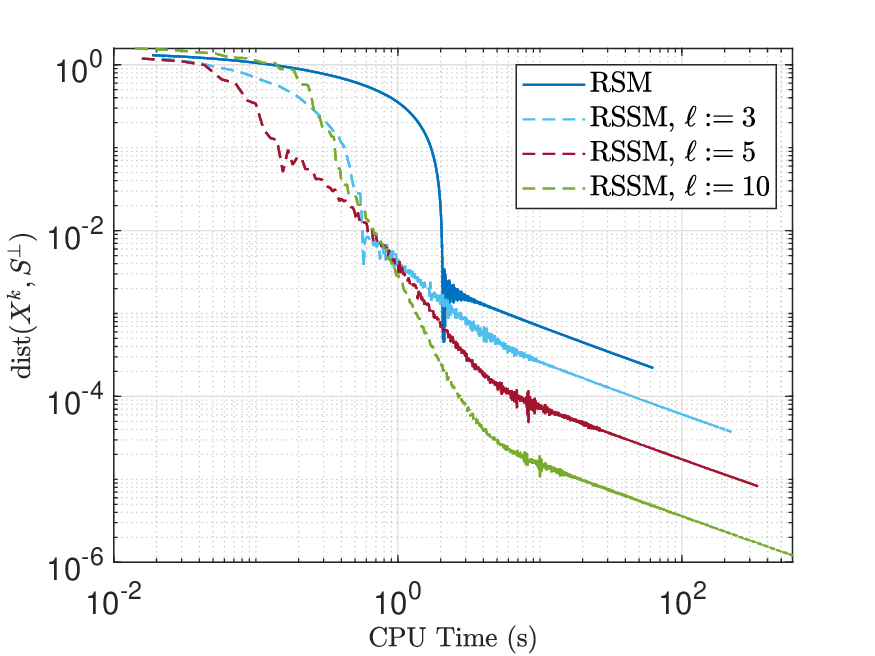}
\end{minipage} 
\vspace{-0.5cm}
\caption{Robust subspace recovery: RSM vs RSSM ($\Delta_k = 0.9 \binom{\ell}{2}^{2(0.991)^k-1} \in [0.02, 40.5]$).}
\label{fig-rsr1}
\end{figure}

We measure the performance by the \emph{estimation error}
\begin{align*}
    \dist(X,S^\perp) &= \min_{R \in \Orth p} \|X-S^\perp R\| = \sqrt{2(p-\|X^\top S^\perp\|_{\ast})}  = \sqrt{2(p-\|(I-SS^\top)X\|_{\ast})},
\end{align*}
where $\Orth p = \Stiefel p p$.
Figure~\ref{fig-rsr1} shows the log-log plots of estimation error against flops and CPU time. We observe that RSSM outperforms RSM in terms of both speed and accuracy.

\subsection{Orthogonal Dictionary Learning}

ODL aims to decompose a data matrix $Y \in \R^{n \times m}$ into the multiplication of an orthonormal dictionary and a sparse matrix, i.e., $Y = XS$ with $X\in \Stiefel n n$ and each column of $S \in \R^{n \times m}$ being sparse. To recover the dictionary $X$, a natural optimization formulation is $\min_{X \in \Stiefel n n} \|Y^\top X \|_1$. 

\begin{figure}[ht]
\begin{minipage}[t]{0.5\linewidth} 
\centering 
\includegraphics[scale=0.40]{./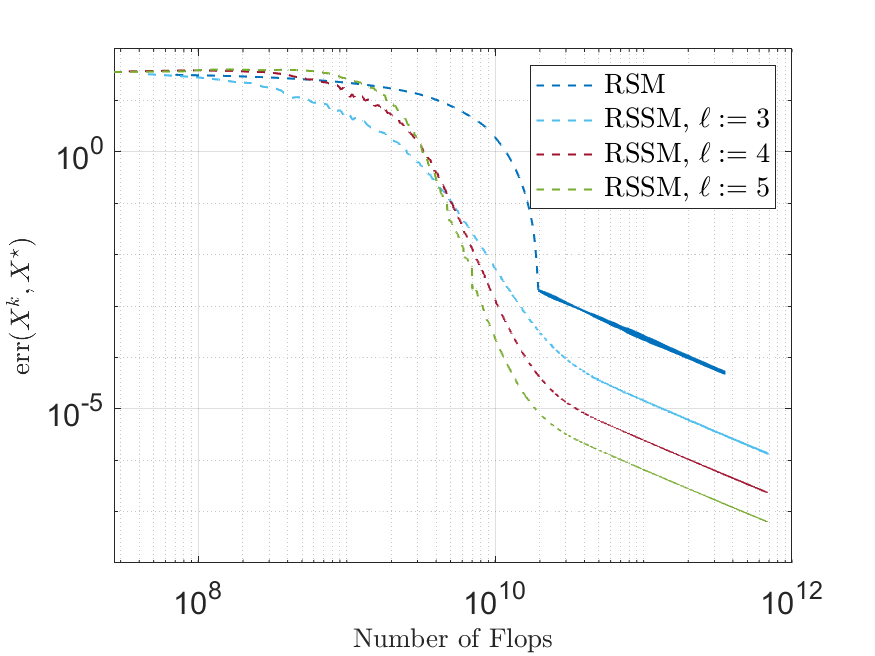} 
\end{minipage}%
\begin{minipage}[t]{0.5\linewidth} 
\centering 
\includegraphics[scale=0.40]{./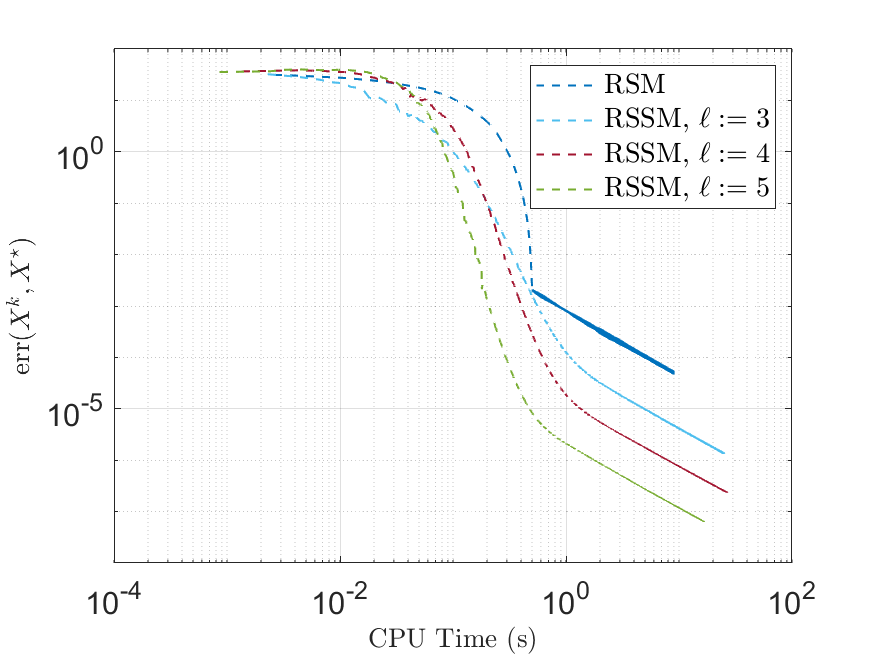}
\end{minipage} 
\vspace{-0.5cm}
\caption{Orthogonal dictionary learning: RSM vs RSSM ($\Delta_k = 10^{-3} \binom{\ell}{2}^{4(0.995)^k-1}$).}
\label{fig-odl1}
\end{figure}

We generate synthetic data as in \cite{bai2018subgradient, li2021weakly}. First, the true orthogonal dictionary $X^\star \in \Orth n$ with $n = 60$, is generated randomly according to the uniform distribution. The sample size is $m = 4648 \approx 10 \times n^{1.5}$. Then we generate a sparse matrix $S \in \R^{n \times m}$ with i.i.d. entries $S_{ij} \sim \mathcal N(0,1)$ with probability $0.3$ and $S_{ij} = 0$ otherwise. The observation is thus $Y = X^\star S$. Next, we generate a uniformly random orthogonal matrix $X^0$ on $\Stiefel n n$ to initialize the algorithms. 
We partition the columns into $3, 5$ or $10$ blocks, i.e., $\ell = 3, 5, 10$, and adopt the stepsize $\gamma_k = \frac{\Delta_k}{\sqrt{k+2}\log(k+2)}$ with $\Delta_k = 10^{-3}$ for RSM and $\Delta_k = 10^{-3} \binom{\ell}{2}^{4(0.995)^k-1} $ for RSSM. 

To evaluate the performance of the algorithms, we define the error between $X$ and $X^\star$ as $\mathrm{err}(X, X^\star) = \sum_{i=1}^n \left|\max_{1\leq j\leq n} |x_i^\top x^\star_{j}|-1\right|$, where $x_i$ and $x_j^\star$ denote the $i$-th and $j$-th columns of $X$ and $X^\star$ respectively. Figure \ref{fig-odl1} shows the log-log plots of error against flops and CPU time. Once again, we observe that RSSM outperforms RSM in terms of both speed and accuracy.

\section{Conclusion}

In this paper, we proposed a new coordinate-type algorithm RSSM for solving nonsmooth weakly convex optimization problems over high dimensional Stiefel manifolds.
The main idea of RSSM is that at each iteration, it decomposes the Stiefel manifold into submanifold blocks and performs a retracted partial Riemannian subgradient step with respect to a randomly selected submanifold block. RSSM enjoys a low per-iteration cost and is especially suitable for high-dimensional applications. Furthermore, we showed that RSSM converges to the set of stationary points at a sublinear rate. To the best of our knowledge, RSSM is the first feasible, coordinate-type algorithm for nonsmooth weakly convex optimization over Stiefel manifolds.




\bibliographystyle{alpha}
\bibliography{references}

\newcommand{\etalchar}[1]{$^{#1}$}
\begin{thebibliography}{LWW{\etalchar{+}}15}

\bibitem[AM12]{absil2012projection}
P.-A. Absil and J\'{e}r\^{o}me Malick.
\newblock Projection-like retractions on matrix manifolds.
\newblock {\em SIAM Journal on Optimization}, 22(1):135--158, 2012.

\bibitem[AMS09]{absil2009optimization}
P-A Absil, Robert Mahony, and Rodolphe Sepulchre.
\newblock Optimization algorithms on matrix manifolds.
\newblock In {\em Optimization Algorithms on Matrix Manifolds}. Princeton University Press, 2009.

\bibitem[ANT16]{adly2016preservation}
Samir Adly, Florent Nacry, and Lionel Thibault.
\newblock Preservation of prox-regularity of sets with applications to constrained optimization.
\newblock {\em SIAM Journal on Optimization}, 26(1):448--473, 2016.

\bibitem[Bil95]{billingsley1995probability}
Patrick Billingsley.
\newblock {\em Probability and Measure}.
\newblock Wiley Series in Probability and Mathematical Statistics. John Wiley \& Sons, Inc., third edition, 1995.

\bibitem[BJS19]{bai2018subgradient}
Yu~Bai, Qijia Jiang, and Ju~Sun.
\newblock Subgradient descent learns orthogonal dictionaries.
\newblock In {\em The 7th International Conference on Learning Representations}, 2019.

\bibitem[Bou23]{boumal2023introduction}
Nicolas Boumal.
\newblock {\em An Introduction to Optimization on Smooth Manifolds}.
\newblock Cambridge University Press, 2023.

\bibitem[CMSZ24]{chen2024nonsmooth}
Shixiang Chen, Shiqian Ma, Anthony Man-Cho So, and Tong Zhang.
\newblock Nonsmooth optimization over the {S}tiefel manifold and beyond: Proximal gradient method and recent variants.
\newblock {\em SIAM Review}, 66(2):319--352, 2024.

\bibitem[CSW95]{clarke1995proximal}
Francis~H Clarke, Ronald~J Stern, and Peter~R Wolenski.
\newblock Proximal smoothness and the lower-{$C^2$} property.
\newblock {\em Journal of Convex Analysis}, 2(1-2):117--144, 1995.

\bibitem[CWYS24]{cheung2024randomized}
Andy Yat-Ming Cheung, Jinxin Wang, Man-Chung Yue, and Anthony Man-Cho So.
\newblock Randomized submanifold subgradient method for optimization over {S}tiefel manifolds.
\newblock {\em arXiv preprint arXiv:2409.01770}, 2024.

\bibitem[DR23]{darmwal2023low}
Yogesh Darmwal and Ketan Rajawat.
\newblock Low-complexity subspace-descent over symmetric positive definite manifold.
\newblock {\em arXiv preprint arXiv:2305.02041}, 2023.

\bibitem[DZVR21]{ding2021dual}
Tianyu Ding, Zhihui Zhu, Ren{\'e} Vidal, and Daniel~P Robinson.
\newblock Dual principal component pursuit for robust subspace learning: Theory and algorithms for a holistic approach.
\newblock In {\em Proceedings of the 38th International Conference on Machine Learning}, pages 2739--2748, 2021.

\bibitem[GD04]{gower2004procrustes}
John~C Gower and Garmt~B Dijksterhuis.
\newblock {\em Procrustes Problems}, volume~30 of {\em Oxford Statistical Science Series}.
\newblock Oxford University Press, 2004.

\bibitem[GHN22]{gutman2022coordinate}
David~H Gutman and Nam Ho-Nguyen.
\newblock Coordinate descent without coordinates: Tangent subspace descent on {Riemannian} manifolds.
\newblock {\em Mathematics of Operations Research}, 48(1):127--159, 2022.

\bibitem[GLY19]{gao2019parallelizable}
Bin Gao, Xin Liu, and Yaxiang Yuan.
\newblock Parallelizable algorithms for optimization problems with orthogonality constraints.
\newblock {\em SIAM Journal on Scientific Computing}, 41(3):A1949--A1983, 2019.

\bibitem[GP10]{guillemin2010differential}
Victor Guillemin and Alan Pollack.
\newblock {\em Differential Topology}, volume 370.
\newblock American Mathematical Society, 2010.

\bibitem[GVL13]{golub2013matrix}
Gene~H Golub and Charles~F Van~Loan.
\newblock {\em Matrix Computations}.
\newblock The Johns Hopkins University Press, fourth edition, 2013.

\bibitem[HJM24]{han2024riemannian}
Andi Han, Pratik Jawanpuria, and Bamdev Mishra.
\newblock Riemannian coordinate descent algorithms on matrix manifolds.
\newblock In {\em Proceedings of the 41st International Conference on Machine Learning}, pages 17393--17415, 2024.

\bibitem[HML21a]{huang2021projection}
Minhui Huang, Shiqian Ma, and Lifeng Lai.
\newblock Projection robust {Wasserstein} barycenters.
\newblock In {\em Proceedings of the 38th International Conference on Machine Learning}, pages 4456--4465, 2021.

\bibitem[HML21b]{huang2021riemannian}
Minhui Huang, Shiqian Ma, and Lifeng Lai.
\newblock A {Riemannian} block coordinate descent method for computing the projection robust {Wasserstein} distance.
\newblock In {\em Proceedings of the 38th International Conference on Machine Learning}, pages 4446--4455, 2021.

\bibitem[HPT25]{hanefficient}
Andi Han, Pierre-Louis Poirion, and Akiko Takeda.
\newblock {Efficient optimization with orthogonality constraint: a randomized Riemannian submanifold method}.
\newblock 2025.

\bibitem[JZQ{\etalchar{+}}22]{jiang2022givens}
Yunjiang Jiang, Han Zhang, Yiming Qiu, Yun Xiao, Bo~Long, and Wen-Yun Yang.
\newblock Givens coordinate descent methods for rotation matrix learning in trainable embedding indexes.
\newblock In {\em The 10th International Conference on Learning Representations}, 2022.

\bibitem[LCD{\etalchar{+}}21]{li2021weakly}
Xiao Li, Shixiang Chen, Zengde Deng, Qing Qu, Zhihui Zhu, and Anthony Man-Cho So.
\newblock Weakly convex optimization over {Stiefel} manifold using {Riemannian} subgradient-type methods.
\newblock {\em SIAM Journal on Optimization}, 31(3):1605--1634, 2021.

\bibitem[LJW{\etalchar{+}}19]{li2019orthogonal}
Shuai Li, Kui Jia, Yuxin Wen, Tongliang Liu, and Dacheng Tao.
\newblock Orthogonal deep neural networks.
\newblock {\em IEEE Transactions on Pattern Analysis and Machine Intelligence}, 43(4):1352--1368, 2019.

\bibitem[LSW19]{liu2019quadratic}
Huikang Liu, Anthony Man-Cho So, and Weijie Wu.
\newblock {Quadratic optimization with orthogonality constraint: Explicit {\L}ojasiewicz exponent and linear convergence of retraction-based line-search and stochastic variance-reduced gradient methods}.
\newblock {\em Mathematical Programming}, 178:215--262, 2019.

\bibitem[LWW{\etalchar{+}}15]{liu2015analysis}
Xin Liu, Zaiwen Wen, Xiao Wang, Michael Ulbrich, and Yaxiang Yuan.
\newblock {On the analysis of the discretized Kohn--Sham density functional theory}.
\newblock {\em SIAM Journal on Numerical Analysis}, 53(4):1758--1785, 2015.

\bibitem[MA22]{massart2022coordinate}
Estelle Massart and Vinayak Abrol.
\newblock Coordinate descent on the orthogonal group for recurrent neural network training.
\newblock In {\em Proceedings of the AAAI Conference on Artificial Intelligence}, volume~36, pages 7744--7751, 2022.

\bibitem[Nes12]{nesterov2012efficiency}
Yu~Nesterov.
\newblock Efficiency of coordinate descent methods on huge-scale optimization problems.
\newblock {\em SIAM Journal on Optimization}, 22(2):341--362, 2012.

\bibitem[PV23]{peng2023block}
Liangzu Peng and Ren{\'e} Vidal.
\newblock Block coordinate descent on smooth manifolds.
\newblock {\em arXiv preprint arXiv:2305.14744}, 2023.

\bibitem[RS71]{robbins1971convergence}
H.~Robbins and D.~Siegmund.
\newblock A convergence theorem for non negative almost supermartingales and some applications.
\newblock In Jagdish~S. Rustagi, editor, {\em Optimizing Methods in Statistics}, pages 233--257. Academic Press, 1971.

\bibitem[RW09]{rockafellar2009variational}
R~Tyrrell Rockafellar and Roger J-B Wets.
\newblock {\em Variational Analysis}, volume 317.
\newblock Springer Science \& Business Media, 2009.

\bibitem[SC14]{shalit2014coordinate}
Uri Shalit and Gal Chechik.
\newblock Coordinate-descent for learning orthogonal matrices through {G}ivens rotations.
\newblock In {\em Proceedings of the 31st International Conference on Machine Learning}, pages 548--556, 2014.

\bibitem[SI13]{sato2013riemannian}
Hiroyuki Sato and Toshihiro Iwai.
\newblock {A Riemannian optimization approach to the matrix singular value decomposition}.
\newblock {\em SIAM Journal on Optimization}, 23(1):188--212, 2013.

\bibitem[TCA09]{theis2009soft}
Fabian~J Theis, Thomas~P Cason, and P~A Absil.
\newblock {Soft dimension reduction for ICA by joint diagonalization on the Stiefel manifold}.
\newblock In {\em Proceedings of the 8th International Conference on Independent Component Analysis and Signal Separation}, pages 354--361, 2009.

\bibitem[TKRH21]{tian2021distributed}
Yulun Tian, Kasra Khosoussi, David~M Rosen, and Jonathan~P How.
\newblock Distributed certifiably correct pose-graph optimization.
\newblock {\em IEEE Transactions on Robotics}, 37(6):2137--2156, 2021.

\bibitem[Via83]{vial1983strong}
Jean-Philippe Vial.
\newblock Strong and weak convexity of sets and functions.
\newblock {\em Mathematics of Operations Research}, 8(2):231--259, 1983.

\bibitem[VTYN22]{vu2022distributionally}
Hieu Vu, Toan Tran, Man-Chung Yue, and Viet~Anh Nguyen.
\newblock Distributionally robust fair principal components via geodesic descents.
\newblock In {\em The 10th International Conference on Learning Representations}, 2022.

\bibitem[WHC{\etalchar{+}}23]{wang2023decentralized}
Jinxin Wang, Jiang Hu, Shixiang Chen, Zengde Deng, and Anthony Man-Cho So.
\newblock Decentralized weakly convex optimization over the {Stiefel} manifold.
\newblock {\em arXiv preprint arXiv:2303.17779}, 2023.

\bibitem[Wri15]{wright2015coordinate}
Stephen~J Wright.
\newblock Coordinate descent algorithms.
\newblock {\em Mathematical Programming}, 151(1):3--34, 2015.

\bibitem[YZS14]{yang2014optimality}
Wei~Hong Yang, Lei-Hong Zhang, and Ruyi Song.
\newblock Optimality conditions for the nonlinear programming problems on {Riemannian} manifolds.
\newblock {\em Pacific Journal of Optimization}, 10(2):415--434, 2014.

\bibitem[ZCZL22]{zhao2022randomized}
Lei Zhao, Ding Chen, Daoli Zhu, and Xiao Li.
\newblock Randomized coordinate subgradient method for nonsmooth optimization.
\newblock {\em arXiv preprint arXiv:2206.14981}, 2022.

\bibitem[ZWR{\etalchar{+}}18]{zhu2018dual}
Zhihui Zhu, Yifan Wang, Daniel Robinson, Daniel Naiman, Rene Vidal, and Manolis Tsakiris.
\newblock Dual principal component pursuit: Improved analysis and efficient algorithms.
\newblock {\em Advances in Neural Information Processing Systems}, 31, 2018.

\end{thebibliography}

\appendix

\section{Technical Lemmas}

\begin{lemma}
\label{lem-commutative}
Let $X \in \Stiefel n p$ be fixed and $\mathfrak C$ be a partition of $[p]$ with $\ell = |\mathfrak C| \geq 2$. Then, for any $\xi \in \R^{n\times p}$, we have
\begin{align}
    \label{eq-commutative} 
    &\textstyle\mathcal P_{T_X \Stiefel n p} \circ \mathcal A_X(\xi) = \mathcal A_X \circ \mathcal P_{T_X\Stiefel n p}(\xi) = \frac{1}{\binom{\ell}{2}}\,\sum\limits_{\{i,j\} \in \binom{[\ell]}{2}}\mathcal P_{T_{X_{ij}}\mathcal M_{X_{-ij}}}(\xi_{ij})I_{ij}^\top,\\
    \label{eq-commutative1}
    &\textstyle \mathcal P_{T_{X_{ij}}\mathcal M_{X_{-ij}}}\left(\mathcal A_X(\xi)I_{ij}\right)I_{ij}^\top = \mathcal A_X\left(\mathcal P_{T_{X_{ij}}\mathcal M_{X_{-ij}}}(\xi_{ij})I_{ij}^\top\right).
\end{align}
\end{lemma}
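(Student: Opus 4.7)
My plan is to verify both identities by computing in the orthogonal decomposition $\R^{n\times p} = X\R^{p\times p} \oplus X_\perp\R^{(n-p)\times p}$, where $X_\perp \in \Stiefel n {n-p}$ completes the columns of $X$ to an orthonormal basis of $\R^n$. Writing $\xi = XA + X_\perp B$ with $A = X^\top\xi$ and $B = X_\perp^\top\xi$, the coordinate representation~\eqref{eq-scaling-coordrep} reads $\mathcal A_X(\xi) = X\bigl(\binom{\ell}{2}^{-1} Q \boxdot A\bigr) + X_\perp\bigl(\frac{2}{\ell}B\bigr)$, while $\mathcal P_{T_X\Stiefel n p}(\xi) = X\Skew{A} + X_\perp B$. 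The one structural fact that drives everything is that $Q = J + (\ell-2)I$ is symmetric, so $(Q \boxdot M)^\top = Q \boxdot M^\top$ for every $M \in \R^{p\times p}$; in particular $Q\boxdot(\cdot)$ commutes with both symmetrization and skew-symmetrization.

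For the commutativity $\mathcal P_{T_X\Stiefel n p} \circ \mathcal A_X = \mathcal A_X \circ \mathcal P_{T_X\Stiefel n p}$ in~\eqref{eq-commutative}, this observation is already enough: both compositions evaluate to $X\bigl(\binom{\ell}{2}^{-1} Q \boxdot \Skew{A}\bigr) + X_\perp\bigl(\frac{2}{\ell}B\bigr)$, since the tangent projection discards the symmetric part of the $X$-block and leaves the $X_\perp$-block alone. To identify this common value with $\binom{\ell}{2}^{-1}\sum_{\{i,j\}}\mathcal P_{T_{X_{ij}}\mathcal M_{X_{-ij}}}(\xi_{ij})I_{ij}^\top$, I would substitute $\eta = \mathcal P_{T_X\Stiefel n p}(\xi)$ directly into the sum-form definition~\eqref{eq-scalingoperator}. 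Using the decomposition $I - X_{-ij}X_{-ij}^\top = X_{ij}X_{ij}^\top + X_\perp X_\perp^\top$ together with $I_{ij}^\top \Sym{X^\top\xi}\,I_{ij} = \Sym{X_{ij}^\top\xi_{ij}}$, a short direct calculation yields
\[
(I - X_{-ij}X_{-ij}^\top)\eta_{ij} = X_{ij}\Skew{X_{ij}^\top\xi_{ij}} + X_\perp X_\perp^\top\xi_{ij},
\]
which is exactly $\mathcal P_{T_{X_{ij}}\mathcal M_{X_{-ij}}}(\xi_{ij})$. Averaging over $\{i,j\}$ gives the claimed formula.

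For~\eqref{eq-commutative1}, my approach is again to extract and match the $X$- and $X_\perp$-components of both sides. Let $T = \Skew{X_{ij}^\top\xi_{ij}}$. Applying $X^\top$ to the right-hand side and using~\eqref{eq-scaling-coordrep} gives $\binom{\ell}{2}^{-1} Q \boxdot (I_{ij}TI_{ij}^\top)$, a matrix supported on the $C_{ij}\times C_{ij}$ block. Applying $X^\top$ to the left-hand side and using $X^\top X_{ij} = I_{ij}$ yields $I_{ij}\Skew{I_{ij}^\top\bigl(\binom{\ell}{2}^{-1} Q \boxdot (X^\top\xi)\bigr)I_{ij}}\,I_{ij}^\top$, again supported on $C_{ij}\times C_{ij}$. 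To check equality I would split the $C_{ij}\times C_{ij}$ block into the four sub-blocks $C_i\times C_i$, $C_i\times C_j$, $C_j\times C_i$, $C_j\times C_j$: on the diagonal sub-blocks both sides reduce to $\frac{2}{\ell}\Skew{X_i^\top\xi_i}$ and $\frac{2}{\ell}\Skew{X_j^\top\xi_j}$, while on the off-diagonal sub-blocks both reduce to $\binom{\ell}{2}^{-1}\cdot\tfrac{1}{2}(X_i^\top\xi_j - \xi_i^\top X_j)$ and its negative transpose. The $X_\perp$-components of both sides reduce to $\frac{2}{\ell}X_\perp^\top\xi\,I_{ij}I_{ij}^\top$ by inspection, using $X_\perp^\top X_\perp = I$ and $X_\perp^\top X_{ij} = 0$.

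The main obstacle I anticipate is the block bookkeeping in~\eqref{eq-commutative1}: $Q\boxdot$ scales diagonal blocks by $\ell-1$ and off-diagonal blocks by $1$, so one must verify carefully that restriction to $C_{ij}\times C_{ij}$ interacts consistently with this rescaling and the subsequent skew-symmetrization on both sides. Once the ``symmetric $Q$ passes through transposition'' principle is isolated, the verification reduces to a short case-split over the four sub-blocks of $C_{ij}\times C_{ij}$.
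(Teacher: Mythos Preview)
Your proposal is correct and follows essentially the same route as the paper's proof: both establish the commutativity in~\eqref{eq-commutative} via the coordinate representation~\eqref{eq-scaling-coordrep} and the observation that the symmetric $Q$ makes $Q\boxdot(\cdot)$ commute with $\Skew{\cdot}$, and both verify~\eqref{eq-commutative1} by direct computation of the $X$- and $X_\perp$-components. Your block-by-block verification for~\eqref{eq-commutative1} simply makes explicit the step the paper compresses into a single equality, namely $I_{ij}\Skew{I_{ij}^\top(Q\boxdot A)I_{ij}}I_{ij}^\top = Q\boxdot(I_{ij}\Skew{X_{ij}^\top\xi_{ij}}I_{ij}^\top)$.
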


\begin{proof} Let $\xi \in \R^{n \times p}$ be fixed. On one hand, a direct computation gives
\begin{align*}
\mathcal A_{X}\left(\mathcal P_{T_X \Stiefel n p}(\xi)\right) 
&\textstyle= \frac{1}{\binom{\ell}{2}} \sum\limits_{\{i,j\} \in \binom{[\ell]}{2}} (I-X_{-ij}X_{-ij}^\top)\left(X\Skew{X^\top\xi} + (I-XX^\top) \xi\right) I_{ij} I_{ij}^\top\\
&\textstyle= \frac{1}{\binom{\ell}{2}} \sum\limits_{\{i,j\} \in \binom{[\ell]}{2}} \left(X_{ij} I_{ij}^\top \Skew{X^\top \xi} I_{ij} + (I-XX^\top)\xi_{ij}\right)I_{ij}^\top\\
&\textstyle
= \frac{1}{\binom{\ell}{2}}\,\sum\limits_{\{i,j\} \in \binom{[\ell]}{2}}\mathcal P_{T_{X_{ij}}\mathcal M_{X_{-ij}}}(\xi_{ij})I_{ij}^\top.
\end{align*}
On the other hand, using~\eqref{eq-scaling-coordrep} and the fact that $XX^\top + X_\perp X_\perp^\top = I$, we have
$$\textstyle\mathcal A_X(\xi) = X \left(\binom{\ell}{2}^{-1} Q \boxdot (X^\top \xi)\right) + (I-XX^\top)(\frac{2}{\ell}\,\xi).$$ 
This gives
\begin{align*}
\mathcal P_{T_X \Stiefel n p}\left(\mathcal A_X(\xi)\right)
&\textstyle= X \Skew{ \binom{\ell}{2}^{-1} Q \boxdot (X^\top \xi) } + (I-XX^\top)(\frac{2}{\ell}\,\xi)\\
&\textstyle= X\,\left(\binom{\ell}{2}^{-1} Q \boxdot \Skew{X^\top \xi} \right) + (I-XX^\top)(\frac{2}{\ell}\,\xi)
= \mathcal A_X \left( \mathcal P_{T_X\Stiefel n p}(\xi) \right).
\end{align*}
Hence, \eqref{eq-commutative} holds. Next, we compute 
\begin{align*}
    \textstyle \mathcal P_{T_{X_{ij}}\mathcal M_{X_{-ij}}}\left(\mathcal A_X(\xi)I_{ij}\right)I_{ij}^\top
    &\textstyle= \mathcal P_{T_{X_{ij}}\mathcal M_{X_{-ij}}}\left(\left(X \left(\binom{\ell}{2}^{-1}\,Q \boxdot (X^\top \xi)\right) + X_\perp (\frac{2}{\ell} X_\perp^\top \xi) \right)I_{ij}\right)I_{ij}^\top\\
    &\textstyle= X \left(\binom{\ell}{2}^{-1} Q \boxdot (I_{ij} \Skew{X_{ij}^\top\xi_{ij}} I_{ij}^\top)\right) + (I-XX^\top)(\frac{2}{\ell}\,\xi_{ij})I_{ij}^\top,\\ 
    \textstyle
    \mathcal A_X\!\left(\mathcal P_{T_{X_{ij}}\mathcal M_{X_{-ij}}}(\xi_{ij})I_{ij}^\top\right)
    &\textstyle= \mathcal A_X \left(X (I_{ij} \Skew{X_{ij}^\top\xi_{ij})I_{ij}^\top} + X_\perp (X_\perp^\top \xi_{ij}I_{ij}^\top)\right)\\
    &\textstyle= X \left(\binom{\ell}{2}^{-1} Q \boxdot (I_{ij} \Skew{X_{ij}^\top\xi_{ij})I_{ij}^\top} \right) + (I-XX^\top)(\frac{2}{\ell}\,\xi_{ij})I_{ij}^\top.
\end{align*}
It follows that~\eqref{eq-commutative1} holds.
\end{proof}

\begin{lemma} \label{lem-selfadjoint}
Let $X \in \Stiefel n p$ be fixed and $\mathfrak C$ be a partition of $[p]$ with $\ell = |\mathfrak C| \geq 2$. The averaging operator $\mathcal A_X$ is self-adjoint and has eigenvalues $\frac{2}{\ell}$ and $\binom{\ell}{2}^{-1}$ with multiplicities $(n-p)p+\sum_{i=1}^\ell p_i^2$ and $p^2 - \sum_{i=1}^\ell p_i^2$, respectively. 
\end{lemma}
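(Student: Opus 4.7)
The plan is to exploit the explicit coordinate representation~\eqref{eq-scaling-coordrep} together with the orthogonal decomposition $\R^{n\times p} = X\R^{p\times p} \oplus X_\perp \R^{(n-p)\times p}$ induced by the isometry $\Phi : (A,B) \mapsto XA + X_\perp B$ (which is an isometry because $[X\ X_\perp]$ is orthogonal). For $\xi = XA + X_\perp B$, formula~\eqref{eq-scaling-coordrep} gives
\begin{equation*}
\mathcal A_X(\xi) = X\bigl[\tbinom{\ell}{2}^{-1}(Q \boxdot A)\bigr] + X_\perp\bigl[\tfrac{2}{\ell} B\bigr],
\end{equation*}
so $\mathcal A_X$ is block-diagonal with respect to $\Phi$. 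This immediately reduces the problem to analysing two operators: the scalar $\tfrac{2}{\ell}$ acting on $\R^{(n-p)\times p}$, and the map $\mathcal T : A \mapsto \binom{\ell}{2}^{-1}(Q \boxdot A)$ on $\R^{p\times p}$.

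The $X_\perp$-block is trivial: scalar multiplication is self-adjoint and contributes the eigenvalue $\tfrac{2}{\ell}$ with multiplicity $(n-p)p$. For $\mathcal T$, I would further decompose $\R^{p\times p}$ according to the partition $\mathfrak C$ as $\R^{p\times p} = \mathcal D \oplus \mathcal D^\perp$, where $\mathcal D$ is the subspace of block-diagonal matrices (those $A$ supported on the diagonal blocks $[p_i]\times[p_i]$, of dimension $\sum_{i=1}^{\ell} p_i^2$) and $\mathcal D^\perp$ is its Frobenius-orthogonal complement, the block-off-diagonal matrices (of dimension $p^2 - \sum_{i=1}^{\ell} p_i^2$). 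Since $Q = J + (\ell-2)I$ has entries $q_{ii} = \ell-1$ and $q_{ij} = 1$ for $i \ne j$, the block Hadamard product scales $A_{ii}$ by $\ell-1$ and $A_{ij}$ by $1$; after dividing by $\binom{\ell}{2}$ this yields eigenvalue $\tfrac{\ell-1}{\binom{\ell}{2}} = \tfrac{2}{\ell}$ on $\mathcal D$ and $\binom{\ell}{2}^{-1}$ on $\mathcal D^\perp$. Self-adjointness of $\mathcal T$ follows from a direct index calculation: $\langle Q \boxdot A, B\rangle = \sum_{i,j} q_{ij}\langle A_{ij}, B_{ij}\rangle = \langle A, Q\boxdot B\rangle$ because $Q$ is symmetric.

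Combining the two blocks via $\Phi$ preserves both self-adjointness and eigenvalues, so $\mathcal A_X$ is self-adjoint and its spectrum is $\{\tfrac{2}{\ell}, \binom{\ell}{2}^{-1}\}$ with multiplicities $(n-p)p + \sum_{i=1}^{\ell} p_i^2$ and $p^2 - \sum_{i=1}^{\ell} p_i^2$, respectively, as claimed (and positive definiteness is automatic from the positivity of both eigenvalues). The only subtle point is verifying that the diagonal-block eigenvalue $\tfrac{\ell-1}{\binom{\ell}{2}}$ coincides with the $X_\perp$-block eigenvalue $\tfrac{2}{\ell}$, which is the arithmetic identity that makes the multiplicities aggregate correctly; everything else is bookkeeping through the isometry $\Phi$.
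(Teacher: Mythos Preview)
Your proposal is correct and follows essentially the same approach as the paper: both use the coordinate representation~\eqref{eq-scaling-coordrep} to split $\mathcal A_X$ into an $X$-part (acting via $A \mapsto \binom{\ell}{2}^{-1} Q\boxdot A$) and an $X_\perp$-part (scalar $\tfrac{2}{\ell}$), then identify the block-diagonal and block-off-diagonal subspaces of $\R^{p\times p}$ as the eigenspaces of the $X$-part. Your presentation via the isometry $\Phi$ is slightly more streamlined, but the underlying decomposition and computations coincide with the paper's.
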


\begin{proof}
Fix any $X_\perp \in \Stiefel{n}{n-p}$ such that $[X\ X_\perp] \in \Stiefel{n}{n}$. Then, for any $\xi, \eta \in \R^{n \times p}$, we have
\begin{align*}
    \langle\,\mathcal A_X (\xi), \eta\,\rangle 
    &\textstyle= \left\langle\, X \left(\binom{\ell}{2}^{-1}\,Q \boxdot (X^\top \xi)\right) + X_\perp (\frac{2}{\ell}\,X_\perp^\top\xi), X (X^\top \eta) + X_\perp(X_\perp^\top \eta)\,\right\rangle\\
    &\textstyle= \left\langle\, \binom{\ell}{2}^{-1}\,Q \boxdot (X^\top \xi), X^\top \eta\,\right\rangle + \left\langle\, \frac{2}{\ell}\,X_\perp^\top\xi, X_\perp^\top\eta\,\right\rangle\\
    &\textstyle= \left\langle\, X^\top \xi, \binom{\ell}{2}^{-1}\,Q \boxdot (X^\top \eta)\,\right\rangle + \left\langle\, X_\perp^\top\xi, \frac{2}{\ell}\,X_\perp^\top\eta\,\right\rangle
    = \langle\,\xi, \mathcal A_X(\eta)\,\rangle,
\end{align*}
i.e., $\mathcal A_X$ is self-adjoint. In addition, the following subspaces
\begin{align*}
    &\left\{\,X \left[\begin{smallmatrix}
    A_{11} & 0 & \cdots & 0\\
    0 & A_{22} & & 0\\
    \vdots & & \ddots & \vdots\\
    0 & 0 & \cdots & A_{\ell \ell}
    \end{smallmatrix}\right] + X_\perp B \,:\, A_{ii} \in \R^{p_i \times p_i}, B \in \R^{(n-p) \times p} \,\right\},\\
    &\left\{\,X \left[\begin{smallmatrix}
    0 & A_{12} & \cdots & A_{1\ell}\\
    A_{21} & 0 & & A_{2\ell}\\
    \vdots & & \ddots & \vdots\\
    A_{\ell1} & A_{\ell2} & \cdots & 0
    \end{smallmatrix}\right] \,:\, A_{ij} \in \R^{p_i \times p_j}\,\right\}
\end{align*}
are the eigenspaces of $\mathcal A_X$ corresponding to the eigenvalues $\frac{2}{\ell}$ and $\binom{\ell}{2}^{-1}$, respectively.
\end{proof}

\begin{lemma} \label{lem-diffscaling}
Let $X, Y \in \Stiefel n p$ be fixed. For any $\xi \in \R^{n \times p}$, we have
\begin{align} \label{eq-diffscaling1}
\textstyle\mathcal A_{Y}^{-1}(\xi) - \mathcal A_{X}^{-1}(\xi) = \frac{\ell(\ell-2)}{2} \left( Y\left[\left(J-I\right) \boxdot (Y^\top \xi)\right] - X\left[\left(J-I\right) \boxdot (X^\top \xi)\right] \right).
\end{align}
\end{lemma}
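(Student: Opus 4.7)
The plan is to reduce the lemma to an explicit formula for $\mathcal{A}_X^{-1}$ acting on any $\xi \in \R^{n \times p}$, and then subtract. Once we have such a formula, the stated identity will follow immediately because the parts that are independent of $X$ (namely the $\frac{\ell}{2}\xi$ piece, as we shall see) will cancel.

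First, I would rewrite $\mathcal{A}_X$ using the expression~\eqref{eq-scaling-coordrep}, observing that the block-matrix $Q = J + (\ell-2)I$ decomposes according to its block-diagonal and block-off-diagonal parts with weights $\ell - 1$ and $1$, respectively. Concretely, for any $M \in \R^{p \times p}$, write $M = I \boxdot M + (J-I)\boxdot M$, so
\begin{align*}
\mathcal{A}_X(\xi) = \tfrac{2}{\ell}\,X\big[I\boxdot(X^\top\xi)\big] + \tfrac{2}{\ell(\ell-1)}\,X\big[(J-I)\boxdot(X^\top\xi)\big] + \tfrac{2}{\ell}\,X_\perp X_\perp^\top\xi.
\end{align*}
From Lemma~\ref{lem-selfadjoint}, these three terms live in pairwise orthogonal invariant subspaces on which $\mathcal{A}_X$ acts as $\frac{2}{\ell}$, $\binom{\ell}{2}^{-1}=\frac{2}{\ell(\ell-1)}$, and $\frac{2}{\ell}$, respectively, so inverting eigenspace by eigenspace is straightforward.

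Next, I would guess and verify the formula
\begin{align*}
\mathcal{A}_X^{-1}(\xi) = \tfrac{\ell}{2}\,\xi + \tfrac{\ell(\ell-2)}{2}\,X\big[(J-I)\boxdot(X^\top\xi)\big].
\end{align*}
The verification $\mathcal{A}_X\circ\mathcal{A}_X^{-1} = \mathrm{id}$ is a direct computation: one uses $XX^\top + X_\perp X_\perp^\top = I$, $X^\top X = I$, $X_\perp^\top X = 0$, and the key identity $Q\boxdot\big((J-I)\boxdot M\big) = (J-I)\boxdot M$ (because off-diagonal entries of $Q$ equal $1$, and the block diagonal has already been zeroed out). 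Collecting coefficients on $X[I\boxdot(X^\top\xi)]$, $X[(J-I)\boxdot(X^\top\xi)]$, and $X_\perp X_\perp^\top\xi$, one checks that each collapses to $1$, using $\frac{1}{\ell-1} + \frac{\ell-2}{\ell-1} = 1$ for the middle term.

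Finally, subtracting the formula for $\mathcal{A}_X^{-1}(\xi)$ from that of $\mathcal{A}_Y^{-1}(\xi)$ cancels the $\frac{\ell}{2}\,\xi$ terms and yields~\eqref{eq-diffscaling1}. The only real obstacle is the bookkeeping in the verification of $\mathcal{A}_X^{-1}$; no topological or geometric argument is needed, just the block-Hadamard algebra together with the orthogonal decomposition $I = XX^\top + X_\perp X_\perp^\top$.
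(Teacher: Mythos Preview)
Your proposal is correct and follows essentially the same approach as the paper: both derive an explicit formula for $\mathcal A_X^{-1}$ via the eigenspace decomposition of Lemma~\ref{lem-selfadjoint} and then subtract. The paper writes the inverse as $\mathcal A_X^{-1}(\xi) = X\bigl(\binom{\ell}{2}\,Q'\boxdot(X^\top\xi)\bigr) + (I-XX^\top)\bigl(\tfrac{\ell}{2}\xi\bigr)$ with $Q' = J - \tfrac{\ell-2}{\ell-1}I$ the entrywise reciprocal of $Q$, and then simplifies using $(\ell-1)Q' - J = (\ell-2)(J-I)$; your formula $\mathcal A_X^{-1}(\xi) = \tfrac{\ell}{2}\xi + \tfrac{\ell(\ell-2)}{2}X\bigl[(J-I)\boxdot(X^\top\xi)\bigr]$ is algebraically equivalent and slightly cleaner in that the $X$-independent piece is already isolated, so the cancellation in the difference is immediate.
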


\begin{proof}
Let $Q'=J-\frac{\ell-2}{\ell-1}I$ denote the entrywise reciprocal of $Q=J+(\ell-2)I$. It can be verified that
\[ \textstyle\mathcal A_{X}^{-1}(\xi) = X\left(\binom{\ell}{2}\,Q' \boxdot (X^\top \xi)\right) - (I-XX^\top)(\frac{\ell}{2}\,\xi) \]
for all $\xi \in \R^{n \times p}$. It follows that
\begin{align*}
\mathcal A_{Y}^{-1}(\xi) - \mathcal A_{X}^{-1}(\xi)
&\textstyle= Y\left(\binom{\ell}{2}\, Q' \boxdot (Y{}^\top \xi)\right) + (I-YY^\top)(\frac{\ell}{2}\,\xi)
-\, X\left(\binom{\ell}{2}\,Q' \boxdot (X^\top \xi)\right) - (I-XX^\top)(\frac{\ell}{2}\,\xi)\\
&\textstyle= Y\left(\binom{\ell}{2}\, Q' \boxdot (Y^\top \xi)\right) - Y(\frac{\ell}{2}\,Y^\top\xi) - X\left(\binom{\ell}{2}\,Q' \boxdot (X^\top \xi)\right) + X(\frac{\ell}{2}\,X^\top\xi)\\
&\textstyle= \frac{\ell}{2}\,\left( Y\left[\left((\ell-1) Q' - J\right) \boxdot (Y^\top \xi)\right] - X\left[\left((\ell-1) Q' - J\right) \boxdot (X^\top \xi)\right] \right)\\
&\textstyle= \frac{\ell(\ell-2)}{2} \left( Y\left[\left(J-I\right) \boxdot (Y^\top \xi)\right] - X\left[\left(J-I\right) \boxdot (X^\top \xi)\right] \right),
\end{align*}
which completes the proof.
\end{proof}

\begin{lemma} \label{lem-almostNonexpansive2}
Consider the setting of Lemma~\ref{lem-almostNonexpansive}. For any $\xi, \zeta \in \Stiefel n p$, we have
\begin{align}
    \textstyle \left|\left(\|\xi\|_{\mathcal A_{X^{+}}^{-1}}^2 - \|\xi\|_{\mathcal A_{X}^{-1}}^2\right) - \left(\|\zeta\|_{\mathcal A_{X^{+}}^{-1}}^2 - \|\zeta\|_{\mathcal A_{X}^{-1}}^2\right)\right| \leq \frac{\ell(\ell-2)}{\sqrt{2}}\, \|X^{+}-X\| \cdot \left\|\xi -\zeta\right\|. \label{eq-almostNonexpansive2}
\end{align}
\end{lemma}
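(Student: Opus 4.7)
The plan is to reduce the claim to an estimate on a single self-adjoint operator, whose block structure is already explicit from the proof of Lemma~\ref{lem-almostNonexpansive}. By Lemma~\ref{lem-diffscaling},
\[
\mathcal A_{X^+}^{-1}-\mathcal A_X^{-1}=\tfrac{\ell(\ell-2)}{2}\,\mathcal E,\qquad \mathcal E(\eta):=X^+\bigl[(J-I)\boxdot((X^+)^\top\eta)\bigr]-X\bigl[(J-I)\boxdot(X^\top\eta)\bigr].
\]
The identity $\langle A\boxdot M,N\rangle=\langle M,A\boxdot N\rangle$ for symmetric $A\in\R^{\ell\times\ell}$ shows that each map $\eta\mapsto Y[(J-I)\boxdot(Y^\top\eta)]$ is self-adjoint on $\R^{n\times p}$, so $\mathcal E$ is self-adjoint. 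Self-adjointness then delivers the polarization identity
\[
\langle\mathcal E(\xi),\xi\rangle-\langle\mathcal E(\zeta),\zeta\rangle=\langle\mathcal E(\xi-\zeta),\,\xi+\zeta\rangle,
\]
which reduces the left-hand side of~\eqref{eq-almostNonexpansive2} to $\tfrac{\ell(\ell-2)}{2}\bigl|\langle\mathcal E(\xi-\zeta),\xi+\zeta\rangle\bigr|$.

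Next I would invoke the explicit block decomposition used en route to~\eqref{eq-diffPsi} in the proof of Lemma~\ref{lem-almostNonexpansive}, namely
\[
\mathcal E(\eta)=\Pi_i\,\eta_{-i}\,I_{-i}^\top+\Pi_j\,\eta_{-j}\,I_{-j}^\top,\qquad \Pi_k:=X_k^+(X_k^+)^\top-X_kX_k^\top,
\]
which reflects the fact that only blocks $i,j$ of $X$ change under the update. In particular $\|X^+-X\|^2=\|X_i^+-X_i\|^2+\|X_j^+-X_j\|^2$, and Lemma~\ref{lem-diffinprojections} combined with the identity $(X_k)_\perp^\top X_k^+=(X_k)_\perp^\top(X_k^+-X_k)$ yields the operator-norm bound $\|\Pi_k\|_{\mathrm{op}}\le\|X_k^+-X_k\|$. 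Block-wise expansion then gives
\[
\langle\mathcal E(\xi-\zeta),\xi+\zeta\rangle=\sum_{k\in\{i,j\}}\langle\Pi_k\,(\xi-\zeta)_{-k},\,(\xi+\zeta)_{-k}\rangle.
\]

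The hard part will be squeezing this bilinear form into the target shape $\|X^+-X\|\cdot\|\xi-\zeta\|$ without carrying along an extra factor of $\|\xi+\zeta\|$, which a naive Cauchy--Schwarz would produce and which would otherwise scale like $\sqrt p$. To remove it, I plan to exploit the symmetric-antisymmetric splitting $\Pi_k=\tfrac12\bigl((X_k^++X_k)(X_k^+-X_k)^\top+(X_k^+-X_k)(X_k^++X_k)^\top\bigr)$: in each of the four resulting inner products, pairing the two ``$-$'' factors against each other via Cauchy--Schwarz produces the desired $\|X_k^+-X_k\|\cdot\|(\xi-\zeta)_{-k}\|$ term, whereas quantities of the form $(X_k^++X_k)^\top(\xi+\zeta)_{-k}$ should be controllable by $p$-independent constants by invoking the Stiefel constraint $\xi^\top\xi=\zeta^\top\zeta=I$ through the vanishing of the off-diagonal blocks of these Gram matrices, rather than the crude bound $\|(\xi+\zeta)_{-k}\|\le 2\sqrt{p-p_k}$. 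A concluding Cauchy--Schwarz across $k\in\{i,j\}$, together with $\|(\xi-\zeta)_{-i}\|^2+\|(\xi-\zeta)_{-j}\|^2\le 2\|\xi-\zeta\|^2$ and the block identity for $\|X^+-X\|^2$, is expected to supply the $1/\sqrt 2$ in the target constant $\ell(\ell-2)/\sqrt 2$. Tracking the ``$+$''-factor cancellations via the Stiefel structure is the delicate step I expect to be the main obstacle.
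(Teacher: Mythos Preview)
Your reduction to $\tfrac{\ell(\ell-2)}{2}\bigl|\langle\mathcal E(\xi-\zeta),\xi+\zeta\rangle\bigr|$ and the block form $\mathcal E(\eta)=\sum_{k\in\{i,j\}}\Pi_k\,\eta_{-k}I_{-k}^\top$ are correct and agree with the paper. The gap is in your plan for the final estimate. The factor you hope to control, $(X_k^++X_k)^\top(\xi+\zeta)_{-k}$, is a $p_k\times(p-p_k)$ matrix of inner products between columns of $X,X^+$ and columns of $\xi,\zeta$. Since $X$ and $\xi,\zeta$ are \emph{unrelated} points of $\Stiefel n p$, the Stiefel constraints $\xi^\top\xi=\zeta^\top\zeta=I$ say nothing about such cross terms (they force $\xi_k^\top\xi_{-k}=0$, not $X_k^\top\xi_{-k}=0$), and the Frobenius norm of this matrix can genuinely be of order $\sqrt{p_k(p-p_k)}$. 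So the symmetric--antisymmetric splitting of $\Pi_k$ does not remove the $\sqrt p$ you are worried about.

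The paper's route differs in how it reads the same inner product. Instead of splitting $\Pi_k$, it rewrites your polarized term as
\[
\langle\Pi_k(\xi-\zeta)_{-k},(\xi+\zeta)_{-k}\rangle=\langle\Pi_k,\ \xi_{-k}\xi_{-k}^\top-\zeta_{-k}\zeta_{-k}^\top\rangle,
\]
and then observes that, because $\xi,\zeta\in\Stiefel n p$, the sub-blocks $\xi_{-k},\zeta_{-k}$ lie in $\Stiefel n{p-p_k}$, so the second factor is \emph{itself} a difference of two orthogonal projections. Lemma~\ref{lem-diffinprojections} can therefore be applied symmetrically to both factors: $\|\Pi_k\|_{\mathsf{op}}\le\|X_k^+-X_k\|$ on one side, and on the other the identity $(I-\zeta_{-k}\zeta_{-k}^\top)\xi_{-k}=(I-\zeta_{-k}\zeta_{-k}^\top)(\xi_{-k}-\zeta_{-k})$ gives $\|(I-\zeta_{-k}\zeta_{-k}^\top)\xi_{-k}\|\le\|\xi_{-k}-\zeta_{-k}\|$. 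This last step is exactly where the Stiefel structure of $\zeta$ is used, and it is what produces $\|\xi-\zeta\|$ without any $\sqrt p$ loss. A Cauchy--Schwarz over $k\in\{i,j\}$, together with $\|\xi_{-i}-\zeta_{-i}\|^2+\|\xi_{-j}-\zeta_{-j}\|^2\le2\|\xi-\zeta\|^2$, then supplies the $1/\sqrt2$.
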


\begin{proof}
Recall that $\Psi_X(\xi) = X\left[\left(J-I\right) \boxdot (X^\top \xi)\right]$. By \eqref{eq-diffscaling1} and \eqref{eq-diffPsi}, we have
\begin{align*}
&\textstyle\left(\|\xi\|_{\mathcal A_{X^{+}}^{-1}}^2 - \|\xi\|_{\mathcal A_{X}^{-1}}^2\right) - \left(\|\zeta\|_{\mathcal A_{X^{+}}^{-1}}^2 - \|\zeta\|_{\mathcal A_{X}^{-1}}^2\right)\\
&\textstyle= \frac{\ell(\ell-2)}{2}\left(\langle\,(\Psi_{X^{+}}-\Psi_X)(\xi), \xi\,\rangle - \langle\,(\Psi_{X^{+}}-\Psi_X)(\zeta), \zeta\,\rangle\right)\\
&\textstyle= \frac{\ell(\ell-2)}{2}\left(\langle\,(X_i^{+}X_i^{+}{}^\top - X_iX_i^\top)\xi_{-i}I_{-i}^\top + (X_j^{+}X_j^{+}{}^\top - X_j X_j^\top)\xi_{-j}I_{-j}^\top, \xi\,\rangle\right.\\
&\qquad\textstyle\qquad\ \left. -\, \langle\,(X_i^{+}X_i^{+}{}^\top - X_iX_i^\top)\zeta_{-i}I_{-i}^\top + (X_j^{+}X_j^{+}{}^\top - X_j X_j^\top)\zeta_{-j}I_{-j}^\top, \zeta\,\rangle\right)\\
&\textstyle= \frac{\ell(\ell-2)}{2}\left(\langle\,X_i^{+}X_i^{+}{}^\top - X_iX_i^\top, \xi_{-i}\xi_{-i}^\top - \zeta_{-i}\zeta_{-i}^\top\,\rangle
+\, \langle\,X_j^{+}X_j^{+}{}^\top - X_jX_j^\top, \xi_{-j}\xi_{-j}^\top - \zeta_{-j}\zeta_{-j}^\top\,\rangle\right)\\
&\textstyle\leq \frac{\ell(\ell-2)}{2} \left(\|X_{i}^{+}X_{i}^{+}{}^\top - X_{i}X_{i}^\top\|_{\mathsf{op}} \cdot \|\xi_{-i}\xi_{-i}^\top - \zeta_{-i}\zeta_{-i}^\top\|
+\, \|X_{j}^{+}X_{j}^{+}{}^\top - X_{j}X_{j}^\top\|_{\mathsf{op}} \cdot \|\xi_{-j}\xi_{-j}^\top - \zeta_{-j}\zeta_{-j}^\top\|\right)\\
&\textstyle= \frac{\ell(\ell-2)}{2} \left(\|(I-X_iX_i^\top)X_i^{+}\|_{\mathsf{op}} \cdot \|(I-\zeta_{-i}\zeta_{-i}^\top)\xi_{-i}\|
+\, \|(I-X_jX_j^\top)X_j^{+}\|_{\mathsf{op}} \cdot \|(I-\zeta_{-j}\zeta_{-j}^\top)\xi_{-j}\|\right)\\
&\textstyle\leq \frac{\ell(\ell-2)}{2} \left(\|X_i^{+}-X_i\|_{\mathsf{op}} \cdot \|\xi_{-i}-\zeta_{-i}\| + \|X_j^{+}-X_j\|_{\mathsf{op}} \cdot \|\xi_{-j}-\zeta_{-j}\|\right)\\ 
&\textstyle\leq \frac{\ell(\ell-2)}{\sqrt{2}}\|X_{ij}^{+}-X_{ij}\| \cdot \|\xi-\zeta\|,
\end{align*}
where the fourth equality follows from Lemma \ref{lem-diffinprojections}. This completes the proof.
\end{proof}

\begin{lemma}[Convergence Lemma; {see~\cite[Lemma 4.1]{zhao2022randomized}}] \label{convlem}
Let $\{Y^k\}\subseteq \R^{n \times p}$ and $\{\mu_k\}\subseteq \R_{+}$ be given sequences. Let $\Phi : \R^{d} \to \R^m$ be a map and $L_\Phi > 0$ be a constant such that $\| \Phi (Y^k) - \Phi(Y^{k+1}) \| \le L_{\Phi} \|Y^k - Y^{k+1}\|$ for all $k\ge 0$. Suppose there exist constants $M, p > 0$ and a vector $\bar\Phi \in \R^m$ such that
\begin{itemize}
\item $\|Y^k - Y^{k+1}\| \leq M\mu_k$ for all $k \ge 0$,
\item $\sum_{k\ge0} \mu_k = \infty$, 
\item $\sum_{k\ge0} \mu_k \|\Phi(Y^k) - \bar\Phi\|^p < \infty$.
\end{itemize}
Then, we have $\lim_{k \to \infty} \| \Phi(Y^k) - \bar{\Phi}\|^p = 0$. 
\end{lemma}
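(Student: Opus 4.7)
The plan is to argue by contradiction: assuming $\|\Phi(Y^k) - \bar\Phi\|^p$ does not tend to $0$, I would construct infinitely many disjoint ``excursions'' of the sequence above a fixed threshold, each of which contributes a uniform positive amount to $\sum_k \mu_k \|\Phi(Y^k) - \bar\Phi\|^p$, contradicting the summability hypothesis. Writing $b_k = \|\Phi(Y^k) - \bar\Phi\|$, the desired conclusion is equivalent to $b_k \to 0$.

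First I would establish $\liminf_k b_k = 0$: otherwise $b_k^p$ is bounded below by a positive constant for all large $k$, and together with $\sum_k \mu_k = \infty$ this would contradict $\sum_k \mu_k b_k^p < \infty$. Suppose now, for contradiction, that $b_k \not\to 0$, so that there exists $\varepsilon > 0$ with $\limsup_k b_k > 2\varepsilon$. Combining $\liminf_k b_k = 0 < \varepsilon$ with $\limsup_k b_k > 2\varepsilon$, I define indices inductively: set $K_1 = \min\{k : b_k \geq 2\varepsilon\}$, and for $j \geq 1$ let $K_j' = \min\{k > K_j : b_k \leq \varepsilon\}$ and $K_{j+1} = \min\{k > K_j' : b_k \geq 2\varepsilon\}$. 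All these indices are finite (respectively by $\limsup_k b_k > 2\varepsilon$ and $\liminf_k b_k = 0$), and the resulting index ranges $[K_j, K_j' - 1]$ are pairwise disjoint. By the minimality in the definition of $K_j'$, one has $b_k > \varepsilon$ throughout $[K_j, K_j' - 1]$.

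The key estimate on each excursion uses the Lipschitz hypothesis on $\Phi$ together with the step-size bound $\|Y^k - Y^{k+1}\| \leq M\mu_k$, which jointly yield $|b_{k+1} - b_k| \leq L_\Phi M \mu_k$. Telescoping gives
\[
\varepsilon \;\leq\; b_{K_j} - b_{K_j'} \;\leq\; \sum_{k=K_j}^{K_j' - 1} |b_{k+1} - b_k| \;\leq\; L_\Phi M \sum_{k=K_j}^{K_j'-1} \mu_k,
\]
so $\sum_{k=K_j}^{K_j'-1} \mu_k \geq \varepsilon/(L_\Phi M)$. Combining this with the uniform lower bound $b_k^p > \varepsilon^p$ on the same range produces $\sum_{k=K_j}^{K_j'-1} \mu_k b_k^p > \varepsilon^{p+1}/(L_\Phi M)$ for each $j$. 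Summing over the infinitely many pairwise disjoint excursion intervals would force $\sum_k \mu_k b_k^p = \infty$, contradicting the third bulleted hypothesis and completing the proof.

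The main subtlety, which I expect to be the trickiest point to set up correctly, is the coordinated choice of the two entry/exit thresholds $2\varepsilon$ and $\varepsilon$: using a single threshold would either break the telescoping ``level drop'' of $\varepsilon$ needed for the lower bound on $\sum_{k=K_j}^{K_j'-1}\mu_k$, or fail to guarantee $b_k > \varepsilon$ uniformly on the entire interval. The two-level construction makes the ``altitude'' estimate (on $b_k^p$) and the ``travel'' estimate (on $\sum \mu_k$) hold simultaneously on every excursion, which is precisely what converts per-excursion lower bounds into the desired contradiction once summed over the infinitely many disjoint intervals.
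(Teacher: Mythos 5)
Your proof is correct. The paper itself does not prove this lemma---it is imported verbatim from \cite[Lemma 4.1]{zhao2022randomized}---and your two-threshold excursion argument (lower-bounding the ``travel'' $\sum_{k=K_j}^{K_j'-1}\mu_k$ via the Lipschitz bound $|b_{k+1}-b_k|\le L_\Phi M\mu_k$ and the ``altitude'' $b_k^p>\varepsilon^p$ on each excursion) is precisely the standard proof of such results, with all the finiteness, disjointness, and telescoping steps handled correctly.
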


\end{document}